\RequirePackage{fix-cm}
\documentclass[smallextended]{svjour3}       
\usepackage{graphicx}
\usepackage{multirow}

\usepackage[utf8]{inputenc} 
\usepackage[T1]{fontenc}    
\usepackage{url}            
\usepackage{booktabs}       
\usepackage{amsfonts}       
\usepackage{nicefrac}       
\usepackage{microtype}      
\usepackage{xcolor}         
\usepackage{algpseudocode}
\usepackage{algorithm}

\usepackage{booktabs}
\usepackage{tabularx}
\usepackage{array}
\usepackage{makecell}
\usepackage{pifont}
\usepackage{xcolor}
\usepackage{hyperref}
\usepackage{enumitem,amsfonts,amssymb,mathtools,bm,pifont,listings,xcolor,threeparttable,xcolor}

\usepackage{algorithm}
\usepackage{algpseudocode}

\usepackage{amsmath}
\usepackage{comment}
\usepackage{subcaption}

\newtheorem{assumption}{Assumption}

\DeclareMathOperator{\qf}{qf}

\DeclareMathOperator{\dist}{dist}

\DeclareMathOperator{\Retr}{Retr}

\def\x{\mathbf{x}}\def\y{\mathbf{y}}

\def\A{\mathbf{A}}\def\B{\mathbf{B}}\def\C{\mathbf{C}}\def\G{\mathbf{G}}\def\H{\mathbf{H}}\def\K{\mathbf{K}}\def\M{\mathbf{M}}\def\N{\mathbf{N}}\def\Q{\mathbf{Q}}\def\R{\mathbf{R}}\def\S{\mathbf{S}}\def\V{\mathbf{V}}\def\W{\mathbf{W}}\def\X{\mathbf{X}}\def\Y{\mathbf{Y}}\def\Z{\mathbf{Z}}

\def\AA{\mathcal{A}}\def\BB{\mathcal{B}}\def\CC{\mathcal{C}}\def\DD{\mathcal{D}}\def\EE{\mathcal{E}}\def\FF{\mathcal{F}}\def\OO{\mathcal{O}}\def\SS{\mathcal{S}}\def\XX{\mathcal{X}}\def\YY{\mathcal{Y}}\def\ZZ{\mathcal{Z}}

\def\EEE{\mathbb{E}}\def\GGG{\mathbb{G}}\def\KKK{\mathbb{K}}\def\LLL{\mathbb{L}}

\def\trans{^\mathsf{T}}\def\Rn{\mathbb{R}}  \def\zero{\mathbf{0}}  
 \def\trans {^\mathsf{T}} \def\fro {\mathsf{F}}

\newcommand{\bfit}[1]{\textit{\textbf{#1}}}
\newcommand{\beq}{\begin{eqnarray}}
\newcommand{\eeq}{\end{eqnarray}}
\newcommand{\la}{\langle}
\newcommand{\ra}{\rangle}
\newcommand{\noi}{\noindent}
\newcommand{\nn}{\nonumber}
\def\noi{\noindent}
\def\nn{\nonumber}
\def\dist{{\rm{dist}}}
\newcommand{\step}[1]{\text{\ding{\numexpr#1+171\relax}}}

\def\Deltas{\boldsymbol{\Delta}}

\def\etaup{\overline{\eta}}

\def\ts{\textstyle}

\begin{document}

\title{Recursive Gradient Retraction Methods for Stochastic Nonconvex Optimization  on Stiefel Manifolds}

\titlerunning{{\sf RGR} for Stochastic Nonconvex Optimization on Stiefel Manifolds} 
\author{Zhijie Yuan         \and
	Lei Sun$^*$ \thanks {$^*$Corresponding author}  \and
	Ganzhao Yuan
}


\institute{Zhijie Yuan \at
	School of Systems Science and Engineering, Sun Yat-sen University, Guangzhou, China   \\
	\email{yuanzhj5@mail2.sysu.edu.cn}           
	\and
	Lei Sun \at School of Systems Science and Engineering, Sun Yat-sen University, Guangzhou, China      \\
	\email{sunlei8@mail.sysu.edu.cn}
	\and
	Ganzhao Yuan \at Faculty of Computer Science and Artificial Intelligence, Shenzhen University of Advanced Technology, Shenzhen, China        \\
	\email{yuanganzhao@suat-sz.edu.cn}
}

\date{}

\maketitle

\begin{abstract}
	Nonconvex optimization on Stiefel manifolds underlies many problems in machine learning and deep learning. However, existing stochastic Riemannian retraction methods often rely on stepsizes that require prior knowledge of the gradient Lipschitz constant, a prescribed iteration horizon, or restrictive batch-size choices. We develop {\sf RGR}, a recursive gradient retraction method for stochastic nonconvex optimization on Stiefel manifolds. Instead of recursively updating gradient estimators in changing tangent spaces, {\sf RGR} constructs them in the Euclidean space and subsequently maps them to tangent space, thereby avoiding vector transport. The method comprises three variants: {\sf RGR-SPIDER} for finite-sum optimization and {\sf RGR-SRM} and {\sf RGR-EMA} for online stochastic optimization. {\sf RGR-SPIDER} is Lipschitz-free and horizon-free, as its adaptive stepsize requires neither prior knowledge of the gradient Lipschitz constant nor a prescribed iteration horizon. {\sf RGR-SRM} and {\sf RGR-EMA} are mini-batch and horizon-free methods whose convergence guarantees do not rely on prescribed large batch sizes and whose parameters can be chosen without prior knowledge of the total iteration budget. We show that {\sf RGR-SPIDER} reaches an $\varepsilon$-critical point using $\mathcal{O}(\varepsilon^{-2})$ iterations and $\mathcal{O}(N+\sqrt{N}\varepsilon^{-2})$ component-gradient evaluations, where $N$ denotes the number of component functions. For the online setting, {\sf RGR-SRM} and {\sf RGR-EMA} achieve iteration complexities of $\tilde{\mathcal{O}}(\varepsilon^{-3})$ and $\tilde{\mathcal{O}}(\varepsilon^{-4})$, respectively. Experiments on principal component analysis, low-rank matrix completion, and orthogonal convolutional neural networks demonstrate faster empirical convergence than methods that construct recursive gradient estimators directly in Riemannian tangent spaces.
\keywords{Stochastic Optimization \and Stiefel Manifolds \and Adaptive Stepsize \and Nonconvex Optimization}
\end{abstract}

\section{Introduction}
\label{intro}

Stochastic nonconvex optimization problems on Stiefel manifolds arise throughout machine learning, deep learning, and scientific computing. The Stiefel manifold $\mathrm{St}(n,r)\triangleq\{\mathbf{X}\in\mathbb{R}^{n\times r}:\mathbf{X}^{\top}\mathbf{X}=\mathbf{I}_r, n \ge r\}$ provides a natural geometric representation of matrices with orthonormal columns, thereby preserving orthogonality throughout the
optimization process~\cite{Absil2008,EdelmanAS98}. In classical subspace learning, principal component analysis (PCA) and its sparse variants seek orthonormal loading vectors that preserve the dominant variation in high-dimensional data~\cite{journee2010generalized,lu2012augmented}. After
whitening, independent component analysis can similarly be reduced to estimating an orthogonal transformation that separates statistically independent latent sources~\cite{comon1994independent,hyvarinen2000independent}. In deep learning, optimization on Stiefel manifolds has been employed to enforce orthogonality in weight matrices and reshaped convolutional filters, thereby controlling signal propagation, reducing feature
redundancy, and improving training stability~\cite{cho2017riemannian,xie2017all,bansal2018can,wang2020orthogonal}. Stiefel manifolds also play a central role in matrix factorization. Factorized formulations of low-rank matrix completion introduce orthonormal subspace factors to represent the row and column spaces of the recovered matrix~\cite{vandereycken2013low,cambier2016robust}, whereas orthogonal nonnegative matrix factorization combines orthogonality with nonnegativity to obtain interpretable, clustering-oriented factors ~\cite{ding2006orthogonal,jiang2022exact}. Fisher discriminant analysis and its kernel extensions impose orthogonality or generalized orthogonality when constructing low-dimensional representations that preserve class-discriminative information; after an appropriate whitening or reparameterization, these constraints take a standard Stiefel form~\cite{fisher1936use,mika1999fisher}. In dictionary learning, spherical and orthogonal-group formulations are used to identify complete dictionaries from sparsely generated observations~\cite{sun2017complete,zhai2020complete}.

Motivated by the above applications, this paper focuses on the following finite-sum and online stochastic optimization problems on the Stiefel manifold:
\beq\label{eq:main}
\min_{\X \in \mathrm{St}(n,r)} f(\X) \triangleq
\begin{cases} 
	\tfrac{1}{N} \sum_{i=1}^N f_i(\X) & (\text{finite-sum}), \\ 
	\EEE_{\xi \sim \mathcal{D}}[f(\X; \xi)] & (\text{online}),
\end{cases}
\eeq
where $\mathrm{St}(n,r)= \{\X \in \mathbb{R}^{n \times r} \mid \X^\top \X = \mathbf{I}_r, n \ge r\}$. In the finite-sum setting, $N$ is the number of samples, while in the online setting, $\xi$ is a random variable, such as a training data sample, drawn from a distribution $\mathcal{D}$. We assume that the objective is differentiable and possibly nonconvex. Moreover, we assume individual smoothness and bounded gradients: for some constants $L>0$ and $C_f>0$, each $f_i(\X)$ and each stochastic function $f(\cdot;\xi)$ are $L$-smooth and their gradients are uniformly bounded by $C_f$ in the Euclidean space.

$\bullet$ \textbf{Recursive Gradient Estimation}. 
In many machine-learning and data-science applications, the objective takes a finite-sum form involving a large number $N$ of component functions. Evaluating the full gradient,
$\nabla f(\x)=\frac{1}{N}\sum_{i=1}^{N}\nabla f_i(\x)$, at every iteration can therefore be computationally prohibitive. Stochastic methods reduce this cost by estimating the gradient from a mini-batch, but the resulting sampling variance may slow convergence and destabilize optimization. Recursive gradient estimation addresses this issue by combining historical gradient estimates with newly sampled stochastic gradients. Variance-reduction ({\sf VR}) methods form a prominent class of such techniques~\cite{NIPS2013TongZ,schmidt2013minimizing}; they construct corrected stochastic gradient estimators with reduced variance, thereby improving convergence while retaining a low per-iteration cost. Representative methods include {\sf SVRG}~\cite{NIPS2013TongZ,li2018simple}, which employs a double-loop structure: each outer loop computes the full gradient at a reference point, whereas each inner iteration uses this snapshot gradient to correct a newly sampled stochastic gradient.
{\sf SARAH} \cite{nguyen2017sarah} introduces a recursive variance reduction scheme that updates the estimator iteratively without requiring full snapshots, achieving optimal rates for nonconvex finite-sum problems and naturally extending to minimax optimization. {\sf SPIDER} \cite{fang2018spider} can be viewed as a normalized variant of {\sf SARAH} designed for nonconvex settings, incorporating stepsize normalization to ensure bounded variance and near-optimal sample complexity. {\sf PAGE} \cite{li2021page} is a probabilistic method that adaptively switches between full-gradient and stochastic updates based on a coin flip. Additionally, momentum-infused {\sf VR} methods have attracted considerable attention due to their ability to accelerate convergence in noisy environments. For instance, {\sf STORM} \cite{cutkosky2019momentum} and its parameter-free variant {\sf STORM+} \cite{levy2021storm+} integrate momentum-based variance reduction by dynamically adjusting stepsizes using historical gradients, attaining optimal convergence rates for stochastic nonconvex optimization without requiring large batch sizes. The exponential moving average ({\sf EMA}) is another widely used recursive estimation technique and forms a central component of stochastic optimization methods such as {\sf Adam}~\cite{KingmaB14} and {\sf RMSProp}~\cite{hinton2012neural}. It provides a memory-efficient mechanism for aggregating historical gradient information while assigning greater weight to recent observations. In this work, we develop {\sf RGR}, a recursive gradient retraction method for solving Problem~\eqref{eq:main}. In the finite-sum setting, we introduce {\sf RGR-SPIDER} by incorporating {\sf SPIDER}. In the online stochastic setting, we design two recursive estimators based on stochastic recursive momentum ({\sf SRM}) and {\sf EMA}, which lead to the variants {\sf RGR-SRM} and {\sf RGR-EMA}, respectively.

$\bullet$ \textbf{Stochastic Riemannian Retraction Methods}. By exploiting the intrinsic geometry on Stiefel manifolds, Problem \eqref{eq:main} can instead be reformulated as an unconstrained optimization problem over Stiefel manifold spaces. Numerous methods are developed for manifold optimization. A classical approach is  Riemannian steepest descent ({\sf R-SD})  \cite{udriste2013convex,pmlr-v49-zhang16b} and Riemannian stochastic gradient descent ({\sf R-SGD}) \cite{bonnabel2013stochastic}, which computes gradients using a mini-batch, substantially reducing the per-iteration computational cost. Nonetheless, {\sf R-SD} costs unstanding computation and {\sf R-SGD} often suffers from slow convergence due to its high variance. Hence, subsequent studies have extended variance reduction techniques from Euclidean spaces to Riemannian manifolds, including  Riemannian stochastic variance reduced gradient ({\sf R-SVRG}) \cite{doi:10.1137/17M1116787,zhang2016riemannian}, Riemannian {\sf SPIDER} ({\sf R-SPIDER}) \cite{zhou2019faster}, Riemannian {\sf PAGE} ({\sf R-PAGE}) \cite{demidovich2024streamlining}, and Riemannian stochastic recursive gradient ({\sf R-SRG}) \cite{kasai2018riemannian}. Recently, a stochastic recursive variance reduced gradient ({\sf SRVRG}) estimator is proposed and extended it to Riemannian manifolds as {\sf R-SRVRG} \cite{liuserena}. They also develop {\sf R-SVRRM} and {\sf R-Hybrid-SGD} by extending the stochastic variance reduced recursive momentum ({\sf SVRRM}) and {\sf Hybrid-SGD} estimators to the Riemannian setting. Moreover, a number of studies have explored adaptive optimization methods on Stiefel manifolds. The Riemannian adaptive stochastic gradient method ({\sf RASA}) \cite{kasai2019riemannian} was proposed for matrix manifolds. {\sf RAMSGrad} \cite{becigneul2018riemannian} and its modified version \cite{sakai2021riemannian} were developed as direct extensions of {\sf AMSGrad} to Riemannian manifolds. Similarly, {\sf cRAMSProp} \cite{roy2018geometry} was introduced and applied to various Riemannian stochastic optimization problems. Tables \ref{tab:finite_sum_methods} and \ref{tab:online_methods} summarize the iteration complexities of existing stochastic Riemannian methods. For finite-sum optimization, most methods use prespecified stepsizes. Although {\sf R-SPIDER}~\cite{zhou2019faster} is adaptive, it constructs estimators in changing tangent spaces via vector transport, and its theoretical stepsize depends on  $\varepsilon$, whereas ours requires neither. For online optimization, many existing methods are not mini-batch methods. {\sf R-SGD}~\cite{hosseini2020alternative}, {\sf R-AMSGrad}~\cite{becigneul2018riemannian}  and {\sf RASA} \cite{kasai2019riemannian} do not employ stochastic gradient estimation, {\sf R-AMSGrad}~\cite{becigneul2018riemannian} is analyzed primarily for geodesically convex objectives, and {\sf  SRM}~\cite{ijcai2021p345} constructs recursive estimators in changing tangent spaces using vector transport. Inspired by the vector-transport-free construction of {\sf S-SVRG}~\cite{jiang2017vector}, {\sf RGR} constructs the recursive estimator in the Euclidean space and then maps it to a tangent direction for the retraction update.  Although {\sf RGR} may not apply to all Riemannian optimization problems, it avoids vector transport between successive tangent spaces, thereby reducing computational overhead and mitigating potential transport-induced approximation errors.

$\bullet$ \textbf{Adaptive Stepsize}. Adaptive stepsize techniques have emerged as powerful tools in stochastic optimization, particularly for addressing the challenges of nonconvex, high-dimensional, and large-scale learning problems. Classic algorithms include {\sf AdaGrad} \cite{duchi2011adaptive,McMahanS10}, {\sf AdaGrad-Norm} \cite{ward2020adagrad}, and {\sf RMSProp} \cite{hinton2012neural}, which adapt the stepsizes according to accumulated historical gradients, thereby favoring sparse updates in high-dimensional spaces. Building upon these methods, {\sf Adam} \cite{KingmaB14} has become one of the most widely used optimizers for training deep neural networks by combining momentum with adaptive learning rates. Numerous extensions have further enhanced the capabilities of {\sf Adam}, including {\sf AdamW} \cite{loshchilov2017decoupled}, {\sf AdaBelief} \cite{zhuang2020adabelief}, {\sf AMSGrad} \cite{reddi2019convergence}, and {\sf Padam} \cite{chen2018closing}. Despite their practical success, many adaptive optimizers are designed for Euclidean spaces and therefore cannot be applied directly to Stiefel manifolds without accounting for their geometry. In this work, we develop an adaptive stepsize strategy for finite-sum optimization on Stiefel manifolds using Riemannian gradient estimates.

$\bullet$ \textbf{Contributions}. Our main contributions are summarized as follows. \textbf{(}\bfit{i}\textbf{)} We propose {\sf RGR}, a recursive gradient retraction method for Problem~\eqref{eq:main} on Stiefel manifolds. {\sf RGR} perform recursive stochastic gradient estimation in the Euclidean space and then map the estimators to tangent directions for retraction updates, avoiding recursive estimation across different tangent spaces, eliminating explicit vector transport, and reducing computational overhead. The method yields three variants: {\sf RGR-SPIDER} for finite-sum problems, and {\sf RGR-SRM} and {\sf RGR-EMA} for online settings (see Section~\ref{sec:prop}). Specifically, {\sf RGR-SPIDER} employs an adaptive {\sf AdaGrad-Norm}-type stepsize, making it both Lipschitz-free and horizon-free: its stepsize does not require prior knowledge of the gradient Lipschitz constant, nor does it depend on a prescribed total number of iterations. For the online stochastic setting, {\sf RGR-SRM} and {\sf RGR-EMA} are mini-batch and horizon-free methods whose convergence guarantees hold for any fixed positive mini-batch size, without requiring it to be sufficiently large, and whose parameter choices do not require prior knowledge of the total number of iterations $T$. In contrast, many existing stochastic Riemannian methods require the batch size to be sufficiently large to establish their iteration complexity guarantees.  \textbf{(}\bfit{ii}\textbf{)} We establish convergence guarantees for all proposed variants under standard assumptions. In particular, to obtain an $\varepsilon$-approximate critical point, {\sf RGR-SPIDER} requires $\mathcal{O}(N+\sqrt{N}/\varepsilon^{2})$ component-gradient complexity, whereas {\sf RGR-SRM} and {\sf RGR-EMA} have iteration complexities of $\tilde{\mathcal{O}}(1/\varepsilon^{3})$ and $\tilde{\mathcal{O}}(1/\varepsilon^{4})$, respectively (see Section~\ref{sec:com}).  \textbf{(}\bfit{iii}\textbf{)} We validate the practical efficiency and robustness of the proposed methods through extensive numerical experiments on benchmark tasks, including PCA, low-rank matrix completion, and orthogonal convolutional neural networks (see Section~\ref{sec:exp}).

\newcolumntype{L}{>{\raggedright\arraybackslash}X}
\newcolumntype{C}{>{\centering\arraybackslash}c}
\newcolumntype{R}{>{\centering\arraybackslash}p{0.46\textwidth}}

\begin{table*}[thb]
	\centering
	
	\label{tab:finite_sum_methods}
	\renewcommand{\arraystretch}{1.5}
	\setlength{\tabcolsep}{5pt}
	\scriptsize
	
	\begin{tabularx}{\textwidth}{@{}|L @{\hspace{-30pt}} C @{\hspace{15pt}}C @{\hspace{10pt}} L|@{}}
		\hline
		Method & Adaptive stepsize & Vector-transport-free & Component-gradient complexity \\
		\hline
		
		{\sf R-SGD} \cite{hosseini2020alternative}
		& \ding{56} & \ding{52}
		& $\OO(\varepsilon^{-4})$ \\

		{\sf R-SVRG} \cite{han2021improved}
		& \ding{56} & \ding{56}
		& $\OO(N+N^{2/3}\varepsilon^{-2})$ \\
		
		{\sf R-AbaSVRG} \cite{han2021improved}
		& \ding{56} & \ding{56}
		& $\OO(N^{2/3}\varepsilon^{-2})$ \\
		
		{\sf R-SRG} \cite{han2021improved}
		& \ding{56} & \ding{56} 
		& $\OO(N+N^{1/2}\varepsilon^{-2})$ \\
		
		{\sf R-AbaSRG} \cite{han2021improved}
		& \ding{56} & \ding{56}
		& $\OO(N^{1/2}\varepsilon^{-2})$ \\
		
		{\sf R-SPIDER} \cite{zhou2019faster}
		& \ding{52} & \ding{56}
		& $\OO(
		\min\{N+N^{1/2}\varepsilon^{-2},\varepsilon^{-3}\}
		)$ \\
		
		{\sf R-PAGE} \cite{demidovich2024streamlining}
		& \ding{56} & \ding{56}
		& $\OO(N+N^{1/2}\varepsilon^{-2})$ \\
		
		{\sf R-SRM} \cite{liuserena}
		& \ding{56} & \ding{56}
		& $\OO(\min\{N,\varepsilon^{-1}\}+\varepsilon^{-3})$ \\
		
		{\sf R-SRVRG} \cite{liuserena}
		& \ding{56} & \ding{56}
		& $\OO(\min\{N^{1/2}\varepsilon^{-2},\varepsilon^{-3}\})$ \\
		
		{\sf R-SVRRM} \cite{liuserena}
		& \ding{56} & \ding{56}
		& $\OO(\min\{N^{1/2}\varepsilon^{-2},\varepsilon^{-3}\})$ \\

		{\sf RGR-SPIDER} \textcolor{red}{[ours]}
		& \ding{52} & \ding{52}
		& $\OO(N+N^{1/2}\varepsilon^{-2})$ \\
		
		\hline
	\end{tabularx}
	
	\caption{Summary of stochastic algorithms and their complexities for the finite-sum setting of Problem~\eqref{eq:main}. \textbf{Note:} The notation $\OO(\cdot)$ suppresses constant factors in the iteration complexity required to obtain an $\varepsilon$-approximate critical point.}\label{tab:finite_sum_methods}
\end{table*}

\begin{table*}[thb]
	\centering
	
	\renewcommand{\arraystretch}{1.5}
	\setlength{\tabcolsep}{5pt}
	\scriptsize
	
	\begin{tabularx}{\textwidth}{@{}|L @{\hspace{-10pt}} C @{\hspace{20pt}}C @{\hspace{20pt}} C @{\hspace{30pt}}L|@{}}
		\hline
		Method & Mini-batch & Vector-transport-free & SGE & Iteration complexity \\
		\hline
		
		{\sf R-SGD} \cite{hosseini2020alternative}
		& \ding{52} & \ding{52} & \ding{56}
		& $\OO(\varepsilon^{-4})$ \\
		
		{\sf RAMSGrad} \cite{becigneul2018riemannian}
		& \ding{52}& \ding{52} & \ding{56}
		& $\OO^*(\varepsilon^{-2})$ \\
		
		{\sf RASA} \cite{kasai2019riemannian}
		& \ding{52} & \ding{52} & \ding{56}
		& $\tilde{\OO}(\varepsilon^{-4})$ \\

		{\sf R-SVRG} \cite{han2021improved}
		& \ding{56} & \ding{56} & \ding{52}
		& $\OO(\varepsilon^{-10/3})$ \\
		
		{\sf R-AbaSVRG} \cite{han2021improved}
		& \ding{56} & \ding{56} & \ding{52}
		& $\OO(\varepsilon^{-10/3})$ \\
		
		{\sf R-SRG} \cite{han2021improved}
		& \ding{56} & \ding{56} & \ding{52}
		& $\OO(\varepsilon^{-3})$ \\
		
		{\sf R-AbaSRG} \cite{han2021improved}
		& \ding{56} & \ding{56} & \ding{52}
		& $\OO(\varepsilon^{-3})$ \\
		
		{\sf R-SPIDER} \cite{zhou2019faster}
		& \ding{56} & \ding{56} & \ding{52}
		& $\OO(\varepsilon^{-3})$ \\
		
		{\sf R-PAGE} \cite{demidovich2024streamlining}
		& \ding{56} & \ding{56} & \ding{52}
		& $\OO(\varepsilon^{-3})$ \\
		
		{\sf RSRM} \cite{ijcai2021p345}
		& \ding{52} & \ding{56} & \ding{52}
		& $\tilde{\OO}(\varepsilon^{-3})$ \\
		
		{\sf R-SRM} \cite{liuserena}
		& \ding{56} & \ding{56} & \ding{52}
		& $\OO(\varepsilon^{-3})$ \\
		
		{\sf R-SRVRG} \cite{liuserena}
		& \ding{56} & \ding{56} & \ding{52}
		& $\OO(\varepsilon^{-3})$ \\
		
		{\sf R-SVRRM} \cite{liuserena}
		& \ding{56} & \ding{56} & \ding{52}
		& $\OO(\varepsilon^{-3})$ \\

		{\sf RGR-SRM} \textcolor{red}{[ours]}
		& \ding{52} & \ding{52} & \ding{52}
		& $\tilde{\OO}(\varepsilon^{-3})$ \\
		
		{\sf RGR-EMA} \textcolor{red}{[ours]}
		& \ding{52} & \ding{52} & \ding{52}
		& $\tilde{\OO}(\varepsilon^{-4})$ \\
		
		\hline
	\end{tabularx}
	
	\caption{Summary of stochastic algorithms and their complexities for the online setting of Problem~\eqref{eq:main}.
	 \textbf{Note:} SGE denotes stochastic gradient estimation. The notation $\OO(\cdot)$ suppresses constant factors, whereas $\tilde{\OO}(\cdot)$ additionally hides polylogarithmic factors. The notation $\OO^*(\cdot)$ indicates a complexity guarantee established for convex problems.} \label{tab:online_methods}
\end{table*}

\section{Applications}\label{sec:app}

This section illustrates three representative applications of stochastic optimization on Stiefel manifolds: principal component analysis, low-rank matrix completion, and orthogonally constrained convolutional neural networks. These applications arise in machine learning and deep learning and cover both finite-sum and stochastic settings.

$\bullet$ \textbf{Principal Component Analysis (PCA).}
PCA is a fundamental technique for dimensionality reduction and feature
extraction. Its sparse and structured variants are also widely studied in
machine learning and data analysis
\cite{journee2010generalized,lu2012augmented}.
Let $\A\in\mathbb{R}^{N\times n}$ be a centered data matrix, where each
row $\A_i\in\mathbb{R}^{1\times n}$ represents one sample. PCA seeks an
$r$-dimensional subspace that minimizes the reconstruction error, or
equivalently, maximizes the variance explained by the projected data.
The PCA problem can then be formulated as the following finite-sum optimization problem:

\begin{equation}
	\min_{\X \in\mathrm{St}(n,r)} 
	\tfrac{1}{2N}\ts 	\sum_{i=1}^{N}
	\left\|
	\A_i-\A_i\X\X^\top
	\right\|_{\fro}^2. \nn
\end{equation}

\noi For a mini-batch $\xi\subseteq\{1,\ldots,N\}$ sampled uniformly at random, the stochastic PCA problem is formulated as follows:
\begin{equation}
	\min_{\X\in\mathrm{St}(n,r)}
	\EEE_{\xi}\left[
	\tfrac{1}{2|\xi|}
	\ts\sum_{i\in\xi}
	\left\|
	\A_i-\A_i\X\X^\top
	\right\|_{\fro}^2
	\right]. \nn
\end{equation}

$\bullet$ \textbf{Low-Rank Matrix Completion (LRMC).}
LRMC aims to recover a low-rank matrix from a subset of its observed
entries and has applications in recommender systems, computer vision,
and sensor-network reconstruction
\cite{cambier2016robust,vandereycken2013low}.
Let $\A\in\mathbb{R}^{N\times n}$ be a partially observed matrix, and
let $\Omega$ denote the set of observed indices. Define
$\mathcal{P}_{\Omega}$ by $[\mathcal{P}_{\Omega}(\Z)]_{ij} = \begin{cases}
	\Z_{ij}, & (i,j)\in\Omega,\\
	0,       & (i,j)\notin\Omega.
\end{cases}
$ Any matrix of rank at most $r$ can be represented as $\Y\X^\top$, where
$\Y\in\mathbb{R}^{N\times r}$ and
$\X\in\mathrm{St}(n,r)$. A normalized factorized LRMC model is therefore $\min_{\substack{\X\in\mathrm{St}(n,r),
		\Y}}
\frac{1}{2N}
\left\|
\mathcal{P}_{\Omega}
\bigl(\A-\Y\X^\top\bigr)
\right\|_{\fro}^2.$ When $\Y$ is fixed, above problem reduces to a finite-sum
optimization problem over the variable $\X$ on the Stiefel manifold:
\begin{equation}\label{eq:app:lrmc:finite}
	\min_{\X\in\mathrm{St}(n,r)} \tfrac{1}{2N} 
	\ts \sum_{i=1}^{N} \left\| \mathcal{P}_{\Omega_i} \bigl(\A_i-\Y_i\X^\top\bigr) \right\|_{\fro}^2, \nn
\end{equation}
where $\Omega_i=\{j:(i,j)\in\Omega\}$ and $\Y_i$ denotes the $i$-th row
of $\Y$. Uniformly sampling a mini-batch $\xi$ of rows gives the equivalent stochastic representation
\begin{equation}\label{eq:app:lrmc:stoc}
	\min_{\X\in\mathrm{St}(n,r)}
	\EEE_{\xi}\left[
	\tfrac{1}{2|\xi|}
	\ts \sum_{i\in\xi}
	\left\|
	\mathcal{P}_{\Omega_i}
	\bigl(\A_i-\Y_i\X^\top\bigr)
	\right\|_{\fro}^2
	\right]. \nn
\end{equation}
Thus, an alternating LRMC method can update $\X$ using a stochastic Stiefel-manifold algorithm while updating $\Y$ separately.

$\bullet$ \textbf{Orthogonal Convolutional Neural Networks (OCNNs).}
Orthogonality constraints and regularizers have been used in convolutional neural networks to reduce correlations among filters and improve the stability of forward and backward propagation \cite{wang2020orthogonal,xie2017all}. Consider a convolutional kernel with spatial dimensions $k_h\times k_w$,
$c_{\mathrm{in}}$ input channels, and $c_{\mathrm{out}}$ output channels.
Let $ d\triangleq k_hk_wc_{\mathrm{in}},
\ p\triangleq c_{\mathrm{out}},$ and let $\W\in\mathbb{R}^{d\times p}$ denote the reshaped kernel matrix.
To obtain a standard column-orthogonal representation, we define $\X\triangleq
\begin{cases}
	\W, & d\geq p,\\
	\W^\top, & d<p,
\end{cases}
\
n\triangleq\max\{d,p\},
\
r\triangleq\min\{d,p\}.$ Then $\X\in\mathbb{R}^{n\times r}$ with $n\geq r$, and orthogonality can be imposed through $\X^\top\X=\mathbf{I}_r,$ so that $\X\in\mathrm{St}(n,r)$. Let $\{(\A_i,\y_i)\}_{i=1}^{N}$ denote the training dataset. When all other network parameters are held fixed, the optimization of this orthogonally constrained convolutional layer can be formulated as
\begin{equation}\label{eq:app:ocnn}
	\min_{\X\in\mathrm{St}(n,r)}
	\EEE_{\xi}\left[
	\ts\frac{1}{|\xi|} \ts\sum_{i\in\xi}
	\ell(\X;\A_i,\y_i)
	\right],\nn
\end{equation}
where $\ell$ denotes the per-sample loss, such as the cross-entropy loss,
and $\xi\subseteq\{1,\ldots,N\}$ is a uniformly sampled mini-batch.

\section{The Proposed Methods}\label{sec:prop}

In this section, we introduce {\sf RGR}, a recursive gradient retraction method for solving Problem~\eqref{eq:main}. Its central idea is to construct recursive stochastic gradient estimators in the Euclidean space and subsequently map them to tangent directions for retraction updates on the Stiefel manifold. Because historical and current gradient information is combined in the same Euclidean space, this construction avoids the additional cost of vector transport between successive tangent spaces. The {\sf RGR} method comprises three variants: {\sf RGR-SPIDER} for finite-sum optimization and {\sf RGR-SRM} and {\sf RGR-EMA} for stochastic optimization. {\sf RGR-SPIDER} is a Lipschitz-free and horizon-free method that combines the {\sf SPIDER} gradient estimator with an adaptive {\sf AdaGrad-Norm}-type stepsize rule, requiring neither prior knowledge of the gradient Lipschitz constant nor a prescribed iteration horizon. Built on {\sf SRM} and {\sf EMA} estimators, respectively, {\sf RGR-SRM} and {\sf RGR-EMA} are mini-batch and horizon-free methods, employing open-loop diminishing stepsize schedules.

\subsection{Geometric Preliminaries}

For any $\X\in\mathrm{St}(n,r)$, the tangent space of the Stiefel manifold
at $\X$ is given by $T_{\X}\mathrm{St}(n,r)
=
\left\{
\Deltas\in\mathbb{R}^{n\times r}
:
\X^\top\Deltas+\Deltas^\top\X=\zero_{r\times r}
\right\}.$ It provides a first-order linear approximation of
$\mathrm{St}(n,r)$ around $\X$. Although
$\Deltas\in T_{\X}\mathrm{St}(n,r)$ specifies a feasible first-order
direction, the Euclidean displacement $\X+\Deltas$ generally does not
belong to $\mathrm{St}(n,r)$. A retraction maps such a tangent displacement
onto the manifold while preserving its first-order behavior.  We next recall the definition of a retraction and present two auxiliary lemmas used in the subsequent analysis.

\begin{definition}[{\cite[Riemannian retraction]{Absil2008}}]
	The tangent bundle of the Stiefel manifold is $	T\mathrm{St}(n,r)
	\triangleq
	\left\{
	(\X,\Deltas):
	\X\in\mathrm{St}(n,r),\
	\Deltas\in T_{\X}\mathrm{St}(n,r)
	\right\}.$ A retraction is a smooth mapping $	\Retr:T\mathrm{St}(n,r)\rightarrow\mathrm{St}(n,r)$
	whose restriction at any $\X\in\mathrm{St}(n,r)$,
	$
	\Retr_{\X}:
	T_{\X}\mathrm{St}(n,r)
	\rightarrow
	\mathrm{St}(n,r),
	$
	satisfies
	$
	\Retr_{\X}(\zero_{n\times r})=\X,
	$  and $\lim_{ T_{\X}\mathrm{St}(n,r) \ni \Deltas \rightarrow \zero_{n\times r}} \tfrac{ \| \Retr_{\X}(\Deltas) - \X - \Deltas  \|_{\fro}}{\|\Deltas\|_{\fro}} = 0$.
\end{definition}

\begin{lemma} [{\cite[Properties of the retraction]{boumal2019global}}]\label{lemma:M12}
	Suppose that $\Retr$ is a smooth retraction defined on the tangent bundle of the Stiefel manifold. Then there exist constants $\dot{k},\ddot{k}>0$, independent of $\X$ and $\Deltas$, such that, for every $\X\in\mathrm{St}(n,r)$ and $\Deltas\in T_{\X}\mathrm{St}(n,r)$, we have the following results: \textbf{(}\bfit{i}\textbf{)} $\|\Retr_{\X}(\Deltas)-\X\|_{\fro}\leq \dot{k} \|\Deltas\|_{\fro}$. \textbf{(}\bfit{ii}\textbf{)} $\|\Retr_{\X}(\Deltas) - \X - \Deltas\|_{\fro} \leq \tfrac{1}{2}\ddot{k} \|\Deltas\|_{\fro}^2$.
\end{lemma}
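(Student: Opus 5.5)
The plan is to prove both bounds by compactness, using that $\mathrm{St}(n,r)$ is a compact embedded submanifold of $\mathbb{R}^{n\times r}$ and that $\Retr$ is smooth on the tangent bundle $T\mathrm{St}(n,r)$. This bundle is a closed embedded submanifold of $\mathbb{R}^{n\times r}\times\mathbb{R}^{n\times r}$. I would split the argument into a small-step regime $\|\Deltas\|_{\fro}\le 1$ and a large-step regime $\|\Deltas\|_{\fro}> 1$. In each regime I produce a constant that is uniform in $(\X,\Deltas)$.

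\emph{Small steps.} Consider the set $\mathcal{K}\triangleq\{(\X,\Deltas)\in T\mathrm{St}(n,r):\|\Deltas\|_{\fro}\le 1\}$. It is compact, being closed and bounded. For fixed $(\X,\Deltas)\in\mathcal{K}$, define the curve $c(t)=\Retr_{\X}(t\Deltas)$ for $t\in[0,1]$. The defining properties of a retraction give $c(0)=\X$ and $c'(0)=\mathrm{D}\Retr_{\X}(\zero)[\Deltas]=\Deltas$; the latter follows from the first-order condition in the definition. Taylor's theorem with integral remainder then gives
\begin{equation*}
\Retr_{\X}(\Deltas)-\X=\int_0^1 \mathrm{D}\Retr_{\X}(t\Deltas)[\Deltas]\,dt,
\end{equation*}
and
\begin{equation*}
\Retr_{\X}(\Deltas)-\X-\Deltas=\int_0^1(1-t)\,\mathrm{D}^2\Retr_{\X}(t\Deltas)[\Deltas,\Deltas]\,dt.
\end{equation*}
The operator norms of $\mathrm{D}\Retr_{\X}(\cdot)$ and $\mathrm{D}^2\Retr_{\X}(\cdot)$ are continuous functions on $\mathcal{K}$, by smoothness of $\Retr$ on the bundle. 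Hence they are bounded by constants $k_1$ and $k_2$. This yields $\|\Retr_{\X}(\Deltas)-\X\|_{\fro}\le k_1\|\Deltas\|_{\fro}$ and $\|\Retr_{\X}(\Deltas)-\X-\Deltas\|_{\fro}\le \tfrac12 k_2\|\Deltas\|_{\fro}^2$ on $\mathcal{K}$. For the derivatives along the fiber direction, I would argue in a local trivialization of the bundle, or extend $\Retr$ smoothly to a neighborhood in the ambient space. A finite cover of the compact set $\mathcal{K}$ then keeps the constants uniform.

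\emph{Large steps.} Here compactness of the manifold does the work. Both $\Retr_{\X}(\Deltas)$ and $\X$ lie in $\mathrm{St}(n,r)$, so each has Frobenius norm $\sqrt r$. Therefore $\|\Retr_{\X}(\Deltas)-\X\|_{\fro}\le 2\sqrt r< 2\sqrt r\,\|\Deltas\|_{\fro}$. Similarly, $\|\Retr_{\X}(\Deltas)-\X-\Deltas\|_{\fro}\le 2\sqrt r+\|\Deltas\|_{\fro}\le(2\sqrt r+1)\|\Deltas\|_{\fro}^2$. Taking $\dot k=\max\{k_1,2\sqrt r\}$ and $\ddot k=\max\{k_2,2(2\sqrt r+1)\}$ gives both claims for all tangent vectors.

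The main obstacle I expect is the justification in the small-step regime. One must make rigorous that the second derivative of $t\mapsto\Retr_{\X}(t\Deltas)$ is bounded uniformly in both the base point and the direction. This needs smoothness jointly in $(\X,\Deltas)$, not merely for each fixed $\X$. I would handle it by viewing $(\X,\Deltas,t)\mapsto \Retr_{\X}(t\Deltas)$ as a smooth map on the compact set $\mathcal{K}\times[0,1]$, and bounding $\|\tfrac{d^2}{dt^2}\Retr_{\X}(t\Deltas)\|_{\fro}/\|\Deltas\|_{\fro}^2$ via homogeneity in $\Deltas$. For concrete retractions such as the QR or polar retraction, one could alternatively obtain explicit constants; the compactness argument suffices in general.
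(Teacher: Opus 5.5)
Your proposal is correct and is essentially the standard argument. The paper gives no proof of its own and defers to Boumal--Absil--Cartis, where the constants come from the same split: derivatives of $t\mapsto\Retr_{\X}(t\Deltas)$ are bounded on the compact set $\{(\X,\Deltas)\in T\mathrm{St}(n,r):\|\Deltas\|_{\fro}\le 1\}$, and the bounded diameter of $\mathrm{St}(n,r)$ handles $\|\Deltas\|_{\fro}>1$. Your use of homogeneity to get uniform small-step constants (unit tangent directions times $[0,1]$) is the right way to make the joint-smoothness step rigorous.
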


\begin{lemma}[{\cite[Lemma~2.12]{yuan2024OADMM}}]\label{lemma:GD:bound}
	For any $\G\in\Rn^{n\times r}$, and $\X\in\mathrm{St}(n,r)$, we define $\GGG \triangleq \G - \X\G\trans\X$. It follows that: $2 \cdot \la \G,\GGG\ra \geq \|\GGG\|_{\fro}^2$.
\end{lemma}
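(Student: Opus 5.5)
We must show $2\langle \G,\GGG\rangle\ge\|\GGG\|_{\fro}^2$ with $\GGG=\G-\X\G\trans\X$ and $\X\trans\X=\mathbf{I}_r$. The plan is to expand both sides, express everything through traces involving $\X$, and reduce the inequality to a sum of manifestly nonnegative terms. For the reduction we introduce $\mathbf{P}\triangleq\X\X\trans$, the orthogonal projector onto the range of $\X$, and set $\mathbf{M}\triangleq\X\trans\G\in\Rn^{r\times r}$.

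First compute $\langle \G,\GGG\rangle=\|\G\|_{\fro}^2-\langle \G,\X\G\trans\X\rangle$. Using the cyclic property of the trace, $\langle \G,\X\G\trans\X\rangle=\mathrm{tr}(\G\trans\X\G\trans\X)=\mathrm{tr}(\mathbf{M}\trans\mathbf{M}\trans)=\mathrm{tr}(\mathbf{M}\mathbf{M})=\langle \mathbf{M}\trans,\mathbf{M}\rangle$. Next expand
\[
\|\GGG\|_{\fro}^2=\|\G\|_{\fro}^2-2\langle \G,\X\G\trans\X\rangle+\|\X\G\trans\X\|_{\fro}^2 .
\]
Since $\X\trans\X=\mathbf{I}_r$, we have $\|\X\G\trans\X\|_{\fro}^2=\|\G\trans\X\|_{\fro}^2=\|\mathbf{M}\|_{\fro}^2$. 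Substituting, the target inequality $2\langle\G,\GGG\rangle-\|\GGG\|_{\fro}^2\ge0$ becomes
\[
\|\G\|_{\fro}^2-\|\mathbf{M}\|_{\fro}^2\ \ge\ 0 .
\]
The cross terms cancel exactly, because the $-2\langle\G,\X\G\trans\X\rangle$ appears on both sides.

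It remains to verify $\|\X\trans\G\|_{\fro}^2\le\|\G\|_{\fro}^2$. This holds because $\|\X\trans\G\|_{\fro}^2=\mathrm{tr}(\G\trans\mathbf{P}\G)=\|\mathbf{P}\G\|_{\fro}^2$. Since $\mathbf{P}$ is an orthogonal projector, $\|\G\|_{\fro}^2=\|\mathbf{P}\G\|_{\fro}^2+\|(\mathbf{I}-\mathbf{P})\G\|_{\fro}^2$, which gives the claim. In fact, the proof yields the identity $2\langle\G,\GGG\rangle-\|\GGG\|_{\fro}^2=\|(\mathbf{I}-\X\X\trans)\G\|_{\fro}^2$.

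There is no genuine obstacle here; the argument is just bookkeeping. The only point needing care is the trace manipulation in the cross term, where $\mathrm{tr}(\G\trans\X\G\trans\X)$ must be written in the same form in both expansions so that the cancellation is exact. The inequality is in general strict (the gap is the normal-complement energy), so no attempt should be made to bound the skew part $\mathbf{M}-\mathbf{M}\trans$; it drops out.
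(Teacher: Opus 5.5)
Your proof is correct. The paper gives no proof of this lemma; it imports it from \cite[Lemma~2.12]{yuan2024OADMM}. Your expansion is therefore a self-contained verification, not a competing route.

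The steps are sound:
\begin{itemize}
\item The cross term $-2\langle\G,\X\G\trans\X\rangle$ appears identically in $2\langle\G,\GGG\rangle$ and in $\|\GGG\|_{\fro}^2$, so it cancels. Rewriting it as $\mathrm{tr}(\mathbf{M}\mathbf{M})$ is harmless but unnecessary.
\item $\|\X\G\trans\X\|_{\fro}^2=\|\X\trans\G\|_{\fro}^2$ follows from $\X\trans\X=\mathbf{I}_r$.
\item The Pythagorean split with $\mathbf{P}=\X\X\trans$ then gives the exact identity
\[
2\langle\G,\GGG\rangle-\|\GGG\|_{\fro}^2=\|(\mathbf{I}_n-\X\X\trans)\G\|_{\fro}^2 .
\]
\end{itemize}

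Beyond the bare inequality, the identity characterizes equality. Equality holds whenever $\G=\X\mathbf{A}$ for some $\mathbf{A}\in\Rn^{r\times r}$, hence always when $r=n$; in that case $\GGG=\X(\mathbf{A}-\mathbf{A}\trans)$. For $r\ge2$, taking $\mathbf{A}$ nonsymmetric gives $\GGG\neq\zero$ with equality, so the factor $2$ cannot be improved in general. This is exactly the factor behind the $-\tfrac{\eta_t}{2}\|\Q_t\|_{\fro}^2$ term in the descent estimates of Lemmas~\ref{prop:VR} and~\ref{prop:C}.

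For $r=1$ the bound is slack by a factor of two. There $\GGG=(\mathbf{I}_n-\X\X\trans)\G$, so $\langle\G,\GGG\rangle=\|\GGG\|_{\fro}^2$.
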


\begin{remark}\rm{}
	Several standard retractions are available on the Stiefel manifold
	\cite{Absil2008,absil2012projection}. Two commonly used
	examples are the polar-decomposition retraction
	$ 
	\Retr_{\X}^{\mathrm{polar}}(\Deltas)
	=
	(\X+\Deltas)
	\left(
	\mathbf{I}_r+\Deltas^\top\Deltas
	\right)^{-1/2},
	$ 
	and the QR-decomposition retraction
	$ 
	\Retr_{\X}^{\mathrm{QR}}(\Deltas)
	=
	\qf(\X+\Deltas),
	$ 
	where $\qf(\X+\Deltas)$ denotes the $Q$ factor of the thin QR
	decomposition of $\X+\Deltas$. The diagonal entries of the corresponding
	$R$ factor are chosen to be positive to ensure a locally unique and
	smooth definition.
\end{remark}

\subsection{{\sf RGR-SPIDER} Method}

We first employ the {\sf SPIDER} estimator~\cite{fang2018spider} to construct a stochastic estimator $\G_t$ of the full gradient $\nabla f(\X_t)$ in the Euclidean space. At $t$-th iteration, a mini-batch $\xi_t\subseteq\{1,\ldots,N\}$ of fixed size $b=|\xi_t|$ is sampled uniformly at random. The corresponding mini-batch gradient evaluated at $\X$ is defined as
$
\nabla f(\X;\xi_t)
\triangleq
\frac{1}{b}\sum_{i\in\xi_t}\nabla f_i(\X).
$
The {\sf SPIDER}  estimator is then updated recursively according to
\begin{equation}\label{eq:spider}
	\G_t=
	\begin{cases}
		\nabla f(\X_t),
		& t\equiv0\pmod q,\\[1mm]
		\G_{t-1}+\tilde{\Deltas}_t,
		& \text{otherwise},
	\end{cases} \nn
\end{equation}
where $q\geq1$ is an update frequency parameter and
$
\tilde{\Deltas}_t
\triangleq
\nabla f(\X_t;\xi_t)
-
\nabla f(\X_{t-1};\xi_t)
$
is the same-sample gradient correction term. The correction term controls variance accumulation while retaining a low per-iteration cost. {\sf SPIDER}  computes the mini-batch gradients $\nabla f(\X_t; \xi_t)$ and $\nabla f(\X_{t-1}; \xi_t)$ in most cases while computing the exact full gradient $\nabla f(\X_t)$ every $q$ iterations to prevent error accumulation.

However, the Euclidean estimator $\G_t$ does not generally belong to
$T_{\X_t}\mathrm{St}(n,r)$. Under the canonical metric of the Stiefel manifold, we
therefore construct the tangent gradient estimator
\begin{equation}\label{eq:Qt}
	\Q_t
	\triangleq
	\G_t-\X_t\G_t^\top\X_t
	\in T_{\X_t}\mathrm{St}(n,r). \nn
\end{equation}

\noi Inspired by {\sf AdaGrad-Norm}~\cite{ward2020adagrad} and
{\sf ADA-SPIDER}~\cite{kavis2022adaptive}, {\sf RGR-SPIDER} employs an
adaptive stepsize based on the cumulative squared norms of the tangent
gradient estimators. Given an initial stepsize parameter $\etaup>0$, we set
\begin{equation}
	\eta_0=\etaup,
	\qquad
	\eta_{t+1}
	=
	\tfrac{1}{
		\sqrt{\eta_{t}^{-2}+\|\Q_{t}\|_{\fro}^{2}}
	}, 
	\quad t\geq1. \nn
\end{equation}
Let $\{\mathcal{H}_t\}_{t\geq0}$ denote the natural filtration
generated by the initial iterate and the sampled mini-batches.
The stepsize $\eta_t$ used in the $t$-th update is recursively
determined by $\eta_{t-1}$ and the lagged quantity
$\|\Q_{t-1}\|_{\fro}^2$. Therefore, $\eta_t$ is
$\mathcal{H}_{t-1}$-measurable. Thus, $\eta_t$ is determined before the
current mini-batch $\xi_t$ is sampled and is predictable with respect
to the current stochastic gradient. For $t \geq 1$, this update rule can be equivalently expressed in a cumulative form as:
$
\eta_t
=
\left(
\frac{1}{\etaup^2}
+
\sum_{i=0}^{t-1}\|\Q_i\|_{\fro}^{2}
\right)^{-1/2}.
$
The iterate is then updated through the retraction $\X_{t+1}=\Retr_{\X_t}(-\eta_t\Q_t),$ which ensures that $\X_{t+1}\in\mathrm{St}(n,r)$.

\begin{remark}\rm{}
\textbf{(}\bfit{i}\textbf{)} To the best of our knowledge, {\sf RGR-SPIDER} is the first method to integrate an {\sf AdaGrad-Norm}-type stepsize rule, and the Euclidean-space {\sf SPIDER} estimator for the finite-sum setting of Problem~\eqref{eq:main} on the Stiefel manifold. Existing Riemannian adaptive methods do not combine their adaptive stepsize rules with recursive variance reduction \cite{becigneul2018riemannian,bento2025riemannian,kasai2019riemannian}. {\sf S-SVRG}~\cite{jiang2017vector} construct a variance-reduced estimator in the ambient Euclidean space, but it uses an {\sf SVRG} estimator and a Barzilai--Borwein stepsize rather than an {\sf AdaGrad-Norm}-type rule.    \textbf{(}\bfit{ii}\textbf{)}  Although the convergence analysis assumes Lipschitz continuity of the gradient, the implementation of {\sf RGR-SPIDER} does not require the corresponding Lipschitz constant as an input. In this algorithmic sense, the method is Lipschitz-free. Moreover, it is also horizon-free because its stepsize rule does not depend on the total number of iterations.
\end{remark}

\subsection{{\sf RGR-SRM} Method} \label{sec:prop:srm}

For online stochastic optimization, we employ the stochastic recursive momentum ({\sf SRM}) estimator~\cite{cutkosky2019momentum,levy2021storm+} to approximate the full Euclidean gradient $\nabla f(\X_t)$.  Unlike standard momentum methods, {\sf SRM} constructs a recursive estimator by incorporating a gradient correction term to control the accumulation of estimation error. This technique allows the algorithm to achieve better iteration complexity, typically improving from $\mathcal{O}(\epsilon^{-4})$ to $\tilde{\mathcal{O}}(\epsilon^{-3})$. Let $b=|\xi_t|$ and $\tilde{\Deltas}_t \triangleq \nabla f(\X_t; \xi_t) - \nabla f(\X_{t-1}; \xi_t)$, where $\xi_t$ denotes the randomly sampled mini-batch at iteration $t$. The {\sf SRM} estimator is then recursively constructed  in the Euclidean space as:
\begin{align}\label{eq:storm}
	\M_{t}	~& = \alpha_t\nabla f(\X_{t};\xi_t) + (1-\alpha_t)\big(\M_{t-1} + \tilde{\Deltas}_t\big) \nn\\
	~&  = \nabla f(\X_{t};\xi_t) 	+ (1-\alpha_t)\big(\M_{t-1} - \nabla f(\X_{t-1};\xi_t )\big). \nn
\end{align}
Although it incorporates historical gradient information to reduce the variance of the estimator, this comes at the cost of additional computational overhead for updating the historical gradients.

Since $\M_t$ is constructed in the Euclidean space, it does not generally lie in the tangent space at $\X_t$. Under the canonical metric of the Stiefel manifold, we define the tangent gradient estimator  \[
\R_t
\triangleq
\M_t-\X_t\M_t^\top\X_t
\in T_{\X_t}\mathrm{St}(n,r).
\]

\noi We update $\alpha_t$ and the stepsize $\gamma_t$ solely based on the open-loop strategy. The open-loop schedules are developed as:
\[
\alpha_t=t^{-2/3},
\qquad
\gamma_t=\theta\sqrt{\alpha_t}=\theta t^{-1/3},
\qquad \theta>0,
\]
which depend only on the current iteration index. Finally, the next iterate $\X_{t+1}$ is obtained by a Riemannian retraction step: $\X_{t+1} = \Retr_{\X_t}\!\left(-\gamma_t \R_t\right)\in\mathrm{St}(n,r),$ which ensures that the updated iterate $\X_{t+1}$ remains on the Stiefel manifold.

\subsection{{\sf RGR-EMA} Method}

Alternatively, we employ the {\sf EMA} technique to approximate the Euclidean gradient $\nabla f(\X_t)$ evaluated at the current iterate $\X_t$. {\sf EMA} is a commonly used stabilization technique that smooths stochastic gradient noise by assigning exponentially decaying weights to historical estimators. Following the same strategy in Section \ref{sec:prop:srm}, {\sf RGR-EMA} replaces the {\sf SRM} estimator with an {\sf EMA} estimator:
\begin{align}\label{eq:ema}
	\N_{t} = \beta_t \nabla  f(\X_{t};\xi_t) + (1-\beta_t)\N_{t-1},\nn
\end{align}
where $\xi_t$ denotes the randomly sampled mini-batch at iteration $t$. Unlike {\sf SRM}, this estimator only uses the current stochastic gradient and the previous stochastic estimator without requiring the previous stochastic gradient. Its per-iteration gradient-estimation cost is therefore lower.

Analogously, we map $\N_t$ to the tangent space and update the iterate according to
\[
\S_t
\triangleq
\N_t-\X_t\N_t^\top\X_t
\in T_{\X_t}\mathrm{St}(n,r),
\qquad
\X_{t+1}
=
\Retr_{\X_t}(-\mu_t\S_t)\in \mathrm{St}(n,r),
\]
where the open-loop schedules are
\[
\beta_t=t^{-1/2},
\qquad
\mu_t=\theta\beta_t=\theta t^{-1/2},
\qquad \theta>0.
\]
Algorithm~\ref{alg:rgr-spider} summarizes {\sf RGR-SPIDER}, whereas Algorithm~\ref{alg:RGR-SRM-EMA} presents {\sf RGR-SRM} and {\sf RGR-EMA}.

\begin{remark}\normalfont
    \textbf{(}\bfit{i}\textbf{)}
    For stochastic nonconvex optimization on the Stiefel manifold, {\sf RGR-SRM} and {\sf RGR-EMA} construct their recursive gradient estimators entirely in the ambient Euclidean space instead of tangent space, avoiding explicit vector transport between  successive tangent spaces. Moreover, neither method requires the  total iteration horizon $T$ to be specified in advance, making both methods horizon-free. Although a related Euclidean-space  construction was used in the vector-transport-free {\sf S-SVRG} method~\cite{jiang2017vector}, their method is based on the {\sf SVRG} estimator for finite-sum optimization rather than a recursive estimator for the online setting problem. \textbf{(}\bfit{ii}\textbf{)} The {\sf SRM} estimator uses the correction term $\nabla f(\X_t;\xi_t)-\nabla f(\X_{t-1};\xi_t)$ and thus requires gradient evaluations at both $\X_t$ and $\X_{t-1}$. This technique improves the convergence guarantee at the expense of higher computational cost. By contrast, the {\sf EMA} estimator uses a weighted average of the current stochastic gradient $\nabla f(\X_t;\xi_t)$ and the previous estimator $\N_{t-1}$, which reduces the computational burden but leads to a weaker convergence rate.
\end{remark}

\begin{algorithm}[htbp]
	\caption{{\sf RGR-SPIDER}}
	\label{alg:rgr-spider}
	\begin{algorithmic}[1]
		
		\State \textbf{Input:}
		Initial point $\X_0\in\mathrm{St}(n,r)$,
		initial stepsize $\etaup>0$,
		refresh frequency $q\geq1$,
		mini-batch size $b$, and iteration budget $T$.
		
		\State \textbf{Initialization:} Set $\eta_0=\etaup$.
		
		\For{$t=0$ \textbf{to} $T-1$}
		
		\State (S1) Update the Euclidean-space {\sf SPIDER} estimator:
		\If{$t\equiv0\pmod q$}
		\Statex \hspace{3em}
		$\displaystyle \G_t=\nabla f(\X_t).$
		\Else
		\Statex \hspace{3em}
		Draw a mini-batch
		$\xi_t$ of size $b$ and set
		\Statex \hspace{3em}
		$
		\G_t
		=
		\G_{t-1}
		+\nabla f(\X_t;\xi_t)
		-\nabla f(\X_{t-1};\xi_t).
		$
		\EndIf
		
		\State (S2) Map $\G_t$ to the tangent space:
		\Statex \hspace{3em}
		$\displaystyle
		\Q_t
		=
		\G_t-\X_t\G_t^\top\X_t
		\in T_{\X_t}\mathrm{St}(n,r).
		$
		
		\State (S3) Update the iterate via retraction:
		\Statex \hspace{3em}
		$\displaystyle
		\X_{t+1}
		=
		\Retr_{\X_t}(-\eta_t\Q_t).
		$
		
		\State (S4) Update the adaptive stepsize:
		\Statex \hspace{3em}
		$\displaystyle
		\eta_{t+1}
		=
		\left(
		\eta_t^{-2}+\|\Q_t\|_{\fro}^2
		\right)^{-1/2}.
		$
		
		\EndFor
		
		\State Draw $\tau$ uniformly from $\{0,\ldots,T-1\}$.
		\State \textbf{Output:} $\X_\tau$.
		
	\end{algorithmic}
\end{algorithm}

\begin{algorithm}[htbp]
	\caption{{\sf RGR-SRM} and {\sf RGR-EMA}}
	\label{alg:RGR-SRM-EMA}
	\begin{algorithmic}[1]
		
		\State \textbf{Input:}
		Method flag
		$\mathsf{method}\in
		\{{\sf RGR-SRM},{\sf RGR-EMA}\}$,
		initial point $\X_0\in\mathrm{St}(n,r)$,
		stepsize parameter $\theta>0$,
		mini-batch size $b$, and iteration budget $T$.
		
		\State \textbf{Initialization:}
		Set $\X_1=\X_0$,
		$\M_0=\N_0=\zero_{n\times r}$, and
		$\gamma_0=\mu_0=\theta$.
		
		\For{$t=1$ \textbf{to} $T$}
		
		\State (S1) Draw a random mini-batch $\xi_t$ of size $b$.
		
		\If{$\mathsf{method}={\sf RGR-SRM}$}
		
		\State (S2) Set
		$\alpha_t=t^{-2/3}$ and
		$\gamma_t=\theta\sqrt{\alpha_t}
		=\theta t^{-1/3}$.
		
		\State (S3) Update the Euclidean-space {\sf SRM} estimator:
		\Statex \hspace{4.5em}
		$\displaystyle
		\M_t
		=
		\nabla f(\X_t;\xi_t)
		+
		(1-\alpha_t)
		\left(
		\M_{t-1}
		-\nabla f(\X_{t-1};\xi_t)
		\right).
		$
		
		\State (S4) Map $\M_t$ to a tangent direction:
		\Statex \hspace{4.5em}
		$\displaystyle
		\R_t
		=
		\M_t-\X_t\M_t^\top\X_t
		\in T_{\X_t}\mathrm{St}(n,r).
		$
		
		\State (S5) Update the iterate via retraction:
		\Statex \hspace{4.5em}
		$\displaystyle
		\X_{t+1}
		=
		\Retr_{\X_t}(-\gamma_t\R_t).
		$
		
		\Else
		
		\State (S2) Set
		$\beta_t=t^{-1/2}$ and
		$\mu_t=\theta\beta_t
		=\theta t^{-1/2}$.
		
		\State (S3) Update the Euclidean-space {\sf EMA} estimator:
		\Statex \hspace{4.5em}
		$\displaystyle
		\N_t
		=
		\beta_t\nabla f(\X_t;\xi_t)
		+
		(1-\beta_t)\N_{t-1}.
		$
		
		\State (S4) Map $\N_t$ to a tangent direction:
		\Statex \hspace{4.5em}
		$\displaystyle
		\S_t
		=
		\N_t-\X_t\N_t^\top\X_t
		\in T_{\X_t}\mathrm{St}(n,r).
		$
		
		\State (S5) Update the iterate via retraction:
		\Statex \hspace{4.5em}
		$\displaystyle
		\X_{t+1}
		=
		\Retr_{\X_t}(-\mu_t\S_t).
		$
		
		\EndIf
		
		\EndFor
		
		\State Draw $\tau$ uniformly from $\{1,\ldots,T\}$.
		\State \textbf{Output:} $\X_\tau$.
		
	\end{algorithmic}
\end{algorithm}

\section{Convergence Analysis for {\sf RGR} method}\label{sec:com}
This section presents the oracle complexity results for the proposed {\sf RGR-SPIDER}, {\sf RGR-SRM}, and {\sf RGR-EMA}.  The natural filtration is denoted by
$\{\mathcal{H}_t\}_{t\geq0}$, where
$
\mathcal{H}_t
\triangleq
\sigma(\X_0,\xi_0,\ldots,\xi_t).
$ We use $\mathbb{E}[\cdot]$ to denote the total expectation and define $ \EEE_t [\cdot]
\triangleq
\mathbb{E}[\cdot\mid\mathcal{H}_{t-1}]
$ as the conditional expectation given the history up to iteration $t-1$.

\subsection{Initial Theoretical Analysis}

To establish the convergence properties of the proposed algorithms, we begin by introducing the following standard assumptions regarding the objective function and the stochastic gradients.

\begin{assumption}\label{ass:fx}
	The stochastic function $f(\X;\xi)$ is sample-wise $L$-smooth and $C_f$-Lipschitz continuous with respect to $\X$. There exists two constants $L>0$ and $C_f>0$, such that $\|\nabla f(\X;\xi)-\nabla f(\X';\xi)\|_{\fro}
	\leq L\|\X-\X'\|_{\fro}$ and $\|\nabla f(\X;\xi)\|_{\fro}\leq C_f$ almost surely for all $\X,\X',\xi$.  Consequently, Jensen’s inequality gives $\|\nabla f(\X)\|_{\fro}\leq C_f$ for all $\X$.
\end{assumption}

\begin{assumption}\label{ass:sigma}
	The stochastic gradient $\nabla f(\x;\xi)$ is unbiased and has uniformly bounded variance. That is, for all $\X$, $\EEE_{\xi}[\nabla f(\X;\xi)] = \nabla f(\X)$, and $\EEE_{\xi}\!\left[\|\nabla f(\X;\xi)-\nabla f(\X)\|_{\fro}^{2}\right]\leq \sigma^2$.

\end{assumption}
\begin{remark}\rm{}
Assumptions~\ref{ass:fx} and~\ref{ass:sigma} impose standard sample-wise smoothness, unbiasedness, and bounded-variance conditions. Closely related conditions are widely used in analyses of nonconvex stochastic optimization and variance-reduced methods \cite{fang2018spider,li2023convergence,cutkosky2019momentum}. At iteration $t$, suppose that $\X_t$ is $\mathcal H_{t-1}$-measurable and that $\xi_t$ is sampled independently of $\mathcal H_{t-1}$ from the prescribed distribution.
Assumption~\ref{ass:sigma} then implies $\EEE_t[\nabla f(\X_t;\xi_t)]=\EEE_t[\nabla f(\X_t)]$
and $\EEE_t[\|\nabla f(\X_t;\xi_t)-\nabla f(\X_t)\|_{\fro}^2 ]\leq\sigma^2$. Applying the tower property yields
$\EEE[\nabla f(\X_t;\xi_t)]=\EEE[\nabla f(\X_t)]$ and $\EEE[\|\nabla f(\X_t;\xi_t)-\nabla f(\X_t)\|_{\fro}^2 ] \leq\sigma^2$.
\end{remark}

\noi  Under the canonical metric, the Riemannian gradient is $ \operatorname{grad}f(\X) \triangleq \nabla f(\X)-\X\nabla f(\X)^\top\X.$ We next define the critical point criterion used in the convergence analysis.

\begin{definition}[Critical point and $\varepsilon$-critical point]
	\label{def:c:point}
	A point $\dot{\X}\in\mathrm{St}(n,r)$ is a critical point of
	Problem~\eqref{eq:main} if
	$
	\operatorname{grad}f(\dot{\X})
	=
	\zero_{n\times r}.
	$
	A random output $\X_\tau\in\mathrm{St}(n,r)$ is said to be
	$\varepsilon$-critical in expectation if
	$
	\EEE\!\left[
	\|\operatorname{grad}f(\X_\tau)\|_{\fro}
	\right]
	\leq \varepsilon.
	$
\end{definition}

\subsection{Analysis for {\sf RGR-SPIDER}}\label{sec:com:srm}
This subsection establishes the iteration complexity and component-gradient  
complexity guarantees for {\sf RGR-SPIDER}. Since the Stiefel manifold $\mathrm{St}(n,r)$
is compact and $f(\X)$ is continuous, we let $\bar{\X}\in\arg\min_{\X\in\mathrm{St}(n,r)} f(\X)$ denote a global optimal solution. We introduce several key quantities to facilitate our analysis: $\XX_t \triangleq f(\X_t)- f(\bar{\X})$, $\AA_t \triangleq \eta_t\|\Q_t\|_{\fro}^2$, $\BB_t \triangleq \eta_t^2\|\Q_t\|_{\fro}^2$, and $\SS_t \triangleq \|\G_t - \nabla f(\X_t)\|_{\fro}^2$. Our convergence proof proceeds in three main stages: \textbf{(}\bfit{i}\textbf{)} We first establish a one-step bound for the sequence $\{\XX_t\}$. \textbf{(}\bfit{ii}\textbf{)} We derive cumulative bounds for $\AA_t$ and $\BB_t$ in terms of $\tfrac{1}{\eta_T}$ and $\tfrac{1}{\sqrt{\eta_T}}$, respectively. \textbf{(}\bfit{iii}\textbf{)} We prove that $\EEE[\tfrac{1}{\eta_T}]$ and $\EEE[\XX_T]$ are bounded above by two positive constants, respectively, both independent of $T$. \textbf{(}\bfit{iv}\textbf{)} Finally, we derive the iteration complexity and component-gradient  complexity of {\sf RGR-SPIDER}. To describe the periodic full-gradient updates, let $q\ge 1$ be
the update frequency. For every $t\geq0$, define the following indexing scheme: $r_{t}\triangleq \lfloor\tfrac{t}{q}\rfloor+1$, $ \underline{r_t} \triangleq (r_{t} - 1)q$, $ \overline{r_t} \triangleq r_{t} q  - 1$  Thus, $\underline{r}_t\leq t\leq\overline{r}_t$, and
$[\underline{r}_t,\overline{r}_t]$ is the epoch containing iteration $t$. For example, when $q=3$, the indices $t=0,1,2$ belong to the first epoch,
$t=3,4,5$ belong to the second epoch, and so forth. For subsequent use, define $
\hat{\eta} \triangleq \sqrt{1+4q\etaup^2(C_f^2+4(q-1)L^2r)},\ q'\triangleq q\hat{\eta}^{q-1}.$ We first derive the one-step bound for the sequence $\XX_t$.
\begin{lemma}[One-step bound for $\XX_t$, proof in \ref{app:prop:VR}]\label{prop:VR}
	Let $c_1 \triangleq \tfrac{(C_f\ddot{k} + \dot{k}^2L)}{2}$. Then, for every $t\geq0$, we have the following result almost surely: $$\XX_{t+1} - \XX_t  \leq c_1\BB_t - \tfrac{1}{4} \AA_t + \etaup\SS_t.$$
\end{lemma}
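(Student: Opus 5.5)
The plan is to combine Euclidean $L$-smoothness of $f$ with the two retraction bounds of Lemma~\ref{lemma:M12}. Then I would use Lemma~\ref{lemma:GD:bound} to turn the inner product between the estimator and its tangent projection into a negative multiple of $\|\Q_t\|_{\fro}^2$, and finally split off the estimation error $\SS_t$ by Young's inequality. Throughout, write $\Deltas_t\triangleq-\eta_t\Q_t\in T_{\X_t}\mathrm{St}(n,r)$, so that $\X_{t+1}=\Retr_{\X_t}(\Deltas_t)$.

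\textbf{Step 1 (descent inequality).} By Assumption~\ref{ass:fx}, $\nabla f$ is $L$-Lipschitz in the ambient Euclidean space (Jensen applied to the sample-wise bound). Hence
\[
f(\X_{t+1})\le f(\X_t)+\la\nabla f(\X_t),\X_{t+1}-\X_t\ra+\tfrac{L}{2}\|\X_{t+1}-\X_t\|_{\fro}^2 .
\]
I would decompose $\X_{t+1}-\X_t=\Deltas_t+\bigl(\Retr_{\X_t}(\Deltas_t)-\X_t-\Deltas_t\bigr)$. Cauchy--Schwarz with $\|\nabla f(\X_t)\|_{\fro}\le C_f$ and Lemma~\ref{lemma:M12}(ii) bound the inner product with the second piece by $\tfrac{C_f\ddot{k}}{2}\eta_t^2\|\Q_t\|_{\fro}^2$. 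Lemma~\ref{lemma:M12}(i) bounds the quadratic term by $\tfrac{L\dot{k}^2}{2}\eta_t^2\|\Q_t\|_{\fro}^2$. Together these give exactly $c_1\BB_t$, leaving
\[
\XX_{t+1}-\XX_t\le c_1\BB_t-\eta_t\la\nabla f(\X_t),\Q_t\ra .
\]

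\textbf{Step 2 (main term).} I would write $\la\nabla f(\X_t),\Q_t\ra=\la\G_t,\Q_t\ra-\la\G_t-\nabla f(\X_t),\Q_t\ra$. Since $\Q_t=\G_t-\X_t\G_t^\top\X_t$, Lemma~\ref{lemma:GD:bound} gives $\la\G_t,\Q_t\ra\ge\tfrac12\|\Q_t\|_{\fro}^2$. For the error term, Young's inequality gives
\[
\la\G_t-\nabla f(\X_t),\Q_t\ra\le\tfrac14\|\Q_t\|_{\fro}^2+\SS_t .
\]
Therefore $-\eta_t\la\nabla f(\X_t),\Q_t\ra\le-\tfrac14\eta_t\|\Q_t\|_{\fro}^2+\eta_t\SS_t=-\tfrac14\AA_t+\eta_t\SS_t$.

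\textbf{Step 3 (stepsize bound).} From the cumulative form $\eta_t=(\etaup^{-2}+\sum_{i<t}\|\Q_i\|_{\fro}^2)^{-1/2}$, together with $\eta_0=\etaup$, we have $\eta_t\le\etaup$ for all $t$. Since $\SS_t\ge0$, this yields $\eta_t\SS_t\le\etaup\SS_t$, and the claimed inequality follows almost surely because no expectations were taken.

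The only delicate point is Step 2. One must pair the descent direction with $\G_t$ rather than with $\nabla f(\X_t)$, so that Lemma~\ref{lemma:GD:bound} applies to the estimator's own projection. The Young constant must then be chosen so that exactly $\tfrac14$ of the $\tfrac12$ is retained. Everything else is routine bookkeeping of constants.
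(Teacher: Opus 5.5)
Your proposal is correct and follows essentially the same route as the paper: the $L$-smoothness descent inequality, the decomposition of $\X_{t+1}-\X_t$ with Lemma~\ref{lemma:M12} to produce $c_1\BB_t$, splitting $\la\Deltas_t,\nabla f(\X_t)\ra$ through $\G_t$ with Lemma~\ref{lemma:GD:bound}, the Young-type bound $ax-\tfrac14x^2\le a^2$, and finally $\eta_t\le\etaup$. The only difference is cosmetic: you apply $\eta_t\le\etaup$ as a separate last step, whereas the paper applies it inside the Young step.
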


\noi Then, we establish upper bounds on the cumulative sums of \(\AA_t\) and \(\BB_t\).

\begin{lemma} [Cumulative bounds for $\AA_t$ and $\BB_t$, proof in \ref{app:lemma:AB}]\label{lemma:AB}
 For every integer $T\geq1$, the following bounds hold almost surely:
	
	\begin{enumerate}[label=\textbf{(\alph*)}, leftmargin=18pt, itemsep=2pt, topsep=1pt, parsep=0pt, partopsep=0pt]
		
		\item $\ts\sum_{t=0}^{T-1} \AA_t \leq  \frac{2\hat{\eta}}{\eta_T}$.
		
		\item $\sum_{t=0}^{T-1} \BB_t \leq \tfrac{4\hat{\eta}^2\sqrt{\bar\eta}}{\sqrt{\eta_T}}$.
	\end{enumerate}
\end{lemma}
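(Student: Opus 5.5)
The plan is the standard AdaGrad-Norm telescoping argument, preceded by a bound on the growth of the stepsize ratio. Write $a_t \triangleq \|\Q_t\|_{\fro}^2$ and $S_t \triangleq \eta_t^{-2} = \etaup^{-2} + \sum_{i=0}^{t-1} a_i$. Then $S_{t+1} = S_t + a_t$ and $\eta_T = S_T^{-1/2}$.

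The stepsize $\eta_t$ is lagged: it does not contain $a_t$. So the first and main step is to show that one step cannot shrink the stepsize by more than the factor $\hat{\eta}$, that is,
\begin{equation*}
\eta_t \leq \hat{\eta}\,\eta_{t+1}, \quad\text{equivalently}\quad \eta_t^2 a_t \leq \hat{\eta}^2 - 1 = 4q\etaup^2\bigl(C_f^2 + 4(q-1)L^2 r\bigr).
\end{equation*}
Since $\eta_t \leq \etaup$, it suffices to bound $\|\Q_t\|_{\fro}^2$ deterministically.
\begin{itemize}
\item Because $\X_t$ and $\X_t^\top\X_t = \mathbf{I}_r$ give $\|\X_t\G_t^\top\X_t\|_{\fro} \leq \|\G_t\|_{\fro}$, we get $\|\Q_t\|_{\fro} \leq 2\|\G_t\|_{\fro}$.
\item Inside the epoch $[\underline{r}_t,\overline{r}_t]$ we write $\G_t = \nabla f(\X_{\underline{r}_t}) + \sum_{i=\underline{r}_t+1}^{t} \tilde{\Deltas}_i$.
\item Assumption~\ref{ass:fx} gives $\|\nabla f(\X_{\underline{r}_t})\|_{\fro} \leq C_f$ and $\|\tilde{\Deltas}_i\|_{\fro} \leq L\|\X_i - \X_{i-1}\|_{\fro} \leq 2L\sqrt{r}$. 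The last inequality holds because every Stiefel point has Frobenius norm $\sqrt{r}$.
\item There are at most $q-1$ correction terms, so $\|\G_t\|_{\fro} \leq C_f + 2(q-1)L\sqrt{r}$. Squaring with $(u+v)^2 \leq 2u^2 + 2v^2$ gives a bound on $a_t$ of the required form.
\end{itemize}
If the constants do not match $\hat{\eta}$ exactly, I would absorb the slack using $q \geq 1$, since any bound of this shape suffices for the telescoping below. This deterministic bound holds almost surely, which is why both conclusions of the lemma hold almost surely.

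For part (a), I use $\eta_t \leq \hat{\eta}\eta_{t+1} = \hat{\eta} S_{t+1}^{-1/2}$ to obtain
\begin{equation*}
\AA_t = \eta_t a_t \leq \hat{\eta}\,\frac{S_{t+1} - S_t}{\sqrt{S_{t+1}}} \leq 2\hat{\eta}\bigl(\sqrt{S_{t+1}} - \sqrt{S_t}\bigr).
\end{equation*}
The last step is the elementary inequality $(x-y)/\sqrt{x} \leq 2(\sqrt{x} - \sqrt{y})$ for $x \geq y > 0$. Summing over $t = 0,\dots,T-1$ telescopes, and dropping $-\sqrt{S_0}$ gives $\sum_t \AA_t \leq 2\hat{\eta}\sqrt{S_T} = 2\hat{\eta}/\eta_T$.

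For part (b), squaring the ratio bound gives $\BB_t = \eta_t^2 a_t \leq \hat{\eta}^2 (S_{t+1} - S_t)/S_{t+1}$. I split $S_{t+1}^{-1} = S_{t+1}^{-3/4} S_{t+1}^{-1/4}$. Since $S_{t+1} \geq \etaup^{-2}$, we have $S_{t+1}^{-1/4} \leq \sqrt{\etaup}$. By concavity of $x \mapsto x^{1/4}$,
\begin{equation*}
(S_{t+1} - S_t)\,S_{t+1}^{-3/4} \leq 4\bigl(S_{t+1}^{1/4} - S_t^{1/4}\bigr).
\end{equation*}
Telescoping then yields $\sum_t \BB_t \leq 4\hat{\eta}^2\sqrt{\etaup}\,S_T^{1/4} = 4\hat{\eta}^2\sqrt{\etaup}/\sqrt{\eta_T}$. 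The only genuinely non-routine ingredient is the ratio bound $\eta_t \leq \hat{\eta}\eta_{t+1}$, which compensates for the lag in the stepsize.
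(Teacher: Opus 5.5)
Your overall plan follows the paper's proof. The paper likewise first establishes the ratio bound $\eta_t\le\hat{\eta}\,\eta_{t+1}$ (Lemma~\ref{lemma:etat}). It then replaces $\eta_t$ by $\hat{\eta}\eta_{t+1}$ and invokes Lemma~\ref{lemma:online} for (a) and Lemma~\ref{lemma:online:pp} with $p=1/4$ for (b). Your telescoping steps are correct and are simply inline proofs of those two lemmas. In (b), your direct concavity bound $(x-y)x^{-3/4}\le 4(x^{1/4}-y^{1/4})$ bypasses the paper's intermediate $\log(1+S_T/c)$ bound and yields the same constant $4\hat{\eta}^2\sqrt{\etaup}$.

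The one step that fails as written is the squaring in the ratio bound.
\begin{itemize}
\item From $\|\G_t\|_{\fro}\le C_f+2(q-1)L\sqrt{r}$ and $(u+v)^2\le 2u^2+2v^2$ you get $\eta_t^2a_t\le 8\etaup^2C_f^2+32(q-1)^2\etaup^2L^2r$.
\item What you need is $\hat{\eta}^2-1=4q\etaup^2C_f^2+16q(q-1)\etaup^2L^2r$.
\item The target minus your bound equals $4(q-2)\etaup^2\bigl(C_f^2-4(q-1)L^2r\bigr)$. This is negative whenever $q\ge3$ and $C_f^2<4(q-1)L^2r$, which matters because Theorem~\ref{the:it:VR} takes $q=\lceil\sqrt N\rceil$. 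It is also negative for $q=1$ if the split is applied literally.
\end{itemize}
Your remedy of absorbing the slack via $q\ge1$ does not work, because the slack goes in the wrong direction. You would only prove the lemma for a constant larger than the $\hat{\eta}$ fixed in the paper, and that $\hat{\eta}$ also feeds into $q'$ and the constants of Lemma~\ref{prop:V}.

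The repair is one line and is exactly the paper's step. The epoch expansion of $\G_t$ has $m=t-\underline{r}_t+1\le q$ summands, so use $\|\sum_{j=1}^m\A_j\|_{\fro}^2\le m\sum_{j=1}^m\|\A_j\|_{\fro}^2$. Equivalently, apply Cauchy--Schwarz to your scalar bound: $(C_f+(q-1)\cdot 2L\sqrt{r})^2\le q\bigl(C_f^2+4(q-1)L^2r\bigr)$. This gives $\|\Q_t\|_{\fro}^2\le 4q\bigl(C_f^2+4(q-1)L^2r\bigr)$ and hence $\eta_t^2a_t\le\hat{\eta}^2-1$ exactly. The rest of your argument then goes through unchanged.
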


\noi Subsequently, the remaining analysis focuses on establishing uniform upper bounds for $\EEE[\tfrac{1}{\eta_{T}}]$ and $\EEE[\XX_T]$. 


\begin{lemma} [Expected reciprocal-stepsize bound, proof in  \ref{app:prop:V}] \label{prop:V}
	We use the definitions of $\dot{k}$ and $\hat{\eta}$ as in Lemma \ref{lemma:M12} and Lemma \ref{lemma:AB}, respectively. Then we define $c_1' \triangleq 4\hat{\eta}^2\sqrt{\bar{\eta}}(\tfrac{\bar{\eta}\dot{k}^2L^2(q-1)}{b} + c_1)$, $w_1 \triangleq 2\EEE[\XX_0]+\frac1{\etaup}$ and $w_2 \triangleq 2c_1'$. We have the following results for all $0 \leq t \leq T-1$:  
	\begin{enumerate}[label=\textbf{(\alph*)}, leftmargin=18pt, itemsep=2pt, topsep=1pt, parsep=0pt, partopsep=0pt]
		
		\item $\EEE[\tfrac{1}{\eta_{T}}] \leq 2w_1 + w_2^2 \triangleq \tfrac{1}{\underline{\eta}}$.
		
		\item $\EEE[\XX_T] \leq \tfrac{w_1}{2} +  \tfrac{w_2}{2\sqrt{\underline{\eta}}} \triangleq \overline{\XX}$.
	\end{enumerate}
\end{lemma}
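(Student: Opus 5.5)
The plan is to telescope Lemma~\ref{prop:VR}, control the variance term $\SS_t$ by the standard {\sf SPIDER} bound, substitute the cumulative bounds of Lemma~\ref{lemma:AB}, and close a self-bounding inequality in $\EEE[1/\eta_T]$.

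\textbf{Telescoping.} Summing Lemma~\ref{prop:VR} over $t=0,\ldots,T-1$ gives
\[
\XX_T+\tfrac14\ts\sum_{t=0}^{T-1}\AA_t\le \XX_0+c_1\ts\sum_{t=0}^{T-1}\BB_t+\etaup\ts\sum_{t=0}^{T-1}\SS_t .
\]

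\textbf{Variance control.} Because $\G_t$ is a martingale-type recursion within each epoch and $\G_{\underline{r}_t}=\nabla f(\X_{\underline{r}_t})$ is exact, the usual {\sf SPIDER} argument combined with Assumption~\ref{ass:fx} and Lemma~\ref{lemma:M12}(i) gives
\[
\EEE[\SS_t]\le \tfrac{L^2}{b}\ts\sum_{s=\underline{r}_t+1}^{t}\EEE\|\X_s-\X_{s-1}\|_{\fro}^2\le \tfrac{\dot{k}^2L^2}{b}\ts\sum_{s=\underline{r}_t}^{t-1}\EEE[\BB_s].
\]
Summing over $t$, each $\BB_s$ appears at most $q-1$ times, so
\[
\etaup\,\EEE\Bigl[\ts\sum_t\SS_t\Bigr]\le \tfrac{\etaup\dot{k}^2L^2(q-1)}{b}\,\EEE\Bigl[\ts\sum_t\BB_t\Bigr].
\]
Hence
\[
\EEE[\XX_T]+\tfrac14\EEE\Bigl[\ts\sum_t\AA_t\Bigr]\le \EEE[\XX_0]+\Bigl(c_1+\tfrac{\etaup\dot{k}^2L^2(q-1)}{b}\Bigr)\EEE\Bigl[\ts\sum_t\BB_t\Bigr].
\]

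\textbf{Substituting Lemma~\ref{lemma:AB}(b).} Bound $\sum_t\BB_t\le 4\hat\eta^2\sqrt{\etaup}/\sqrt{\eta_T}$; the resulting coefficient is exactly $c_1'$, which gives
\[
\EEE[\XX_T]+\tfrac14\EEE\Bigl[\ts\sum_t\AA_t\Bigr]\le \EEE[\XX_0]+c_1'\,\EEE[\eta_T^{-1/2}]. \tag{$\ast$}
\]

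\textbf{Lower bound on $\sum_t\AA_t$.} This is the main obstacle: we need a lower bound on $\sum_t\AA_t$ in terms of $1/\eta_T$, while Lemma~\ref{lemma:AB}(a) is only an upper bound. Use $\eta_t\ge\eta_T$ to get
\[
\ts\sum_t\AA_t\ge \eta_T\sum_t\|\Q_t\|_{\fro}^2=\eta_T\bigl(\eta_T^{-2}-\etaup^{-2}\bigr)=\tfrac{1}{\eta_T}-\tfrac{\eta_T}{\etaup^2}\ge \tfrac{1}{\eta_T}-\tfrac{1}{\etaup}.
\]
If the constant $\tfrac14$ is awkward, replace it by a version with factor $\tfrac12$ through a careful split, since $w_1=2\EEE[\XX_0]+1/\etaup$ suggests multiplying by $2$. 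Inserting this into $(\ast)$, dropping $\XX_T\ge0$, and doubling yields
\[
\EEE[1/\eta_T]\le w_1+w_2\,\EEE[\eta_T^{-1/2}]\le w_1+w_2\sqrt{\EEE[1/\eta_T]}
\]
by Jensen's inequality. Setting $u=\sqrt{\EEE[1/\eta_T]}$, the inequality $u^2\le w_1+w_2u$ implies $u^2\le 2w_1+w_2^2$ via $w_2u\le \tfrac12 w_2^2+\tfrac12u^2$. This is (a). The constants may need adjusting by factors of $2$ depending on how the $\tfrac14$ is handled; I expect bookkeeping there, not new ideas.

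\textbf{Part (b).} Return to $(\ast)$, drop the nonnegative $\AA$ sum, and use Jensen's inequality together with (a):
\[
\EEE[\XX_T]\le \EEE[\XX_0]+c_1'\sqrt{\EEE[1/\eta_T]}\le \tfrac{w_1}{2}+\tfrac{w_2}{2\sqrt{\underline\eta}},
\]
using $\EEE[\XX_0]\le w_1/2$ and $c_1'=w_2/2$. The same argument applies at every intermediate index $t\le T$, which gives the statement for all $t$.
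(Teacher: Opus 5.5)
Your architecture is the paper's: telescope Lemma~\ref{prop:VR}, control $\etaup\,\EEE[\sum_t\SS_t]$ by the {\sf SPIDER} estimate with the $(q-1)$ counting, apply Lemma~\ref{lemma:AB}(b) to get exactly the coefficient $c_1'$, and finish with Jensen and a quadratic self-bound. The gap is your lower bound on $\sum_t\AA_t$. Using $\eta_t\ge\eta_T$ you only obtain $\sum_{t=0}^{T-1}\AA_t\ge 1/\eta_T-1/\etaup$. Inserting this into $(\ast)$ gives $\tfrac14\EEE[1/\eta_T]\le\EEE[\XX_0]+\tfrac{1}{4\etaup}+c_1'\,\EEE[\eta_T^{-1/2}]$. ``Doubling'' this does not give $\EEE[1/\eta_T]\le w_1+w_2\,\EEE[\eta_T^{-1/2}]$; you must multiply by $4$. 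That puts $4\EEE[\XX_0]+1/\etaup$ and $4c_1'$ in place of $w_1$ and $w_2$, so you get a strictly larger constant than $1/\underline{\eta}=2w_1+w_2^2$ and a correspondingly weaker bound in (b). You still obtain some $T$-independent bound, but not the stated one. Nor can the $\tfrac14$ be raised to $\tfrac12$ by a ``careful split''. In Lemma~\ref{prop:VR} it arises from absorbing the cross term $\eta_t\langle\Q_t,\G_t-\nabla f(\X_t)\rangle$ into half of $-\tfrac{\eta_t}{2}\|\Q_t\|_{\fro}^2$. Any coefficient closer to $\tfrac12$ inflates the coefficient of $\SS_t$, and hence $c_1'$.

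The missing ingredient is a sharper per-step telescoping bound. Since $\|\Q_t\|_{\fro}^2=\eta_{t+1}^{-2}-\eta_t^{-2}$ and $\eta_{t+1}\le\eta_t$,
\[
\AA_t=\eta_t\bigl(\eta_{t+1}^{-1}-\eta_t^{-1}\bigr)\bigl(\eta_{t+1}^{-1}+\eta_t^{-1}\bigr)\ge 2\bigl(\eta_{t+1}^{-1}-\eta_t^{-1}\bigr),
\]
because $\eta_t(\eta_{t+1}^{-1}+\eta_t^{-1})=\eta_t/\eta_{t+1}+1\ge 2$. Summing gives $\sum_{t=0}^{T-1}\AA_t\ge 2(1/\eta_T-1/\etaup)$, which is asymptotically twice your bound as $\eta_T\to0$. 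Then $\tfrac14\sum_t\AA_t\ge\tfrac12(1/\eta_T-1/\etaup)$, and doubling $(\ast)$ gives exactly the paper's key inequality $\EEE[1/\eta_T]+2\EEE[\XX_T]\le w_1+w_2\sqrt{\EEE[1/\eta_T]}$. From there, your completion of (a) via $w_2u\le\tfrac12 w_2^2+\tfrac12 u^2$ goes through as written. So does your argument for (b), which drops the $\AA$-sum and uses $\EEE[\XX_0]\le w_1/2$ and $c_1'=w_2/2$.
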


\noi Finally, we present the iteration complexity and component-gradient   complexity for {\sf RGR-SPIDER}.

\begin{theorem} [Iteration complexity and component-gradient   complexity, proof in \ref{app:the:it:VR}]\label{the:it:VR}
	Let the sequence $\{\X_t\}_{t=0}^{T-1}$ be generated by {\sf RGR-SPIDER}, and define the Riemannian gradient as $\operatorname{grad}f(\X_t) \triangleq \nabla f(\X_t) - \X_t\nabla f(\X_t)\trans\X_t$.
	
	\begin{enumerate}[label=\textbf{(\alph*)}, leftmargin=18pt, itemsep=2pt, topsep=1pt, parsep=0pt, partopsep=0pt]
		\item Let $\tau$ be sampled uniformly from $\{0,\ldots,T-1\}$, independently of all other algorithmic randomness.  We established that $\EEE\!\left[
		\|\operatorname{grad}f(\X_\tau)\|_{\fro}
		\right]
		= \tfrac{1}{T} \ts \sum_{t=0}^{T-1}  \EEE[\|\operatorname{grad}f(\X_t)\|_{\fro} ] = \OO(T^{-1/2})$. Therefore, $\X_\tau$ is an $\varepsilon$-critical point in expectation
		after $T=\OO(\varepsilon^{-2})$ iterations.
		
		\item  Set both mini-batch size $b$ and update frequency parameter $q$ as $\lceil \sqrt N\rceil$. Then, {\sf RGR-SPIDER} achieves a component-gradient   complexity of $\mathcal{O}(N + \sqrt{N}\varepsilon^{-2})$ to find an $\varepsilon$-critical point $\X_\tau$ satisfying $\EEE[\|\operatorname{grad}f(\X_\tau)\|_{\fro}] \leq \varepsilon$.
	\end{enumerate}
	
\end{theorem}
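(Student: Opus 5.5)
We want to bound $\sum_{t=0}^{T-1}\EEE[\|\operatorname{grad}f(\X_t)\|_{\fro}]$ by $\OO(\sqrt{T})$ using Lemmas~\ref{lemma:AB} and~\ref{prop:V}. The first identity in (a) holds because $\tau$ is uniform on $\{0,\ldots,T-1\}$ and independent of the iterates. For the bound we split the Riemannian gradient using linearity of $\G\mapsto\G-\X\G^\top\X$:
\[
\operatorname{grad}f(\X_t)=\Q_t-\bigl[(\G_t-\nabla f(\X_t))-\X_t(\G_t-\nabla f(\X_t))^\top\X_t\bigr].
\]
Since $\|\X_t\|_2=1$, the bracket has norm at most $2\sqrt{\SS_t}$. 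Hence $\|\operatorname{grad}f(\X_t)\|_{\fro}\le\|\Q_t\|_{\fro}+2\sqrt{\SS_t}$, and it suffices to control $\sum_t\EEE\|\Q_t\|_{\fro}$ and $\sum_t\EEE\sqrt{\SS_t}$.

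\textbf{The $\Q_t$ term.} Apply Cauchy--Schwarz with the weight $\eta_t$:
\[
\sum_{t}\|\Q_t\|_{\fro}\le\Bigl(\sum_t\eta_t\|\Q_t\|_{\fro}^2\Bigr)^{1/2}\Bigl(\sum_t\eta_t^{-1}\Bigr)^{1/2}.
\]
The stepsizes are nonincreasing, so $\eta_t^{-1}\le\eta_T^{-1}$. Lemma~\ref{lemma:AB}(a) then gives $\sum_t\|\Q_t\|_{\fro}\le\sqrt{2\hat\eta/\eta_T}\cdot\sqrt{T/\eta_T}=\sqrt{2\hat\eta T}\,\eta_T^{-1}$. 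Taking expectations and using Lemma~\ref{prop:V}(a), $\EEE[\eta_T^{-1}]\le1/\underline{\eta}$, yields $\sum_t\EEE\|\Q_t\|_{\fro}\le\sqrt{2\hat\eta T}/\underline{\eta}=\OO(\sqrt T)$.

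\textbf{The variance term.} We use the standard SPIDER bound. Within an epoch the increments $\tilde{\Deltas}_s-(\nabla f(\X_s)-\nabla f(\X_{s-1}))$ form a martingale difference sequence with conditional second moment at most $\tfrac{L^2}{b}\|\X_s-\X_{s-1}\|_{\fro}^2$. Lemma~\ref{lemma:M12}(i) bounds the step by $\dot k\eta_{s-1}\|\Q_{s-1}\|_{\fro}$, so
\[
\EEE[\SS_t]\le\tfrac{L^2\dot k^2}{b}\sum_{s=\underline{r_t}+1}^{t}\EEE[\BB_{s-1}].
\]
Summing over $t$ counts each $\BB_s$ at most $q-1$ times, giving $\sum_t\EEE[\SS_t]\le\tfrac{(q-1)L^2\dot k^2}{b}\sum_t\EEE[\BB_t]$. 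Lemma~\ref{lemma:AB}(b) bounds the last sum by $4\hat\eta^2\sqrt{\bar\eta}\,\EEE[\eta_T^{-1/2}]$, and Jensen with Lemma~\ref{prop:V}(a) gives $\EEE[\eta_T^{-1/2}]\le\underline{\eta}^{-1/2}$, a constant. Therefore
\[
\sum_t\EEE\sqrt{\SS_t}\le\sqrt{T\sum_t\EEE\SS_t}=\OO(\sqrt T).
\]
Combining the two pieces, $\tfrac1T\sum_t\EEE\|\operatorname{grad}f(\X_t)\|_{\fro}=\OO(T^{-1/2})$, and $T=\OO(\varepsilon^{-2})$ follows.

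\textbf{Where care is needed.} The delicate part is the treatment of $\eta_T^{-1}$, which is random and appears inside products. We avoid expectations of products by bounding everything pathwise in terms of $\eta_T^{-1}$ before taking expectations, so that only $\EEE[\eta_T^{-1}]$ and $\EEE[\eta_T^{-1/2}]$ arise. We also need the martingale variance bound to hold with the adaptive (predictable) stepsize, which is valid because $\eta_s$ is $\mathcal H_{s-1}$-measurable.

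\textbf{Part (b).} With $b=q=\lceil\sqrt N\rceil$, each epoch costs $N$ for the full gradient plus $2b(q-1)=\OO(N)$ for the mini-batches. Over $T$ iterations there are $\lceil T/q\rceil$ epochs, so the total cost is $\OO(N+TN/q)=\OO(N+\sqrt N T)=\OO(N+\sqrt N\varepsilon^{-2})$. This requires the constants in (a) to stay bounded under this choice. The factor $(q-1)/b\approx1$ is harmless. However, $\hat\eta$ and $q'$ grow with $q$, so we must check that the constants' dependence on $q$ is controlled, for example by choosing $\bar\eta=\OO(1/q)$ so that $\hat\eta=\OO(1)$. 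This step could fail if $\hat\eta$ cannot be kept constant, and it is the point most likely to need a careful choice of $\bar\eta$.
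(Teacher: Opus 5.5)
Your argument for (a) is correct, and it is shorter than the paper's. The paper proves a mean-square statement. It multiplies the one-step inequality of Lemma~\ref{prop:VR} by $1/\eta_t$ and controls $\EEE[\sum_t\eta_t^{-1}\SS_t]$ with the epoch-weighted bound of Lemma~\ref{lemma:idk}(a); this is where $q'=q\hat\eta^{q-1}$ enters. It handles $\sum_t(f(\X_t)-f(\X_{t+1}))/\eta_t$ with Lemma~\ref{lemma:remove:log} and the diameter of $\mathrm{St}(n,r)$. This yields $\sum_t\EEE\|\Q_t\|_{\fro}^2\le 4C_Q\,\EEE[1/\eta_T]=\OO(1)$, hence $\tfrac1T\sum_t\EEE\|\operatorname{grad}f(\X_t)\|_{\fro}^2=\OO(1/T)$ before taking square roots.

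You work with first moments instead. The $2\sqrt{\SS_t}$ piece is handled exactly as in the proof of Lemma~\ref{prop:V}, and the pathwise weighted Cauchy--Schwarz step reduces the $\Q_t$ piece to $\EEE[1/\eta_T]$ alone. You do not even need Lemma~\ref{lemma:AB}(a) there: by the definition of the stepsize, $\sum_{t<T}\|\Q_t\|_{\fro}^2\le\eta_T^{-2}$, so $\sum_t\|\Q_t\|_{\fro}\le\sqrt T/\eta_T$. Your route avoids the $1/\eta_t$-weighted variance estimate, Lemma~\ref{lemma:remove:log}, and $q'$. What it gives up is the stronger $\OO(1/T)$ mean-square bound, which the theorem does not need.

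For (b), your cost count is the paper's. The paper substitutes $T=\OO(\varepsilon^{-2})$ into $N\lceil T/q\rceil+2bT$ and treats the constant from (a) as independent of $N$. You are right that this is not innocuous. With $\etaup$ fixed and $q=\lceil\sqrt N\rceil$, one has $\hat\eta=\Theta(\sqrt N)$, hence $w_2=\Theta(\hat\eta^2)$ and $1/\underline\eta=2w_1+w_2^2=\Theta(N^2)$. The paper's constant additionally contains $q'=q\hat\eta^{q-1}$.

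However, the remedy you propose fails. Choosing $\etaup=\OO(1/q)$ does give $\hat\eta=\OO(1)$. But your $\Q_t$ bound is $\sqrt{2\hat\eta T}\,\EEE[1/\eta_T]\ge\sqrt{2\hat\eta T}/\etaup$, because $\eta_T\le\etaup$. So it is $\Omega(\sqrt{NT})$ however Lemma~\ref{prop:V} is sharpened. It then certifies $\varepsilon$-criticality only after $T=\Theta(N\varepsilon^{-2})$ iterations, i.e.\ at a cost of order $N^{3/2}\varepsilon^{-2}$.

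Intermediate choices $\etaup\propto q^{-\alpha}$ do not help either. The term $1/\etaup$ in $w_1$ and the term $w_2^2\propto\hat\eta^4\etaup$ pull in opposite directions, and the best balance ($\alpha=2/3$) still leaves $1/\underline\eta$ of order $N^{1/3}$.

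So, with the constants of Lemmas~\ref{lemma:AB} and~\ref{prop:V} as they stand, (b) holds only in the sense the paper intends: $\hat\eta$ and $\underline\eta$ are absorbed into $\OO(\cdot)$ as if they did not depend on $q$. You should state it that way rather than rest it on a choice of $\etaup$. An $N$-explicit $\OO(N+\sqrt N\varepsilon^{-2})$ bound would require removing the $q$-dependence of $\hat\eta$ in Lemma~\ref{lemma:etat}, which neither your argument nor the paper's does.
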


\begin{remark}\rm{}
\textbf{(}\bfit{i}\textbf{)} Lemma \ref{prop:V} shows that the adaptive stepsize $\EEE[\tfrac{1}{\eta_{t}}]$ is bounded below by a strictly positive constant $\tfrac{1}{\underline{\eta}}$. This  property implies that $\eta_t$ does not decay indefinitely and, in particular, prevents the stepsize from vanishing. Such a property is crucial for establishing the iteration complexity result in Theorem \ref{the:it:VR}. \textbf{(}\bfit{ii}\textbf{)} Theorem \ref{the:it:VR} provides the  iteration complexity result for Lipschitz-free methods within the adaptive Riemannian retraction applied to finite-sum optimization problems. Part \textbf{(b)} of Theorem \ref{the:it:VR} further refines this result by specifying suitable choices of $b$ and $q$. In particular, by setting $b = q = \lceil \sqrt N\rceil $, {\sf RGR-SPIDER} attains a component-gradient   complexity of $\mathcal{O}(N+\sqrt{N}\varepsilon^{-2})$, matching the optimal complexity order of {\sf SPIDER}-type methods for nonconvex finite-sum optimization.
\end{remark}

\subsection{Analysis for {\sf RGR-SRM}}\label{sec:it:srm}
This subsection establishes the convergence and iteration-complexity guarantees for {\sf RGR-SRM}. Let $\X_t$ denote the iterate generated at iteration $t$. For our analysis, we define  $\YY_t = \EEE[f(\X_t)- f(\bar{\X})] $, and introduce two auxiliary quantities: $\CC_t \triangleq \EEE[ \|\M_t- \nabla f(\X_t)\|^2_{\fro}]$ and $\DD_t \triangleq \EEE[\|\M_t - \X_t \M_t \trans \X_t\|^2_{\fro}]$. Here, $\CC_t$ measures the estimation error of the recursive stochastic estimator $\M_t$, whereas $\DD_t$ measures the squared norm of its tangent component. Our convergence analysis unfolds in three steps: \textbf{(\bfit{i})} Using the smoothness of the objective, we establish a one-step bound for the sequence $\{\YY_t\}$.  \textbf{(}\bfit{ii}\textbf{)}  Then we derive a recursive bound of $\CC_t$ associated with $\CC_{t-1}$ and $\DD_{t-1}$. \textbf{(}\bfit{iii}\textbf{)}  Finally, combining above results, we establish the iteration complexity of {\sf RGR-SRM}. The following two lemmas provide the one-step bound for $\YY_t$ and the recursive bound for $\CC_t$ for {\sf RGR-SRM}.

\begin{lemma}[One-step bound for $\YY_t$, proof in Appendix~\ref{app:prop:C}]\label{prop:C}   
	We choose $\theta$ sufficiently small so that $c_2 \triangleq  \tfrac{C_f\ddot{k}\theta + \dot{k}^2L\theta}{2} - \tfrac{1}{4} <0$. The sequence $\YY_t$ satisfies the following inequality for $t\geq1$: $$	\YY_{t+1} - \YY_{t}  \leq c_2 \gamma_t \DD_t + \gamma_t  \CC_t.$$
\end{lemma}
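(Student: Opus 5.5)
We follow the standard descent argument for retraction methods, combining the smoothness of $f$ in the ambient Euclidean space with the two retraction bounds in Lemma~\ref{lemma:M12}. Write $\X_{t+1}=\Retr_{\X_t}(-\gamma_t\R_t)$ and $\Deltas_t\triangleq-\gamma_t\R_t\in T_{\X_t}\mathrm{St}(n,r)$. Since $\nabla f$ is $L$-Lipschitz (Assumption~\ref{ass:fx} and Jensen), we have
\begin{equation*}
f(\X_{t+1})\le f(\X_t)+\la\nabla f(\X_t),\X_{t+1}-\X_t\ra+\tfrac{L}{2}\|\X_{t+1}-\X_t\|_{\fro}^2 .
\end{equation*}
We split $\X_{t+1}-\X_t=\Deltas_t+(\X_{t+1}-\X_t-\Deltas_t)$. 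The second piece is controlled by $\|\nabla f(\X_t)\|_{\fro}\le C_f$ together with Lemma~\ref{lemma:M12}(ii), which gives $\tfrac{1}{2}C_f\ddot{k}\gamma_t^2\|\R_t\|_{\fro}^2$. The quadratic term is bounded by Lemma~\ref{lemma:M12}(i), giving $\tfrac{L}{2}\dot k^2\gamma_t^2\|\R_t\|_{\fro}^2$. This leaves the linear term $-\gamma_t\la\nabla f(\X_t),\R_t\ra$.

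For the linear term, we write $\nabla f(\X_t)=\M_t-\E_t$ with $\E_t\triangleq\M_t-\nabla f(\X_t)$. By Lemma~\ref{lemma:GD:bound} applied with $\G=\M_t$ (so that $\GGG=\R_t$), we get $-\gamma_t\la\M_t,\R_t\ra\le-\tfrac{\gamma_t}{2}\|\R_t\|_{\fro}^2$. The cross term $\gamma_t\la\E_t,\R_t\ra$ is handled by Young's inequality:
\begin{equation*}
\gamma_t\la\E_t,\R_t\ra\le\gamma_t\|\E_t\|_{\fro}^2+\tfrac{\gamma_t}{4}\|\R_t\|_{\fro}^2 .
\end{equation*}
Together these give $-\tfrac{\gamma_t}{4}\|\R_t\|_{\fro}^2+\gamma_t\|\M_t-\nabla f(\X_t)\|_{\fro}^2$.

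Combining the pieces yields, almost surely,
\begin{equation*}
f(\X_{t+1})-f(\X_t)\le\Big(\tfrac{(C_f\ddot k+\dot k^2L)\gamma_t}{2}-\tfrac14\Big)\gamma_t\|\R_t\|_{\fro}^2+\gamma_t\|\M_t-\nabla f(\X_t)\|_{\fro}^2 .
\end{equation*}
Since $\gamma_t=\theta t^{-1/3}\le\theta$ for $t\ge1$ and the coefficient is nondecreasing in $\gamma_t$, we may replace $\gamma_t$ by $\theta$ inside the bracket. This gives exactly $c_2$, which is negative by the choice of $\theta$. The inequality still holds after this replacement because $\|\R_t\|_{\fro}^2\ge0$.

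Finally, the stepsizes $\gamma_t$ are deterministic (open-loop), so taking total expectations and subtracting $f(\bar\X)$ on both sides gives $\YY_{t+1}-\YY_t\le c_2\gamma_t\DD_t+\gamma_t\CC_t$.

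The only delicate point is the Young split. Its constants must give exactly $-\tfrac14$ and coefficient $1$ on the error term, and the chosen split does this. Beyond that, the argument only requires monotonicity in $\gamma_t$, so that the bound holds with the constant $c_2$ uniformly in $t$.
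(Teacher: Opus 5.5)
Your proposal is correct and follows essentially the same route as the paper's proof. Both arguments apply the ambient descent lemma, bound the retraction terms via Lemma~\ref{lemma:M12}, apply Lemma~\ref{lemma:GD:bound} to $\M_t$, handle the cross term with $ax-\tfrac14x^2\le a^2$ (your Young split), and use $\gamma_t\le\theta$ before taking expectations.
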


\begin{lemma}[Recursive bound  of $\CC_t$, proof in \ref{app:prop:D}]\label{prop:D}
 	 We first define $\LLL_t \triangleq 6L^2\gamma_{t-1}^2\dot{k}^2 \DD_{t-1} + 3 \alpha_t^2\sigma^2$. Then the sequence $\CC_t$ satisfies the following recursive bound for all $t\geq 1$: $$\CC_t \leq (1-\alpha_t)^2\CC_{t-1} + \LLL_t.$$
\end{lemma}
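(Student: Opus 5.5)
We use the standard {\sf STORM}-type decomposition of the estimation error. From the update of $\M_t$,
\[
\M_t-\nabla f(\X_t)=(1-\alpha_t)\big(\M_{t-1}-\nabla f(\X_{t-1})\big)+\Z_t,
\]
where
\[
\Z_t\triangleq \alpha_t\big(\nabla f(\X_t;\xi_t)-\nabla f(\X_t)\big)+(1-\alpha_t)\Big(\big(\nabla f(\X_t;\xi_t)-\nabla f(\X_{t-1};\xi_t)\big)-\big(\nabla f(\X_t)-\nabla f(\X_{t-1})\big)\Big).
\]
To verify this, add and subtract $(1-\alpha_t)\nabla f(\X_{t-1})$ and $(1-\alpha_t)\nabla f(\X_t)$.

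The variables $\X_t$, $\X_{t-1}$ and $\M_{t-1}$ are all $\mathcal H_{t-1}$-measurable, since $\X_t=\Retr_{\X_{t-1}}(-\gamma_{t-1}\R_{t-1})$. The batch $\xi_t$ is independent of $\mathcal H_{t-1}$, so Assumption~\ref{ass:sigma} gives $\EEE_t[\Z_t]=\zero$. Expanding the square and taking conditional expectations therefore kills the cross term:
\[
\EEE_t\|\M_t-\nabla f(\X_t)\|_{\fro}^2=(1-\alpha_t)^2\|\M_{t-1}-\nabla f(\X_{t-1})\|_{\fro}^2+\EEE_t\|\Z_t\|_{\fro}^2.
\]
Taking total expectations turns the first term into $(1-\alpha_t)^2\CC_{t-1}$. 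It remains to show $\EEE\|\Z_t\|_{\fro}^2\le \LLL_t$.

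To bound $\|\Z_t\|^2$, split it into three pieces using $\|a+b+c\|^2\le 3(\|a\|^2+\|b\|^2+\|c\|^2)$:
\begin{itemize}
\item $a=\alpha_t(\nabla f(\X_t;\xi_t)-\nabla f(\X_t))$, whose second moment is at most $\alpha_t^2\sigma^2$;
\item $b=(1-\alpha_t)(\nabla f(\X_t;\xi_t)-\nabla f(\X_{t-1};\xi_t))$, bounded by $L^2\|\X_t-\X_{t-1}\|^2$ using $(1-\alpha_t)^2\le1$ and Assumption~\ref{ass:fx};
\item $c=(1-\alpha_t)(\nabla f(\X_t)-\nabla f(\X_{t-1}))$, likewise at most $L^2\|\X_t-\X_{t-1}\|^2$.
\end{itemize}
This yields $\EEE\|\Z_t\|^2\le 3\alpha_t^2\sigma^2+6L^2\EEE\|\X_t-\X_{t-1}\|^2$. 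The constants $3$ and $6$ match $\LLL_t$ exactly, which indicates this crude split, rather than the sharper variance bound $\EEE\|X-\EEE X\|^2\le\EEE\|X\|^2$, is the intended route. With a mini-batch the $\sigma^2$ term could be divided by $b$, but dropping that factor is harmless.

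Finally, convert the displacement into $\DD_{t-1}$ via Lemma~\ref{lemma:M12}(i):
\[
\|\X_t-\X_{t-1}\|_{\fro}=\|\Retr_{\X_{t-1}}(-\gamma_{t-1}\R_{t-1})-\X_{t-1}\|_{\fro}\le \dot k\gamma_{t-1}\|\R_{t-1}\|_{\fro}.
\]
Since $\gamma_{t-1}$ is deterministic, $\EEE\|\X_t-\X_{t-1}\|^2\le \dot k^2\gamma_{t-1}^2\DD_{t-1}$. Substituting gives $\CC_t\le(1-\alpha_t)^2\CC_{t-1}+6L^2\gamma_{t-1}^2\dot k^2\DD_{t-1}+3\alpha_t^2\sigma^2$.

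The main care point is the base case $t=1$. There $\X_1=\X_0$ and $\M_0=\zero$, and we need $\gamma_0=\theta$ as set in the initialization. The correction term vanishes, so $\M_1=\nabla f(\X_1;\xi_1)$ (because $\alpha_1=1$), and the bound reduces to $\CC_1\le\sigma^2\le 3\sigma^2$. This is consistent provided the measurability conventions are respected. The only genuinely delicate step is justifying that the cross term vanishes, which hinges on $\M_{t-1}$, $\X_t$ and $\X_{t-1}$ being $\mathcal H_{t-1}$-measurable while $\xi_t$ is fresh.
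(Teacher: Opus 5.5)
Your proposal is correct and follows the paper's proof essentially step for step. Your $\Z_t$ is exactly the paper's $\Y_t$ in the decomposition $\H_t=(1-\alpha_t)\H_{t-1}+\Y_t$, the cross term vanishes by unbiasedness given $\mathcal{H}_{t-1}$, the same three-term split gives the constants $6L^2$ and $3\alpha_t^2\sigma^2$, and Lemma~\ref{lemma:M12}(i) turns $\|\X_t-\X_{t-1}\|_{\fro}^2$ into $\gamma_{t-1}^2\dot{k}^2\DD_{t-1}$ (the paper's last line additionally relaxes $(1-\alpha_t)^2$ to $(1-\alpha_t)$, whereas you stop at the form actually stated in the lemma).
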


\noi Combining Lemmas~\ref{prop:C} and~\ref{prop:D}, we establish the following iteration complexity of {\sf RGR-SRM}.
\begin{theorem}[Iteration complexity, proof in \ref{app:the:E}]\label{the:E}
	Suppose Assumptions \ref{ass:fx} and \ref{ass:sigma} hold. Let the sequence $\{\X_t\}$ be generated by {\sf RGR-SRM}, and define the full Riemannian gradient at $\X_t$ as $\operatorname{grad}f(\X_{t}) \triangleq \nabla f(\X_t) - \X_t\nabla f(\X_t)\trans\X_t$. Let $\delta>0$ and define $c_3 \triangleq -c_2 - 24 \cdot 2^{1/3}\theta^2 L^2\dot{k}^2(1+\delta)$. If there exist a constant $\delta$ and a sufficiently small  $\theta$ such that $c_3 > 0$, then {\sf RGR-SRM} achieves an iteration complexity of $\tilde{\OO}(\varepsilon^{-3})$ to find an $\varepsilon$-critical point. Let $\tau$ be sampled uniformly from $\{1,\ldots,T\}$, independently
	of all other algorithmic randomness. Then
	$$\ts\EEE\!\left[
	\|\operatorname{grad}f(\X_\tau)\|_{\fro}
	\right]
	= \tfrac{1}{T}\sum_{t=1}^{T} \EEE[\|\operatorname{grad}f(\X_{t})\|_{\fro}]  = \tilde{\OO}(T^{-1/3}),$$
\end{theorem}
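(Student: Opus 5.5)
We combine Lemma~\ref{prop:C} and Lemma~\ref{prop:D} through a Lyapunov function of the form
\[
\Phi_t \triangleq \YY_t + \lambda_{t-1}\CC_{t-1}, \qquad \lambda_{t}\triangleq \frac{\kappa\,\gamma_{t}}{\alpha_{t+1}} \ \text{(up to constants)},
\]
with $\kappa>0$ to be tuned. Since $\gamma_t/\alpha_t=\theta t^{1/3}$, this weight grows like $t^{1/3}$. This is the standard {\sf STORM}-type potential.

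The first step bounds the change in the $\CC$ part. By Lemma~\ref{prop:D},
\[
\lambda_t\CC_t-\lambda_{t-1}\CC_{t-1}\le \bigl(\lambda_t(1-\alpha_t)^2-\lambda_{t-1}\bigr)\CC_{t-1}+\lambda_t\LLL_t .
\]
We need the coefficient of $\CC_{t-1}$ to be at most $-\gamma_{t-1}$ (up to a factor $1+\delta$), so that it absorbs the $+\gamma_{t-1}\CC_{t-1}$ coming from Lemma~\ref{prop:C}. Using $(1-\alpha_t)^2\le 1-\alpha_t$, we need $\lambda_t-\lambda_{t-1}-\lambda_t\alpha_t\le-(1+\delta)\gamma_{t-1}$. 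With $\lambda_t\propto t^{1/3}$ we have $\lambda_t-\lambda_{t-1}=\OO(t^{-2/3})$, while $\lambda_t\alpha_t\propto t^{-1/3}$. This needs care: the increment $t^{1/3}-(t-1)^{1/3}\le \tfrac13 (t-1)^{-2/3}\le \tfrac{2^{2/3}}{3}t^{-2/3}$ for $t\ge2$, and $t^{-2/3}\le t^{-1/3}$. This is where the $2^{1/3}$ and the factor $(1+\delta)$ in $c_3$ should come from, after choosing $\lambda_t=(1+\delta)'\,\cdot 2\gamma_t/\alpha_t$ or similar. The term $\lambda_t\LLL_t$ splits into two parts:
\begin{itemize}
\item $6L^2\dot{k}^2\lambda_t\gamma_{t-1}^2\DD_{t-1}$. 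Since $\lambda_t\gamma_{t-1}^2\lesssim\theta^2\gamma_{t-1}\cdot\text{const}\,(1+\delta)$, this is absorbed by $c_2\gamma_{t-1}\DD_{t-1}$, leaving $-c_3\gamma_{t-1}\DD_{t-1}$ by the definition of $c_3$.
\item $3\lambda_t\alpha_t^2\sigma^2\propto t^{1/3}t^{-4/3}=t^{-1}$. Summing this gives the $\log T$ factor.
\end{itemize}

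The second step is to telescope. Summing from $t=1$ to $T$ gives
\[
c_3\sum_t\gamma_t\DD_t+\sum_t\gamma_t\CC_t\cdot(\text{small positive})\le \Phi_1+\OO(\sigma^2\log T).
\]
Here $\Phi_1$ is finite because $\YY_1\le 2C_f\cdot\text{diam}$, or simply because $f$ is bounded on the compact manifold, and $\CC_0=\|\nabla f(\X_0)\|^2\le C_f^2$. We keep a margin (via $\delta$) so that $\sum\gamma_t\CC_t$ is also bounded by $\OO(\log T)$. Since $\gamma_t\ge\theta T^{-1/3}$, this yields $\sum_{t=1}^T(\DD_t+\CC_t)=\OO(T^{1/3}\log T)$.

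The third step converts this to the Riemannian gradient. The map $\G\mapsto\G-\X\G\trans\X$ is linear with operator norm at most $2$, so
\[
\|\operatorname{grad}f(\X_t)\|\le\|\R_t\|+2\|\M_t-\nabla f(\X_t)\|.
\]
Applying Jensen/Cauchy–Schwarz,
\[
\frac1T\sum_t\EEE\|\operatorname{grad}f(\X_t)\|\le \sqrt{\tfrac{1}{T}\textstyle\sum\DD_t}+2\sqrt{\tfrac1T\textstyle\sum\CC_t}=\OO\bigl(\sqrt{T^{-2/3}\log T}\bigr)=\tilde{\OO}(T^{-1/3}).
\]
Setting this equal to $\varepsilon$ gives $T=\tilde{\OO}(\varepsilon^{-3})$. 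The equality with $\EEE\|\operatorname{grad}f(\X_\tau)\|$ holds because $\tau$ is uniform and independent.

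The main obstacle is the first step: choosing $\lambda_t$ so that the drift of the weight ($\lambda_t-\lambda_{t-1}$) is dominated by $\lambda_t\alpha_t$ with enough room left over to cancel $\gamma_{t-1}\CC_{t-1}$. The indices also do not line up, since Lemma~\ref{prop:C} gives $\gamma_t\CC_t$ while Lemma~\ref{prop:D} involves $\gamma_{t-1}^2\DD_{t-1}$, and the fix is to use the ratio bound $\gamma_{t-1}/\gamma_t\le2^{1/3}$. If the constants do not close, I would fall back to weighting by $\lambda_t=\kappa/(\theta t^{1/3})\cdot t^{2/3}$ explicitly and verify the inequality numerically for small $t$, treating $t=1$ separately.
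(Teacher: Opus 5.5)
Your proposal is correct and follows essentially the paper's proof. The paper uses the potential $\YY_t + 4\theta^2(1+\delta)\CC_t/\gamma_t$, which is your weight taken as $\lambda_t=\kappa\gamma_t/\alpha_t$ with $\kappa=4(1+\delta)$ (since $\gamma_t/\alpha_t=\theta^2/\gamma_t$); it packages the weight-drift estimate in Lemma~\ref{lemma:A5}, telescopes with the $\delta$-margin, and converts via $\|\A-\X\A\trans\X\|_{\fro}^2\le 4\|\A\|_{\fro}^2$ and Cauchy--Schwarz exactly as you do. With that choice of $\kappa$ the constant $24\cdot 2^{1/3}$ in $c_3$ comes out exactly, so no fallback is needed; the $2^{1/3}$ enters only through $\gamma_t\le 2^{1/3}\gamma_{t+1}$ in the $\DD$-term, not through the drift bound.
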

where $\tilde{\OO}(\cdot)$ hides polylogarithmic factors.

\subsection{Analysis for {\sf RGR-EMA}}
This subsection provides the convergence analysis of {\sf RGR-EMA}. Let $\X_t$ denote the solution generated by {\sf RGR-EMA} at the $t$-th iteration. Following Section \ref{sec:it:srm}, we define $\ZZ_t = \EEE[f(\X_t)- f(\bar{\X})]$, $\EE_t \triangleq \EEE[ \|\N_t- \nabla f(\X_t)\|^2_{\fro}]$ and $\FF_t \triangleq \EEE[\|\N_t - \X_t \N_t \trans \X_t\|^2_{\fro}]$. Here, \(\EE_t\) measures the estimation error of the EMA estimator \(\N_t\), whereas \(\FF_t\) measures the squared norm of its tangent component. The analysis follows the same three-stage structure as that of {\sf RGR-SRM} in Section~\ref{sec:it:srm}. We first establish a one-step bound for \(\{\ZZ_t\}\) and a recursive bound for \(\EE_t\), and then combine these results to derive the iteration complexity of {\sf RGR-EMA}.

\begin{lemma}[One-step bound for $\ZZ_t$, proof in \ref{app:prop:F}]\label{prop:F}
	We find a $\theta$ sufficiently small so that $c_2 \triangleq  \tfrac{C_f\ddot{k}\theta + \dot{k}^2L\theta}{2} - \tfrac{1}{4}<0$. The sequence $\ZZ_t$ satisfies: $$\ZZ_{t+1} - \ZZ_{t}  \leq c_2 \mu_t \FF_t + \mu_t  \EE_t.$$
\end{lemma}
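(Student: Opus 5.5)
The plan is to reproduce, pathwise and then in expectation, the standard descent argument for a retraction step, applied to the ambient function $f$. Write $\X_{t+1}=\Retr_{\X_t}(-\mu_t\S_t)$ and decompose
\[
\X_{t+1}-\X_t=-\mu_t\S_t+\bigl(\Retr_{\X_t}(-\mu_t\S_t)-\X_t+\mu_t\S_t\bigr).
\]
By the $L$-smoothness of $f$ (Assumption~\ref{ass:fx}, inherited by $f$ from $f(\cdot;\xi)$) we have
\[
f(\X_{t+1})\le f(\X_t)+\la\nabla f(\X_t),\X_{t+1}-\X_t\ra+\tfrac{L}{2}\|\X_{t+1}-\X_t\|_{\fro}^2 .
\]
We then bound the linear term using the decomposition together with Cauchy--Schwarz, $\|\nabla f(\X_t)\|_{\fro}\le C_f$, and Lemma~\ref{lemma:M12}(ii). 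This gives $-\mu_t\la\nabla f(\X_t),\S_t\ra+\tfrac{C_f\ddot{k}}{2}\mu_t^2\|\S_t\|_{\fro}^2$. The quadratic term is bounded by Lemma~\ref{lemma:M12}(i), giving $\tfrac{L\dot{k}^2}{2}\mu_t^2\|\S_t\|_{\fro}^2$. Altogether the second-order part is $\tfrac{C_f\ddot{k}+\dot{k}^2L}{2}\mu_t^2\|\S_t\|_{\fro}^2$.

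Next we handle the inner product $-\la\nabla f(\X_t),\S_t\ra$. Write $\nabla f(\X_t)=\N_t-(\N_t-\nabla f(\X_t))$. The term $-\la\N_t,\S_t\ra$ is at most $-\tfrac12\|\S_t\|_{\fro}^2$ by Lemma~\ref{lemma:GD:bound}, applied with $\G=\N_t$, so that $\GGG=\S_t$. For the error term, the linear map $P_{\X}(\G)=\G-\X\G^\top\X$ gives
\[
\la\N_t-\nabla f(\X_t),\S_t\ra\le\|\N_t-\nabla f(\X_t)\|_{\fro}\|\S_t\|_{\fro}\le\|\N_t-\nabla f(\X_t)\|_{\fro}^2+\tfrac14\|\S_t\|_{\fro}^2
\]
by Young's inequality. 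Hence
\[
-\mu_t\la\nabla f(\X_t),\S_t\ra\le-\tfrac14\mu_t\|\S_t\|_{\fro}^2+\mu_t\|\N_t-\nabla f(\X_t)\|_{\fro}^2 .
\]

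Combining the pieces, we use $\mu_t=\theta t^{-1/2}\le\theta$ for $t\ge1$ to write $\mu_t^2\le\theta\mu_t$. The second-order coefficient is then absorbed:
\[
\tfrac{C_f\ddot{k}+\dot{k}^2L}{2}\mu_t^2\|\S_t\|_{\fro}^2-\tfrac14\mu_t\|\S_t\|_{\fro}^2\le c_2\mu_t\|\S_t\|_{\fro}^2 .
\]
Subtracting $f(\bar{\X})$ and taking total expectation, using that $\mu_t$ is deterministic, yields $\ZZ_{t+1}-\ZZ_t\le c_2\mu_t\FF_t+\mu_t\EE_t$.

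There is no genuine obstacle, since the argument parallels Lemma~\ref{prop:C}. The only points needing care are confirming that the Young split produces exactly the constant $\tfrac14$ in $c_2$, and that the bound $\mu_t\le\theta$ holds from $t=1$ onward. Both hold because $\beta_1=1$ gives $\mu_1=\theta$.
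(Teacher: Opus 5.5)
Your proposal is correct and follows essentially the paper's route: the paper proves Lemma~\ref{prop:F} by rerunning the argument of Lemma~\ref{prop:C} with $\M_t,\gamma_t,\CC_t,\DD_t$ replaced by $\N_t,\mu_t,\EE_t,\FF_t$. That argument uses the descent inequality, Lemma~\ref{lemma:M12}(i)--(ii) for the retraction terms, Lemma~\ref{lemma:GD:bound} with the bound $ax-\tfrac14x^2\le a^2$ for the inner product, and $\mu_t\le\theta$ to absorb the second-order term, which is exactly what you do.
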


\begin{lemma}[Recursive bound  of $\EE_t$, proof in \ref{app:prop:G}]\label{prop:G}
	Let $\KKK_t \triangleq L^2\mu_{t-1}^2\dot{k}^2\tfrac{\FF_{t-1}}{\beta_t} + \beta_t^2\sigma^2$. We have the following results for all $t\geq 1$: $$\EE_t \leq 	(1 - \beta_t)\EE_{t-1} + \KKK_t.$$
\end{lemma}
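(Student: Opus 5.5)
We need to prove Lemma~\ref{prop:G}: $\EE_t \le (1-\beta_t)\EE_{t-1} + \KKK_t$. The plan is to expand the EMA error recursively, isolate a conditionally mean-zero sampling term, and split the remaining drift with a weighted Young inequality.

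\textbf{Decomposing the error.} Write $\N_t-\nabla f(\X_t)$ using the update $\N_t=\beta_t\nabla f(\X_t;\xi_t)+(1-\beta_t)\N_{t-1}$:
\[
\N_t-\nabla f(\X_t)=(1-\beta_t)\bigl(\N_{t-1}-\nabla f(\X_{t-1})\bigr)+(1-\beta_t)\bigl(\nabla f(\X_{t-1})-\nabla f(\X_t)\bigr)+\beta_t\bigl(\nabla f(\X_t;\xi_t)-\nabla f(\X_t)\bigr).
\]
Call the three terms $\mathbf{a}$, $\mathbf{d}$ and $\mathbf{e}$. The quantities $\mathbf{a}$ and $\mathbf{d}$ are $\mathcal{H}_{t-1}$-measurable, since $\X_t$ is determined by $\xi_{t-1}$ and earlier samples. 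By Assumption~\ref{ass:sigma} and the subsequent remark, $\EEE_t[\mathbf{e}]=\zero$. The cross terms between $\mathbf{e}$ and $\mathbf{a}+\mathbf{d}$ therefore vanish under the conditional expectation, and we get
\[
\EEE_t\|\N_t-\nabla f(\X_t)\|_\fro^2=\|\mathbf{a}+\mathbf{d}\|_\fro^2+\beta_t^2\EEE_t\|\nabla f(\X_t;\xi_t)-\nabla f(\X_t)\|_\fro^2 .
\]
The second term is at most $\beta_t^2\sigma^2$.

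\textbf{Young's inequality with weight $\beta_t$.} Apply $\|\mathbf{a}+\mathbf{d}\|^2\le(1+\beta_t)\|\mathbf{a}\|^2+(1+1/\beta_t)\|\mathbf{d}\|^2$ and then use two elementary bounds. First, $(1+\beta_t)(1-\beta_t)^2\le 1-\beta_t$. Second, $(1+1/\beta_t)(1-\beta_t)^2=(1-\beta_t)^2(1+\beta_t)/\beta_t\le 1/\beta_t$, because $(1-\beta_t)(1+\beta_t)\le1$ and $1-\beta_t\le1$. Hence
\[
\|\mathbf{a}+\mathbf{d}\|^2\le(1-\beta_t)\|\N_{t-1}-\nabla f(\X_{t-1})\|^2+\tfrac{1}{\beta_t}\|\nabla f(\X_t)-\nabla f(\X_{t-1})\|^2 .
\]

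\textbf{Drift bound and conclusion.} Bound the drift term by Assumption~\ref{ass:fx} via Jensen's inequality, which gives $L$-smoothness of $f$, and then by Lemma~\ref{lemma:M12}(i) applied to the step $\X_t=\Retr_{\X_{t-1}}(-\mu_{t-1}\S_{t-1})$:
\[
\|\nabla f(\X_t)-\nabla f(\X_{t-1})\|^2\le L^2\dot{k}^2\mu_{t-1}^2\|\S_{t-1}\|^2 .
\]
Taking total expectation turns $\|\S_{t-1}\|^2$ into $\FF_{t-1}$, and $\|\N_{t-1}-\nabla f(\X_{t-1})\|^2$ into $\EE_{t-1}$, so the claimed bound follows.

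\textbf{Edge case and main obstacle.} At $t=1$ we have $\beta_1=1$, so the factor $1-\beta_1$ vanishes and the recursion still holds; we interpret $\X_0=\X_1$ and $\mu_0=\theta$ as in the initialization of Algorithm~\ref{alg:RGR-SRM-EMA}. The only delicate point is justifying that the cross term vanishes, which requires $\X_t$ and $\N_{t-1}$ to be $\mathcal{H}_{t-1}$-measurable while $\xi_t$ is independent of $\mathcal{H}_{t-1}$. This should follow directly from the algorithm's sampling scheme.
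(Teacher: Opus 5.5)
Your proposal is correct and follows the paper's proof essentially step for step. It uses the same three-term decomposition of $\N_t-\nabla f(\X_t)$, the same Young inequality with weight $\beta_t$ (together with $(1+\beta_t)(1-\beta_t)^2\le 1-\beta_t$ and $(1+1/\beta_t)(1-\beta_t)^2\le 1/\beta_t$), and the same drift bound via $L$-smoothness plus Lemma~\ref{lemma:M12}(i). Your argument that the cross term vanishes, via $\mathcal{H}_{t-1}$-measurability and $\EEE_t[\nabla f(\X_t;\xi_t)-\nabla f(\X_t)]=\zero$, is slightly more careful than the paper's, which only invokes the unconditional mean-zero property.
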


\noi Finally, we provide the iteration complexity of {\sf RGR-EMA}.

\begin{theorem}[Iteration complexity, proof in \ref{app:the:H}]\label{the:H}
	Letting the sequence $\{\X_t\}$ be generated by {\sf RGR-EMA} and $\delta > 0$, we define $c_4 \triangleq - c_2 - 4 \theta^2 L^2\dot{k}^2(1+\delta)$. If there exist a constant $\delta$ and a sufficiently small $\theta$ such that $c_4>0$, then {\sf RGR-EMA} attains an iteration complexity of $\tilde{\OO}(\varepsilon^{-4})$ to find an $\varepsilon$-critical point. Let $\tau$ be sampled uniformly from $\{1,\ldots,T\}$, independently
	of the algorithmic randomness. Then
	$$\ts\EEE[\|\operatorname{grad}f(\X_\tau)\|_{\fro}]	= \ts\tfrac{1}{T}\sum_{t=1}^{T} \EEE\left[\|\operatorname{grad}f(\X_{t})\|_{\fro}\right]  = \tilde{\OO}(T^{-1/4}).$$
\end{theorem}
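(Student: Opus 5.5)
We prove Theorem~\ref{the:H} by combining Lemmas~\ref{prop:F} and~\ref{prop:G} through a weighted potential function, following the template of the {\sf RGR-SRM} analysis. Consider
\[
\Phi_t \triangleq \ZZ_t + \lambda_t \EE_{t-1},\qquad \lambda_t \triangleq (1+\delta)\tfrac{\mu_{t-1}}{\beta_{t}} = (1+\delta)\theta\sqrt{t/(t-1)}\approx(1+\delta)\theta .
\]
The weight is calibrated so that the term $\mu_t\EE_t$ produced by Lemma~\ref{prop:F} is absorbed by the contraction $-\beta_t\EE_{t-1}$ from Lemma~\ref{prop:G}. Since $\mu_t=\theta\beta_t$, it is in fact simpler to take a constant weight $\lambda = (1+\delta)\theta$, i.e.\ to study $\Phi_t=\ZZ_t+\lambda\EE_t$.

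\textbf{Step 1 (one-step decrease).} Lemma~\ref{prop:G} at index $t+1$ gives
\[
\lambda\EE_{t+1}-\lambda\EE_t\le -\lambda\beta_{t+1}\EE_t+\lambda\KKK_{t+1}.
\]
Adding the bound of Lemma~\ref{prop:F} yields
\[
\Phi_{t+1}-\Phi_t\le c_2\mu_t\FF_t+(\mu_t-\lambda\beta_{t+1})\EE_t+\lambda L^2\dot k^2\mu_t^2\FF_t/\beta_{t+1}+\lambda\beta_{t+1}^2\sigma^2 .
\]
We have $\mu_t-\lambda\beta_{t+1}=\theta\beta_t-(1+\delta)\theta\beta_{t+1}\le 0$ as soon as $\sqrt{(t+1)/t}\le 1+\delta$, which holds for all $t\ge t_0(\delta)$; the finitely many initial indices contribute only a constant. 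For the $\FF_t$ coefficient, $\mu_t^2/\beta_{t+1}=\theta^2\beta_t^2/\beta_{t+1}\le\sqrt2\,\theta\mu_t$, so the coefficient is at most $\mu_t(c_2+\sqrt2(1+\delta)\theta^2L^2\dot k^2)$. This is $\le -c_4\mu_t$ with the stated $c_4$; the constant $4$ in $c_4$ leaves room for looser bookkeeping. The noise term is $\lambda\sigma^2/(t+1)$.

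\textbf{Step 2 (telescoping).} Summing over $t=1,\dots,T$ and using $\Phi_{T+1}\ge0$ (because $\ZZ_t\ge0$ and $\EE_t\ge0$), together with $\Phi_1\le\ZZ_1+\lambda\EE_1$, where $\EE_1\le 2C_f^2$ since $\N_0=\zero$, gives
\[
c_4\ts\sum_{t=1}^T\mu_t\FF_t\le \Phi_1+O(1)+\lambda\sigma^2\ts\sum_{t}\tfrac1{t+1}=O(\log T).
\]
Keeping a fraction of the $\EE_t$ coefficient (shrink $\delta$ slightly, or use $\lambda=(1+2\delta)\theta$) similarly yields $\sum_t\beta_t\EE_t=O(\log T)$. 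Since $\mu_t\ge\theta T^{-1/2}$ for $t\le T$, we get
\[
\ts\sum_{t=1}^T\FF_t=O(\sqrt T\log T)\qquad\text{and}\qquad \ts\sum_{t=1}^T\EE_t=O(\sqrt T\log T).
\]

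\textbf{Step 3 (relating to the Riemannian gradient).} The map $\G\mapsto\G-\X\G^\top\X$ is linear with operator norm at most $2$ on $\mathrm{St}(n,r)$. Hence
\[
\|\operatorname{grad}f(\X_t)\|_{\fro}\le\|\S_t\|_{\fro}+2\|\N_t-\nabla f(\X_t)\|_{\fro},
\qquad\text{so}\qquad
\EEE\|\operatorname{grad}f(\X_t)\|_{\fro}^2\le 2\FF_t+8\EE_t .
\]
By Jensen and Cauchy--Schwarz,
\[
\tfrac1T\ts\sum_t\EEE\|\operatorname{grad}f(\X_t)\|_{\fro}\le\Big(\tfrac1T\sum_t(2\FF_t+8\EE_t)\Big)^{1/2}=O\big(T^{-1/4}\sqrt{\log T}\big)=\tilde\OO(T^{-1/4}).
\]
Finally, uniform sampling of $\tau$ turns the average into $\EEE\|\operatorname{grad}f(\X_\tau)\|_{\fro}$, and solving $\tilde\OO(T^{-1/4})\le\varepsilon$ gives $T=\tilde\OO(\varepsilon^{-4})$.

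\textbf{Main obstacle.} The delicate part is Step 1: the potential weight must make the $\EE_t$ coefficient nonpositive (the ratio $\beta_t/\beta_{t+1}$ is what forces the factor $1+\delta$), while the $\FF_t$ penalty coming from $\KKK_{t+1}$ must stay below $-c_2$. The index shift between $\FF_{t-1}$ in $\KKK_t$ and $\FF_t$ in Lemma~\ref{prop:F} also requires care, and I would handle the first few iterations separately as a constant.
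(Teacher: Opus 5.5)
Your proposal is correct and follows essentially the paper's argument. Both proofs use a constant-weight potential $\ZZ_t+\lambda\EE_t$ built from Lemmas~\ref{prop:F} and~\ref{prop:G}, telescope it using $\mu_t\ge\theta T^{-1/2}$, and finish with Cauchy--Schwarz via $\EEE\|\operatorname{grad}f(\X_t)\|_{\fro}^2\le 2\FF_t+8\EE_t$.

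The only difference is bookkeeping. The paper handles the ratio $\beta_t/\beta_{t+1}$ through Lemma~\ref{lemma:A3} (a factor $2$ valid for all $t\ge1$), which gives weight $2\theta(1+\delta)$ and no burn-in. Your tighter weight $(1+2\delta)\theta$ instead needs a finite burn-in $t<t_0(\delta)$; this is harmless, since $\|\N_t\|_{\fro}\le C_f$ gives $\EE_t\le 4C_f^2$.
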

\begin{remark}\rm{}
	
We make the following remarks. \textbf{(}\bfit{i}\textbf{)} Theorems~\ref{the:E} and~\ref{the:H} establish iteration complexity guarantees for two mini-batch and horizon-free  methods, {\sf RGR-SRM} and {\sf RGR-EMA}, in the online setting on Stiefel manifolds. Their convergence guarantees do not require prior knowledge of the total iteration horizon $T$. \textbf{(}\bfit{ii}\textbf{)} {\sf RGR-SRM} and {\sf RGR-EMA} are mini-batch stochastic Riemannian methods for Problem~\eqref{eq:main}. Theorems~\ref{the:E} and~\ref{the:H} show that their iteration complexity guarantees remain valid with a constant mini-batch size $b=\OO(1)$; hence, $b$ need not scale with either the target tolerance $\varepsilon$ or the iteration horizon $T$. This contrasts with existing analyses that require increasing batch sizes. In~\cite{liuserena}, {\sf R-Hybrid-SGD} and {\sf R-SRM} use $b=\OO(T^{1/3})$, whereas {\sf R-SRVRG}, {\sf R-SVRRM}, {\sf R-SVRG}, and {\sf R-PAGE} use $b=\OO(\varepsilon^{-2})$. {\sf R-SPIDER}~\cite{zhou2019faster} uses a batch size of $\OO(\varepsilon^{-2})$ at periodic refresh iterations and $\OO(\varepsilon^{-1})$ otherwise. In~\cite{han2021improved}, {\sf R-AbaSRG} and {\sf R-SRG} require $b=\OO(\varepsilon^{-1})$, whereas {\sf R-AbaSVRG} and {\sf R-SVRG} require $b=\OO(\varepsilon^{-4/3})$. Moreover, {\sf R-PAGE}~\cite{li2021page} uses a batch size of $\OO(\varepsilon^{-2})$ during its probabilistic refresh steps.
\end{remark}

\section{Experiments}\label{sec:exp}

This section evaluates {\sf RGR-SPIDER}, {\sf RGR-SRM}, and {\sf RGR-EMA} on three representative applications: PCA, LRMC, and OCNNs. To isolate the contribution of recursive gradient estimation, we include an ablation variant, termed {\sf RGR-vanilla}, that replaces the recursive estimator with a standard stochastic gradient estimator. We further compare recursive estimators constructed in the ambient Euclidean space with their counterparts constructed directly in Riemannian tangent spaces. This comparison examines whether Euclidean-space recursive estimation offers practical advantages while avoiding explicit vector transport. For the PCA and LRMC experiments, we use real-world datasets obtained from the LIBSVM repository.\footnote{\url{https://www.csie.ntu.edu.tw/~cjlin/libsvm/}} We adopt the notation \textsf{name}-$N$-$n$ for a dataset subset containing $N$ instances and $n$ features randomly sampled from the original dataset. Specifically, we consider TDT2-$9394$-$500$, 20News-$10000$-$500$, Sector-$6412$-$500$, Mushroom-$8124$-$112$, CIFAR-$10000$-$500$, Gisette-$6000$-$500$, MNIST-$5000$-$500$, and TEXT1-$1946$-$500$. For OCNNs experiments, we use two datasets: CIFAR-10 and MNIST.

\subsection{{\sf RGR-SPIDER} on Finite-Sum PCA}\label{sec:pcafinite}

Let $\A\in\mathbb{R}^{N\times n}$ be a centered data matrix, with
$\A_i\in\mathbb{R}^{1\times n}$ denoting its $i$-th row. PCA seeks an
orthonormal basis $\X\in\mathrm{St}(n,r)$ for an $r$-dimensional subspace
that minimizes the average reconstruction error:
\begin{equation}\label{eq:pca}
	\min_{\X\in\mathrm{St}(n,r)}f(\X) \triangleq
	\ts  \frac{1}{N}\sum_{i=1}^{N}f_i(\X), \qquad f_i(\X) \triangleq \frac{1}{2} \left\|\A_i-\A_i\X\X^\top \right\|_{2}^{2},
\end{equation}
which has the finite-sum structure specified in Problem~\eqref{eq:main}.

$\bullet$ \textbf{Compared Methods}. We compare {\sf RGR-SPIDER} with five representative baseline methods and one ablation variant: \textbf{(\bfit{i})} {\sf R-SGD}~\cite{bonnabel2013stochastic}; \textbf{(\bfit{ii})} {\sf R-SVRG}~\cite{han2021improved}; \textbf{(\bfit{iii})} {\sf R-SPIDER}~\cite{zhou2019faster}; \textbf{(\bfit{iv})} {\sf R-SRVRG}~\cite{liuserena}; \textbf{(\bfit{v})} {\sf R-SVRRM}~\cite{liuserena}; and \textbf{(\bfit{vi})} {\sf RGR-vanilla}.

$\bullet$ \textbf{Experimental Settings}. We set $r=\operatorname{round}(N/2)$ and $b=\operatorname{round}(\sqrt{N})$. For {\sf RGR-SPIDER}, {\sf RGR-vanilla}, and {\sf R-SPIDER}, we set $q=\operatorname{round}(\sqrt{N})$, whereas for {\sf R-SVRG}, {\sf R-SRVRG}, and {\sf R-SVRRM}, we set the number of inner-loop iterations to $100$. We use the QR-decomposition retraction $\Retr_{\X}^{\mathrm{QR}}(\Deltas) = \qf(\X+\Deltas)$  for all methods except {\sf R-SGD}, for which we use the exponential map. All baseline methods except {\sf R-SPIDER} use a fixed stepsize of \(\tau=10^{4}\), with \(\tau=10^{3}\) used for the Gisette dataset. For {\sf R-SPIDER}, we employ the adaptive stepsize $\tau_t = \max(\tau,\ \dist(\X_{t-1},\X_t))$, where $\dist(\X_{t-1},\X_t)$ denotes the distance between two consecutive iterates. For {\sf R-SRVRG} and {\sf R-SVRRM}, the parameter $\beta$ introduced in~\cite{liuserena} is set to $0.5$. For {\sf RGR-SPIDER} and {\sf RGR-vanilla}, we set the initial stepsize bound to $\etaup=2\tau$. All methods are implemented in MATLAB and executed on a computer equipped with a 3.4-GHz Intel CPU and 32 GB of RAM.

$\bullet$ \textbf{Experimental Results}. Figure~\ref{fig:pcafinite} compares {\sf RGR-SPIDER} with the baseline methods. Two main observations can be made. First, {\sf RGR-SPIDER} converges faster than the compared methods that construct recursive gradient estimators directly in Riemannian tangent spaces. Second, {\sf RGR-SPIDER} consistently outperforms its non-recursive ablation, {\sf RGR-vanilla}, demonstrating the practical benefit of incorporating the {\sf SPIDER}.

\subsection{{\sf RGR-SPIDER} on Finite-Sum LRMC}\label{sec:lrmcfinite}

We next consider LRMC with partially observed data. Let
$\A\in\mathbb{R}^{N\times n}$ be a partially observed matrix and let
$\Omega_i$ denote the set of observed column indices in its $i$-th row.
The operator $\mathcal{P}_{\Omega_i}(\cdot)$ retains the entries indexed
by $\Omega_i$ and sets all remaining entries to zero. For a fixed auxiliary
factor $\Y\in\mathbb{R}^{N\times r}$, the LRMC subproblem with respect to
the Stiefel-constrained factor $\X$ is formulated as
\begin{equation}\label{eq:lrmc}
	\min_{\X\in\mathrm{St}(n,r)}
	f(\X)
	\triangleq
	\ts \frac{1}{N}\sum_{i=1}^{N}f_i(\X),
	\qquad
	f_i(\X)
	\triangleq
	\frac{1}{2}
	\left\|
	\mathcal{P}_{\Omega_i}
	\left(\A_i-\Y_i\X^\top\right)
	\right\|_{2}^{2},
\end{equation}
where $\A_i\in\mathbb{R}^{1\times n}$ and
$\Y_i\in\mathbb{R}^{1\times r}$ denote the $i$-th rows of $\A$ and $\Y$,
respectively. For any fixed $\Y$, Problem~\eqref{eq:lrmc} has the
finite-sum structure specified in Problem~\eqref{eq:main}.

$\bullet$ \textbf{Experimental Settings}. For each LRMC instance, we generate an observation mask by sampling entries independently according to a prescribed observation rate $\rho_{\mathrm{obs}}<0.2$. The resulting mask is held fixed throughout each run. We employ an alternating update scheme: the compared Riemannian methods update the Stiefel-constrained factor $\X$, whereas standard Euclidean gradient descent with stepsize $1$ updates the unconstrained factor $\Y$. We otherwise use the same baseline methods and hyperparameter settings as in Section~\ref{sec:pcafinite}.

$\bullet$ \textbf{Experimental Results}. Figure~\ref{fig:lrmcfinite} presents the results for finite-sum LRMC. Consistent with the PCA experiments, {\sf RGR-SPIDER} converges faster than the tangent-space recursive baselines and outperforms {\sf RGR-vanilla}, supporting the effectiveness of Euclidean-space recursive gradient estimation across different Stiefel-constrained problems.

\subsection{{\sf RGR-SRM} and {\sf RGR-EMA} on Stochastic PCA}
\label{sec:pcaonline}

We consider a stochastic mini-batch formulation of PCA. Let
$\xi\subseteq\{1,\ldots,N\}$ denote a mini-batch of data indices sampled
uniformly from the empirical distribution. The stochastic PCA problem is
formulated as
\begin{equation}\label{eq:pca:stoc}
	\min_{\X\in\mathrm{St}(n,r)}
	f(\X)
	\triangleq
	\EEE_{\xi}\!\left[f(\X;\xi)\right],
	\qquad
	f(\X;\xi)
	\triangleq
	\ts\frac{1}{2|\xi|}
	\ts\sum_{i\in\xi}
	\left\|
	\A_i-\A_i\X\X^\top
	\right\|_{2}^{2}.
\end{equation}
Under uniform mini-batch sampling, $f(\X;\xi)$ is an unbiased stochastic
estimator of the finite-sum PCA objective. Therefore,
Problem~\eqref{eq:pca:stoc} has the stochastic optimization structure
specified in Problem~\eqref{eq:main}.

$\bullet$ \textbf{Experimental Settings}. For all baseline methods, we set the initial stepsize to $\tau_1=10^{4}$, except on Gisette, for which we use $\tau_1=10^{3}$. For $t\geq2$, the baseline stepsize is updated according to $\tau_t=\tau_{t-1}-10^{-4}\tau_1.$ For {\sf R-SPIDER}, we employ the adaptive stepsize $\tau_t = \max(\tau_{t-1}-10^{-4}\tau_1,\ \dist(\X_{t-1},\X_t))$. The parameter $\beta$ used by the methods in~\cite{liuserena} is set to $0.5$. For {\sf RGR-SRM}, {\sf RGR-EMA}, and {\sf RGR-vanilla}, we set the stepsize parameter to $\theta=10\tau_1$. Unless otherwise specified, the datasets, initialization procedure, and computational environment are the same as those described in Section~\ref{sec:pcafinite}.

$\bullet$ \textbf{Experimental Results}. Figure~\ref{fig:pcaonline} presents the results for stochastic PCA. Two main observations can be made. First, {\sf RGR-SRM} and {\sf RGR-EMA} converge faster than the compared methods that construct recursive gradient estimators directly in Riemannian tangent spaces. Second, both methods outperform {\sf RGR-vanilla} on most datasets, demonstrating the practical benefits of incorporating the {\sf SRM} and {\sf EMA} estimators into the proposed retraction method.

\subsection{{\sf RGR-SRM} and {\sf RGR-EMA} on Stochastic LRMC}
\label{sec:lrmconline}

We next consider a stochastic mini-batch formulation of LRMC. Let
$\xi\subseteq\{1,\ldots,N\}$ denote a mini-batch sampled
uniformly from the observed data. For a fixed auxiliary factor
$\Y\in\mathbb{R}^{N\times r}$, the stochastic LRMC subproblem with respect
to $\X$ is formulated as
\begin{equation}\label{eq:lrmc:stoc}
	\min_{\X\in\mathrm{St}(n,r)}
	f(\X)
	\triangleq
	\EEE_{\xi}\!\left[f(\X;\xi)\right],
	\qquad
	f(\X;\xi)
	\triangleq
	\ts\frac{1}{2|\xi|}
	\ts\sum_{i\in\xi}
	\left\|
	\mathcal{P}_{\Omega_i}
	\left(\A_i-\Y_i\X^\top\right)
	\right\|_{2}^{2}.
\end{equation}
For any fixed $\Y$, $f(\X;\xi)$ is an unbiased stochastic estimator of
the finite-sum LRMC objective with respect to $\X$. Hence,
Problem~\eqref{eq:lrmc:stoc} has the stochastic optimization structure
specified in Problem~\eqref{eq:main}. In the experiments, the complete
LRMC model is optimized using an alternating scheme in which the
Stiefel-constrained factor $\X$ and the unconstrained factor $\Y$ are
updated successively.

$\bullet$ \textbf{Experimental Settings}. The observation masks are generated by sampling entries independently according to prescribed observation rates below $0.2$ and are held fixed throughout each run. The Stiefel variable $\X$ is updated using {\sf RGR-SRM}, {\sf RGR-EMA}, or one of the compared Riemannian methods, whereas $\Y$ is updated by Euclidean gradient descent with stepsize $1$. We otherwise use the same compared methods and hyperparameter settings as in Section~\ref{sec:pcaonline}.

$\bullet$ \textbf{Experimental Results}. Figure~\ref{fig:lrmconline} reports the results for stochastic LRMC. Consistent with the PCA experiments, {\sf RGR-SRM} and {\sf RGR-EMA} converge faster than the tangent-space recursive baselines and generally outperform {\sf RGR-vanilla}. These results indicate that Euclidean-space recursive gradient estimation remains effective when combined with alternating updates for the LRMC model.

\subsection{{\sf RGR-SRM} and {\sf RGR-EMA} on Stochastic OCNNs}
\label{sec:cnnonline}

We evaluate {\sf RGR-SRM} and {\sf RGR-EMA} on the training of OCNNs. For a mini-batch $\xi$ sampled uniformly from the training set, the stochastic loss is defined as
\[
f(\X;\xi)
=
\ts\frac{1}{|\xi|}\sum_{i\in\xi}
\ell(\X;\A_i,\y_i),
\
f(\X)=\EEE_{\xi}[f(\X;\xi)],
\]
where $\ell$ denotes the cross-entropy loss and $\X$ collectively represents the constrained network weights. Because orthogonality is imposed on multiple layers, the feasible set is a product of Stiefel manifolds. In implementation, the tangent projection and retraction are applied separately to each constrained weight matrix. The resulting problem follows the stochastic optimization structure considered in Problem~\eqref{eq:main}.

$\bullet$ \textbf{Network Architecture}. The network consists of three convolutional blocks followed by two fully connected layers. For CIFAR-10 and MNIST, the numbers of input channels are set to $3$ and $1$, respectively. The three convolutional layers use $3\times3$ kernels and produce $32$, $64$, and $128$ output channels, respectively, with each convolution followed by batch normalization and a ReLU activation. The first two convolutional blocks additionally use max pooling, while adaptive average pooling reduces the final feature maps to a spatial resolution of $4\times4$. The resulting features are flattened into a $2048$-dimensional vector and passed through the first fully connected layer, which maps the feature dimension from $2048$ to $256$ and is followed by a ReLU activation. The second fully connected layer maps the resulting $256$-dimensional representation to $10$ output classes. Orthogonality constraints are imposed on all convolutional and fully connected weight matrices. These constrained weights are updated using the tested Retraction methods, whereas the bias and batch-normalization parameters remain unconstrained and are updated using {\sf Adam}.

$\bullet$ \textbf{Experimental Settings}. We train the networks on CIFAR-10 and MNIST using the cross-entropy loss and compare the same family of baseline methods as in Section~\ref{sec:pcafinite}. For the baseline methods, the initial stepsize is set to $\tau_1=0.05$ and is decreased additively by $10^{-4}$ after each epoch: $
\tau_t=\tau_1-(t-1)\times 10^{-4},
\ t=1,\ldots,200.
$
For {\sf RGR-SRM}, {\sf RGR-EMA}, and {\sf RGR-vanilla}, we set the stepsize parameter to $\theta=10\tau_1$. During the first epoch, each mini-batch contains one quarter of the training set, providing low-variance gradient estimates during the initial stage of optimization. From the second epoch onward, the mini-batch size is fixed at $128$. All methods are trained for $200$ epochs. The experiments are implemented in Python and conducted on an NVIDIA RTX 3090 Ti GPU with 24 GB of memory.

$\bullet$ \textbf{Experimental Results}. Figure~\ref{fig:ocnn} reports the training and test performance of the compared methods. Although {\sf RGR-SRM} and {\sf RGR-EMA} do not always attain the highest training accuracy, the proposed methods generally decrease the training loss faster and attain lower final loss values than the tangent-space recursive baselines.  On the test sets, the proposed methods achieve favorable loss and accuracy values.

\subsection{Comparison of Euclidean-Space and Riemannian Tangent-Space Recursive Estimation}\label{sec:Euclidean-tangent}

We further compare recursive gradient estimators constructed in the Euclidean space with those constructed directly in Riemannian tangent spaces. The comparison is conducted on both finite-sum and stochastic PCA problems to examine how the estimator construction space affects practical performance. The proposed {\sf RGR} variants construct the {\sf SPIDER}, {\sf SRM}, and {\sf EMA} estimators in the Euclidean space $\mathbb{R}^{n\times r}$ and subsequently map them to tangent directions for retraction updates. By contrast, Riemannian tangent-space methods, including {\sf R-SPIDER}~\cite{zhou2019faster} and {\sf RSRM}~\cite{ijcai2021p345}, update their recursive estimators directly in the tangent spaces associated with successive iterates and therefore require vector transport between successive tangent spaces. For comparison with {\sf RGR-EMA}, we additionally implement a tangent-space Riemannian exponential moving average method, denoted by {\sf REMA}, as detailed in Algorithm~\ref{alg:REMA} in Appendix \ref{appendixema}.

$\bullet$ \textbf{Experimental Settings}. We use the same datasets and experimental protocols as in the preceding finite-sum and stochastic PCA experiments. The parameter $\alpha$ for {\sf RSRM} and {\sf REMA} is set to $0.9$, following~\cite{ijcai2021p345}. To isolate the effect of the estimator construction space, {\sf R-SPIDER}, {\sf RSRM}, and {\sf REMA} use the same respective stepsize schedules as {\sf RGR-SPIDER}, {\sf RGR-SRM}, and {\sf RGR-EMA}.

$\bullet$ \textbf{Experimental Results}. Figures~\ref{fig:comparespider}, \ref{fig:comparesrm}, and \ref{fig:compareema} compare the Euclidean-space and tangent-space implementations. The Euclidean-space variants generally converge faster,  and attain lower final objective values in several cases. These results demonstrate the practical effectiveness of constructing recursive estimators in the Euclidean space before mapping them to tangent directions.

\section{Conclusion}\label{sec:conclusion}

In this paper, we proposed {\sf RGR}, a recursive gradient retraction method for stochastic nonconvex optimization on Stiefel manifolds. {\sf RGR} is the method on Stiefel manifolds that performs recursive stochastic gradient estimation in the Euclidean space and then maps the estimators to tangent directions, eliminating explicit vector transport. We developed three variants, {\sf RGR-SPIDER}, {\sf RGR-SRM}, and {\sf RGR-EMA}, and established their iteration complexity guarantees. In particular, {\sf RGR-SPIDER} is Lipschitz-free and horizon-free, while {\sf RGR-SRM} and {\sf RGR-EMA} are mini-batch and horizon-free methods whose convergence guarantees do not require prescribed large or specific batch sizes and whose parameters can be chosen without prior knowledge of the total
iteration budget. We establish that {\sf RGR-SPIDER} finds an $\varepsilon$-critical point in expectation within $\mathcal{O}(\varepsilon^{-2})$ iterations, requiring $\mathcal{O}(N+\sqrt{N}\varepsilon^{-2})$ component-gradient evaluations.  In the online setting, {\sf RGR-SRM} and {\sf RGR-EMA} attain iteration complexities of $\tilde{\mathcal{O}}(\varepsilon^{-3})$ and $\tilde{\mathcal{O}}(\varepsilon^{-4})$, respectively. Experiments on PCA, LRMC and OCNNs problems demonstrate the practical efficiency and scalability of the proposed methods, with faster empirical convergence than competing stochastic Riemannian methods.

\begin{figure}[htbp]
	\centering
	\begin{subfigure}{0.6\textwidth}
		\includegraphics[width=\textwidth]{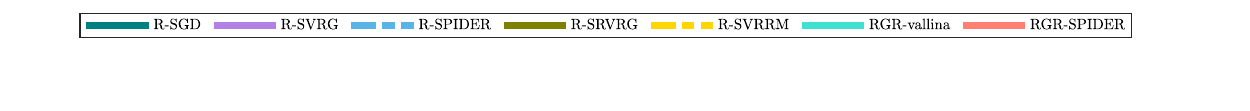}
	\end{subfigure}
	
	\begin{subfigure}{0.24\textwidth}
		\includegraphics[width=\textwidth]{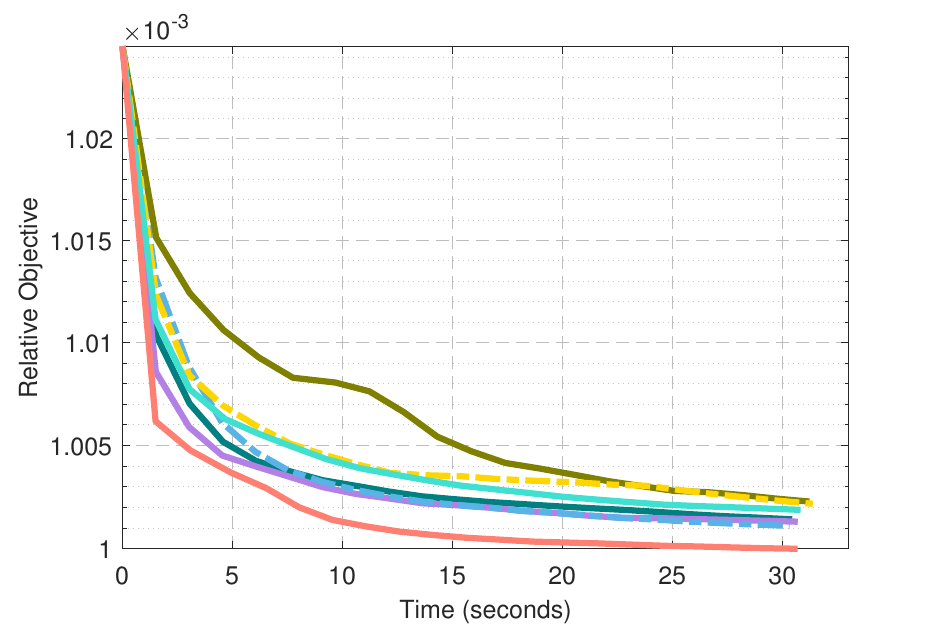}
		\caption{TDT2.}
	\end{subfigure}
	\begin{subfigure}{0.24\textwidth}
		\includegraphics[width=\textwidth]{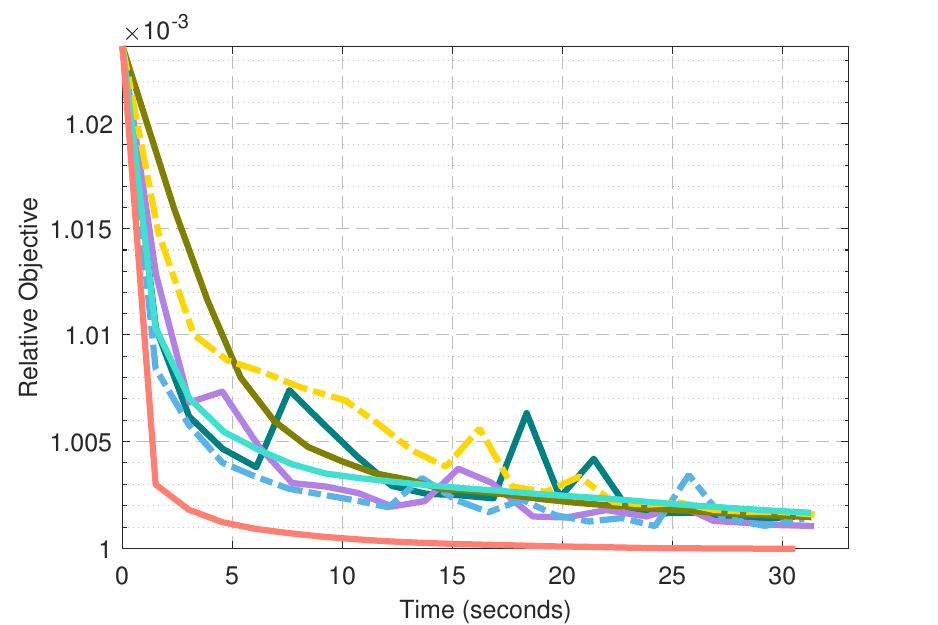}
		\caption{20News.}
	\end{subfigure}
	\begin{subfigure}{0.24\textwidth}
		\includegraphics[width=\textwidth]{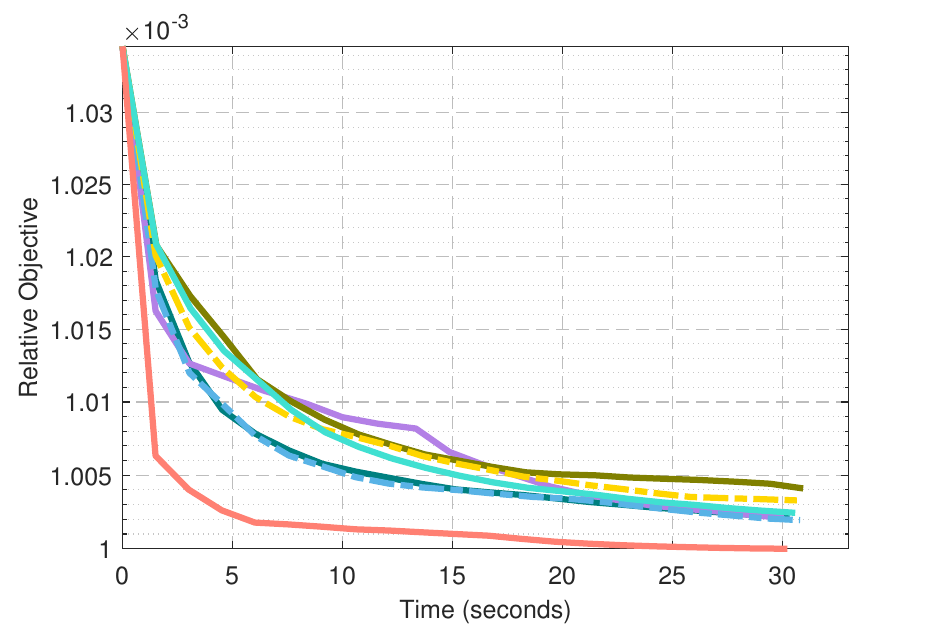}
		\caption{Sector.}
	\end{subfigure}
	\begin{subfigure}{0.24\textwidth}
		\includegraphics[width=\textwidth]{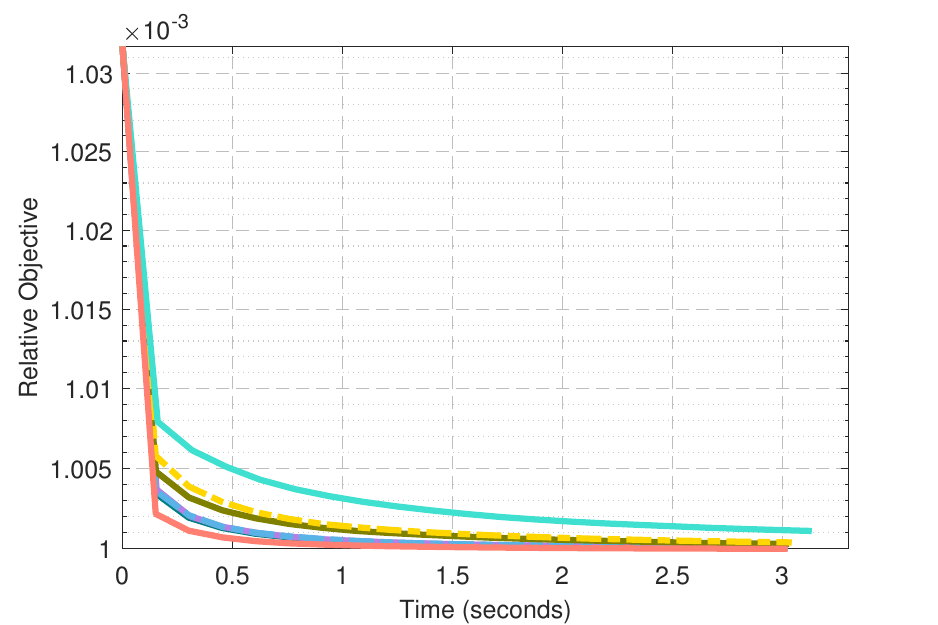}
		\caption{mushroom.}
	\end{subfigure}
	
	\begin{subfigure}{0.24\textwidth}
		\includegraphics[width=\textwidth]{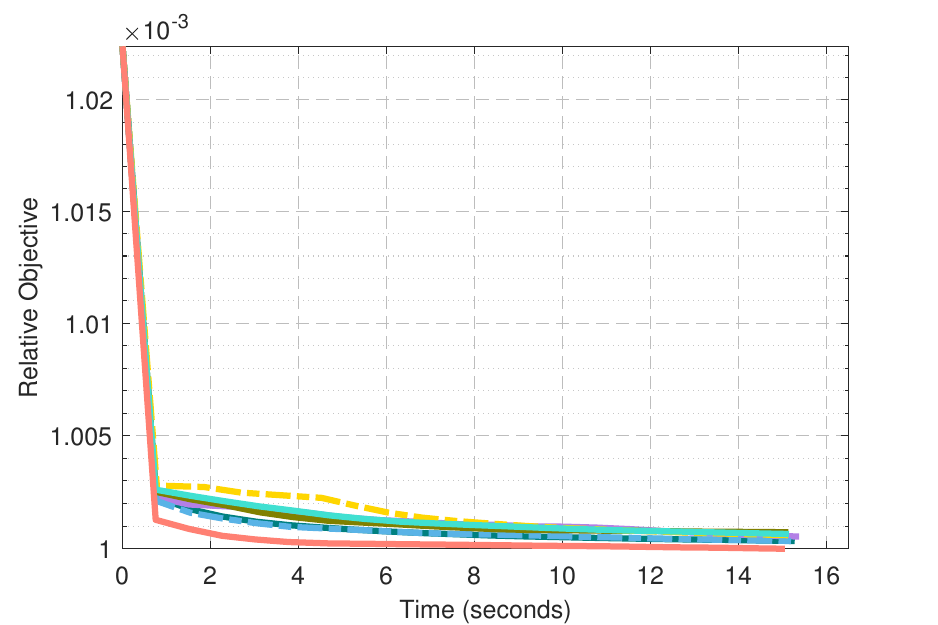}
		\caption{Cifar.}
	\end{subfigure}
	\begin{subfigure}{0.24\textwidth}
		\includegraphics[width=\textwidth]{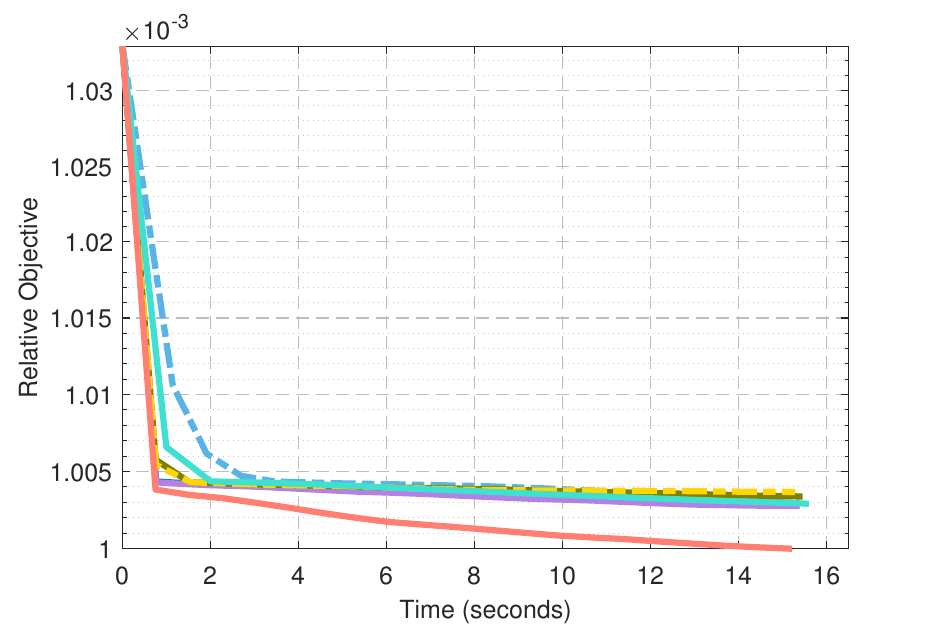}
		\caption{Gisette.}
	\end{subfigure}
	\begin{subfigure}{0.24\textwidth}
		\includegraphics[width=\textwidth]{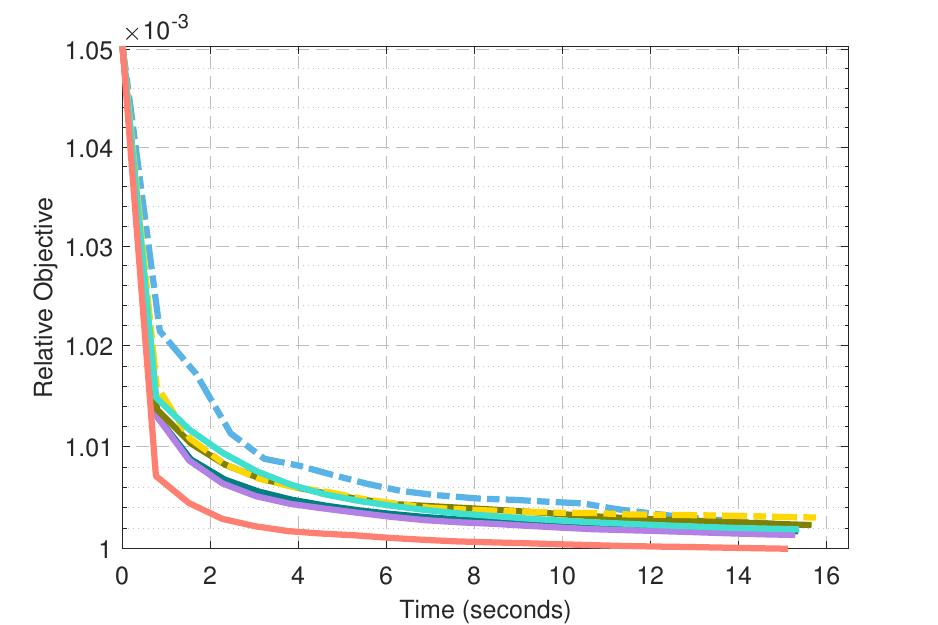}
		\caption{MNIST.}
	\end{subfigure}
	\begin{subfigure}{0.24\textwidth}
		\includegraphics[width=\textwidth]{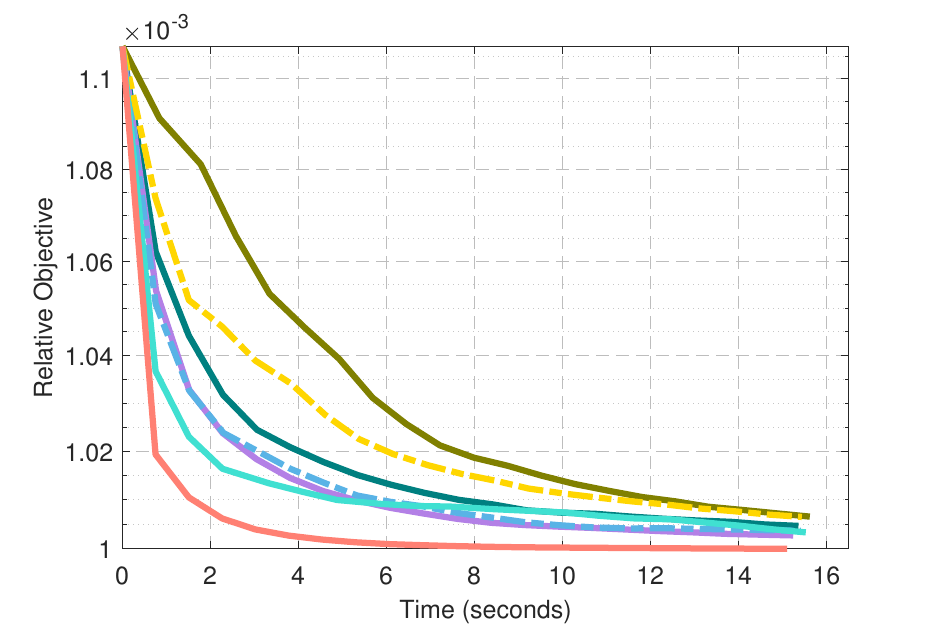}
		\caption{TEXT1.}
	\end{subfigure}
	\caption{Convergence curves of the compared methods for the PCA problem in the finite-sum setting.}
	\label{fig:pcafinite}
\end{figure}

\begin{figure}[htbp]
	\centering
	\begin{subfigure}{0.6\textwidth}
		\includegraphics[width=\textwidth]{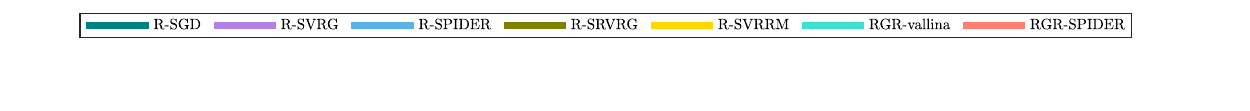}
	\end{subfigure}
	
	\begin{subfigure}{0.24\textwidth}
		\includegraphics[width=\textwidth]{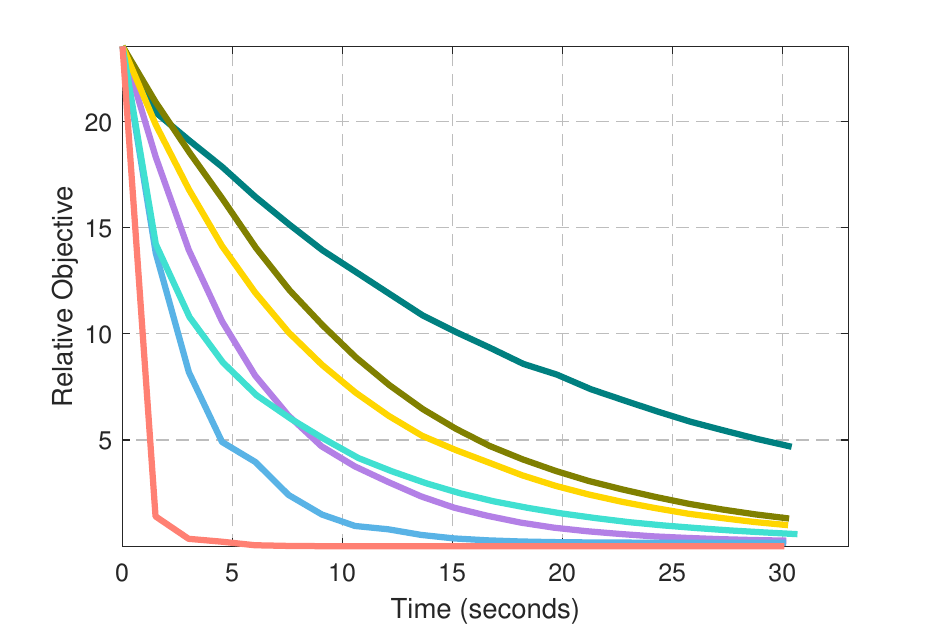}
		\caption{TDT2.}
	\end{subfigure}
	\begin{subfigure}{0.24\textwidth}
		\includegraphics[width=\textwidth]{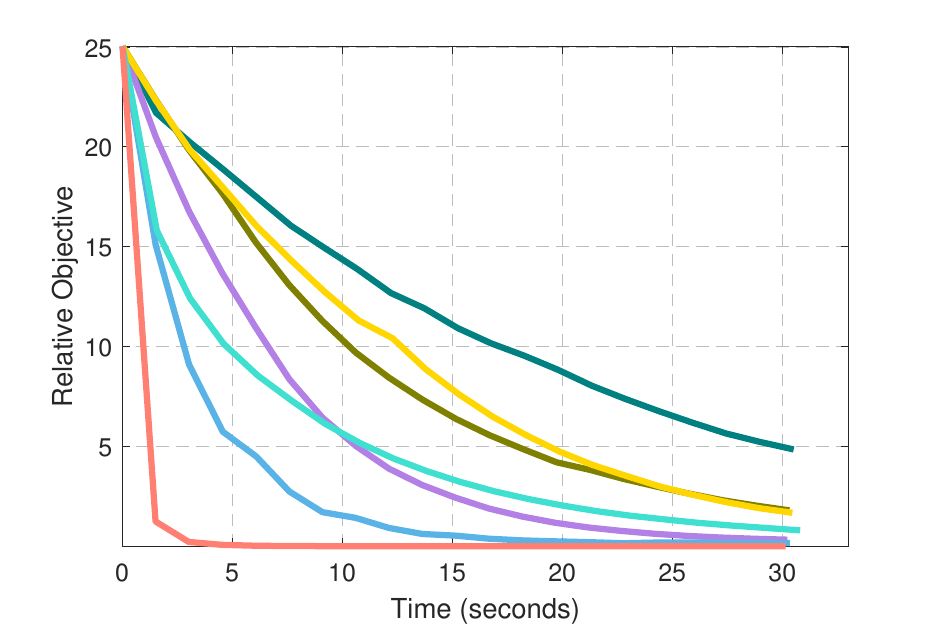}
		\caption{20News.}
	\end{subfigure}
	\begin{subfigure}{0.24\textwidth}
		\includegraphics[width=\textwidth]{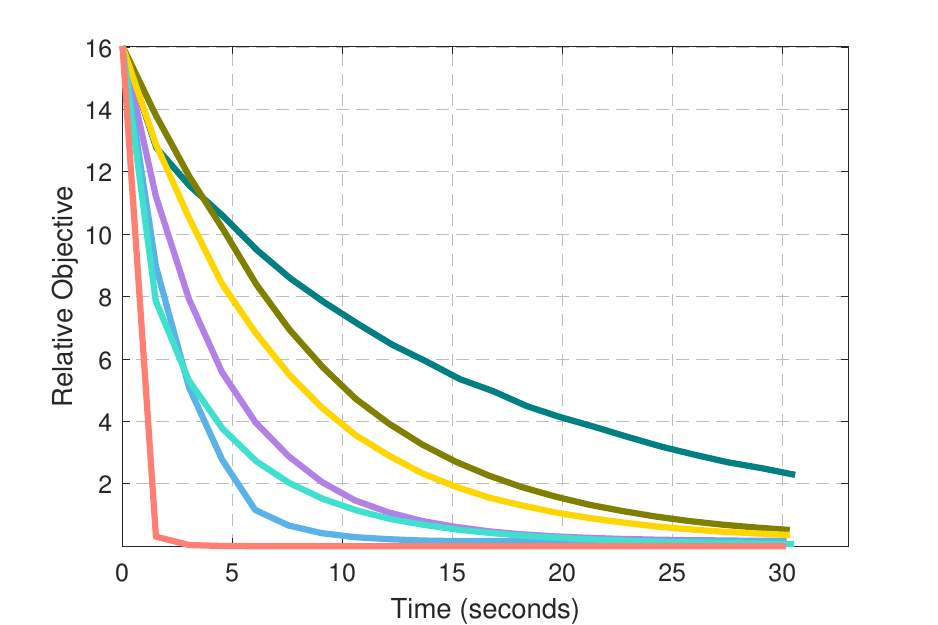}
		\caption{Sector.}
	\end{subfigure}
	\begin{subfigure}{0.24\textwidth}
		\includegraphics[width=\textwidth]{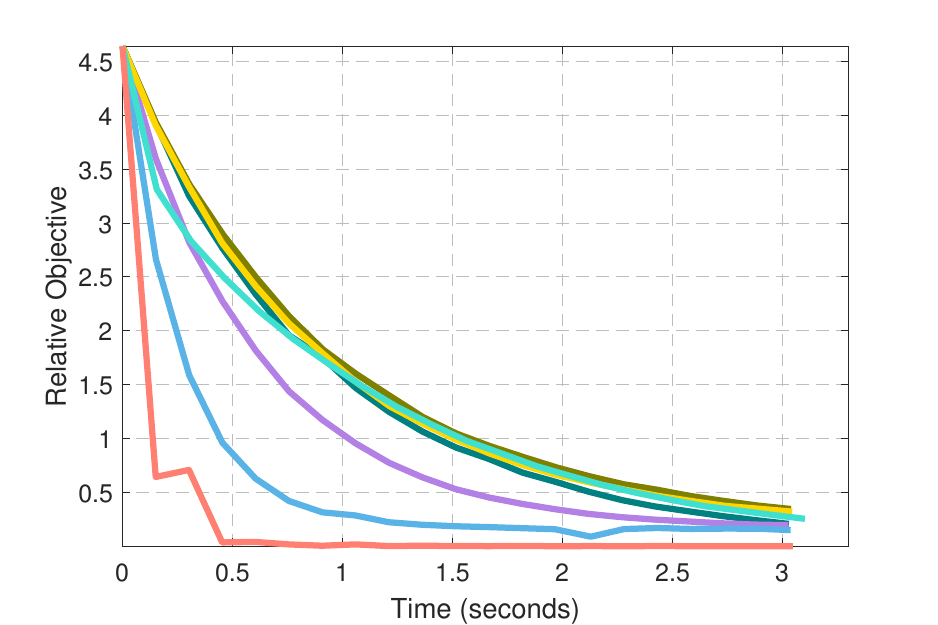}
		\caption{mushroom.}
	\end{subfigure}
	
	\begin{subfigure}{0.24\textwidth}
		\includegraphics[width=\textwidth]{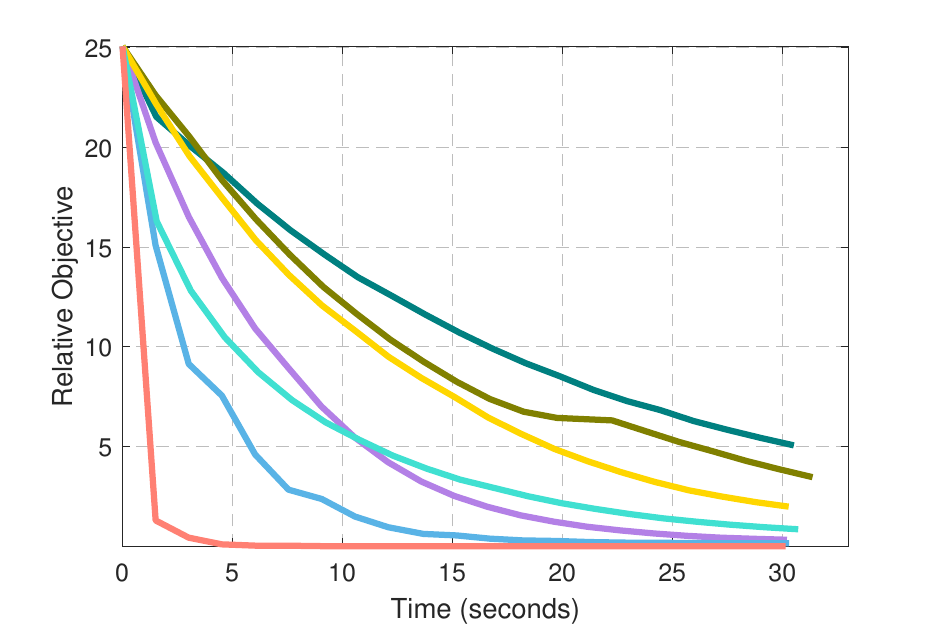}
		\caption{Cifar.}
	\end{subfigure}
	\begin{subfigure}{0.24\textwidth}
		\includegraphics[width=\textwidth]{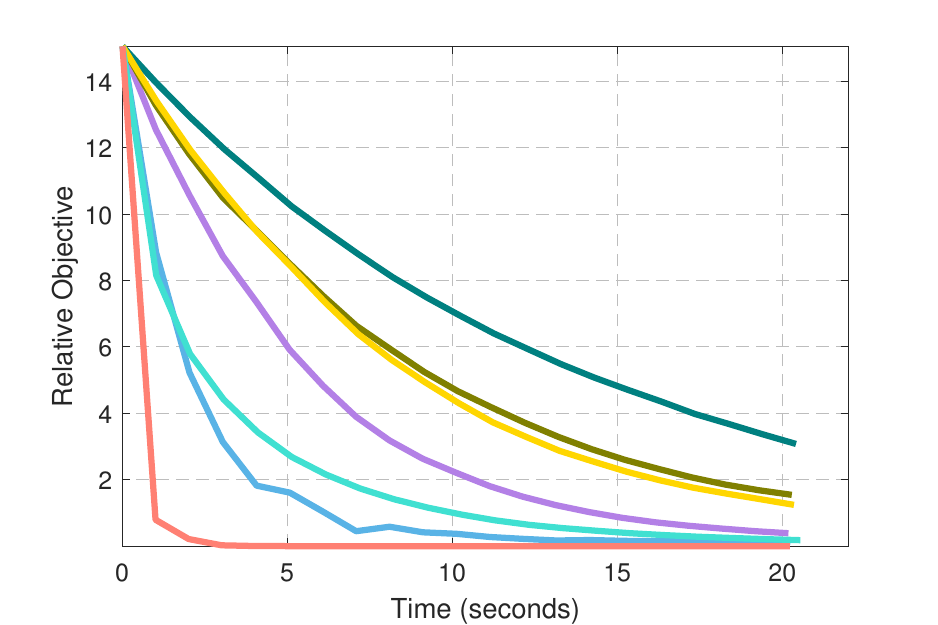}
		\caption{Gisette.}
	\end{subfigure}
	\begin{subfigure}{0.24\textwidth}
		\includegraphics[width=\textwidth]{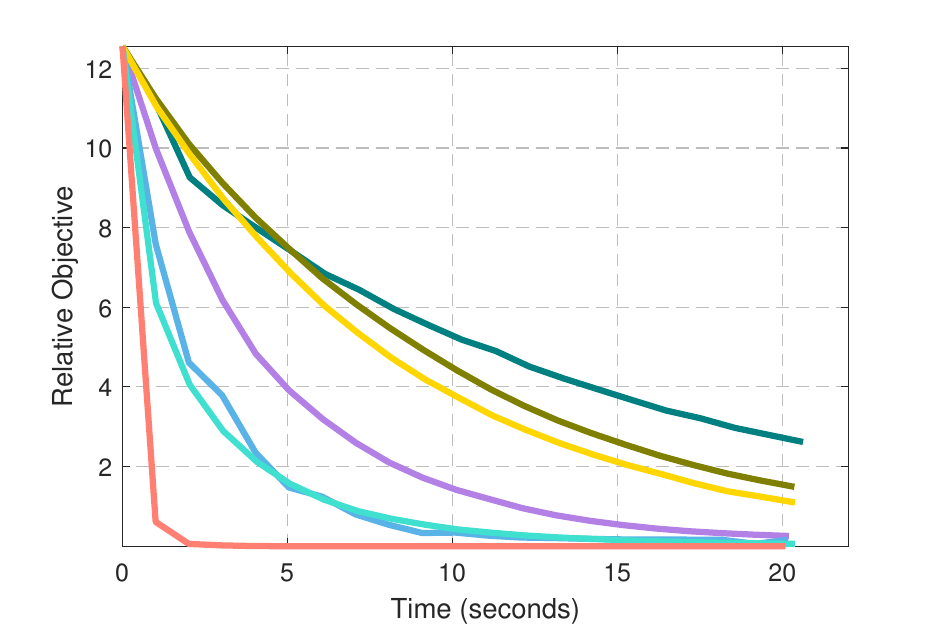}
		\caption{MNIST.}
	\end{subfigure}
	\begin{subfigure}{0.24\textwidth}
		\includegraphics[width=\textwidth]{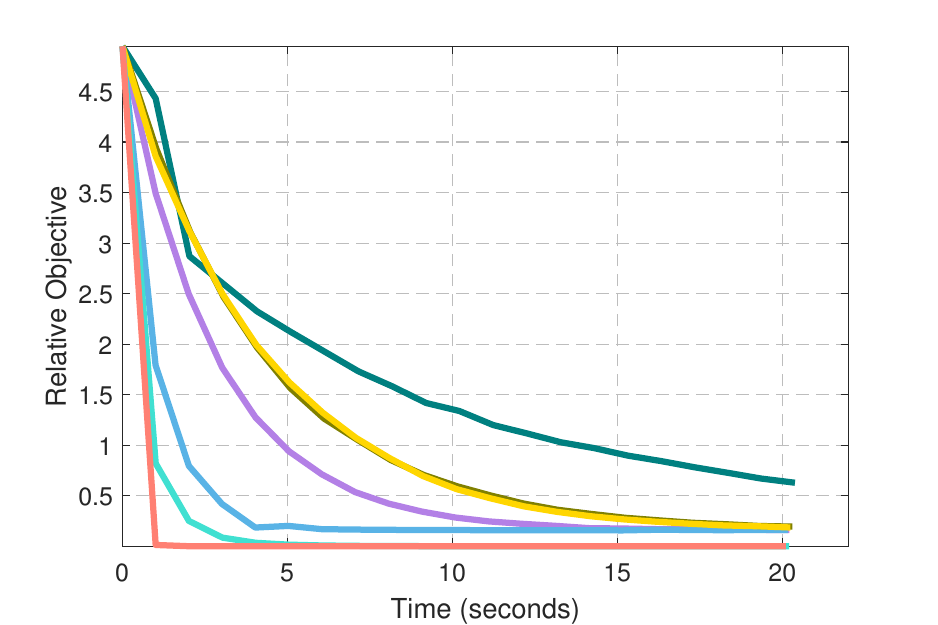}
		\caption{TEXT1.}
	\end{subfigure}
	\caption{Convergence curves of the compared methods for the LRMC problem in the finite-sum setting.}
	\label{fig:lrmcfinite}
\end{figure}

\begin{figure}[htbp]
	\centering
	\begin{subfigure}{0.7\textwidth}
		\includegraphics[width=\textwidth]{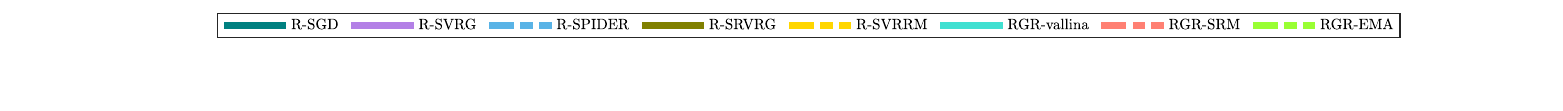}
	\end{subfigure}
	
	\begin{subfigure}{0.24\textwidth}
		\includegraphics[width=\textwidth]{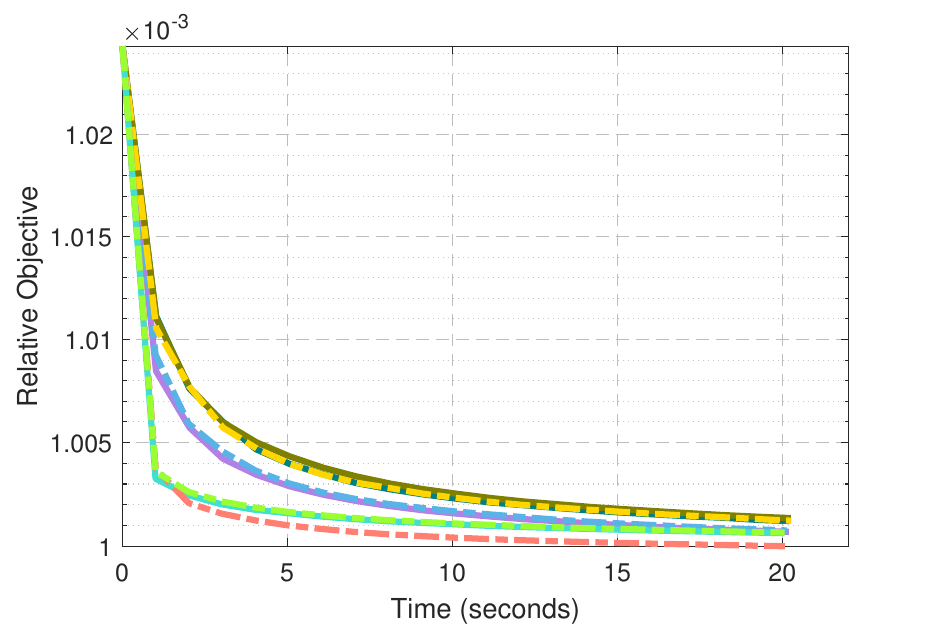}
		\caption{TDT2.}
	\end{subfigure}
	\begin{subfigure}{0.24\textwidth}
		\includegraphics[width=\textwidth]{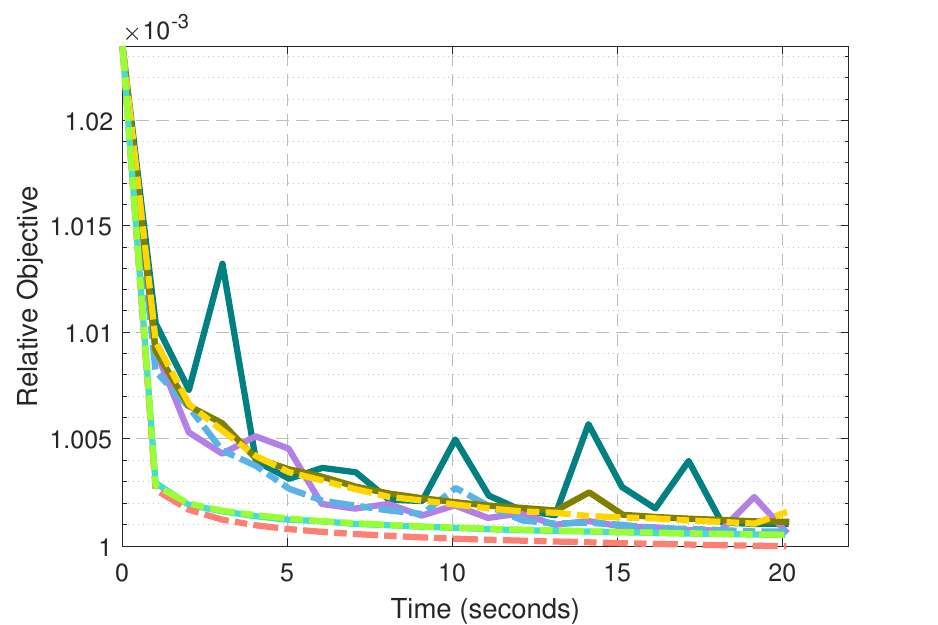}
		\caption{20News.}
	\end{subfigure}
	\begin{subfigure}{0.24\textwidth}
		\includegraphics[width=\textwidth]{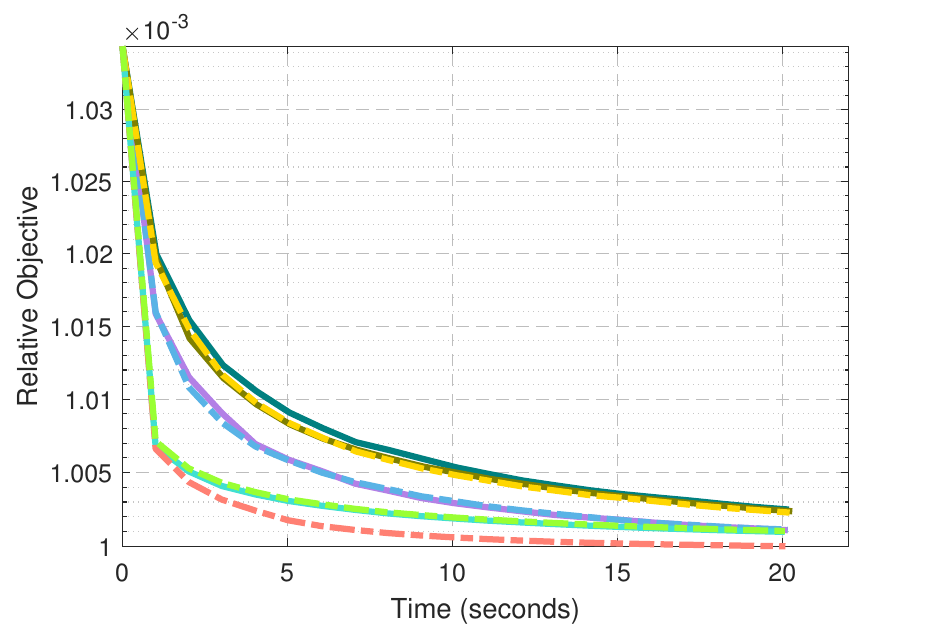}
		\caption{Sector.}
	\end{subfigure}
	\begin{subfigure}{0.24\textwidth}
		\includegraphics[width=\textwidth]{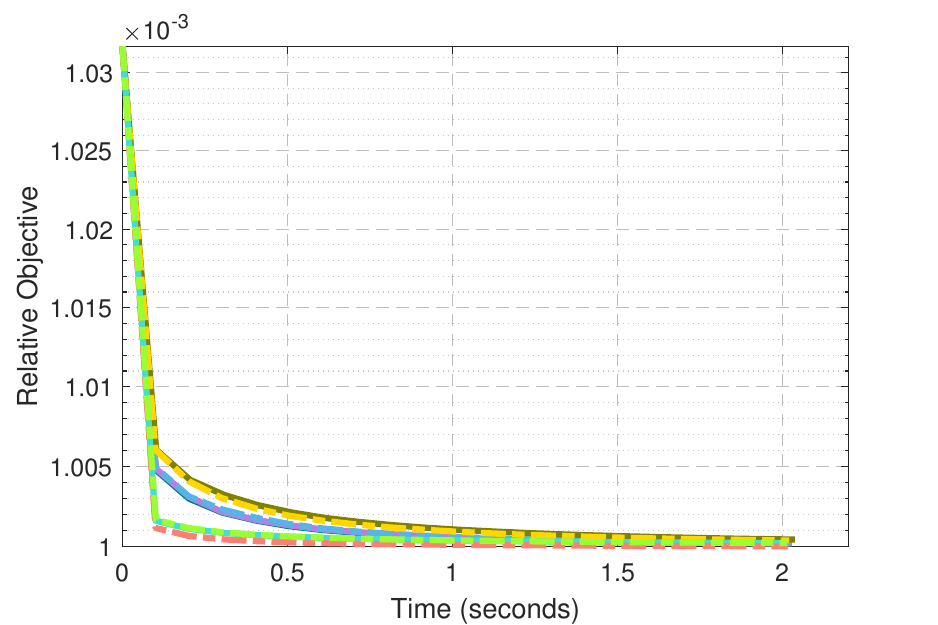}
		\caption{mushroom.}
	\end{subfigure}
	
	\begin{subfigure}{0.24\textwidth}
		\includegraphics[width=\textwidth]{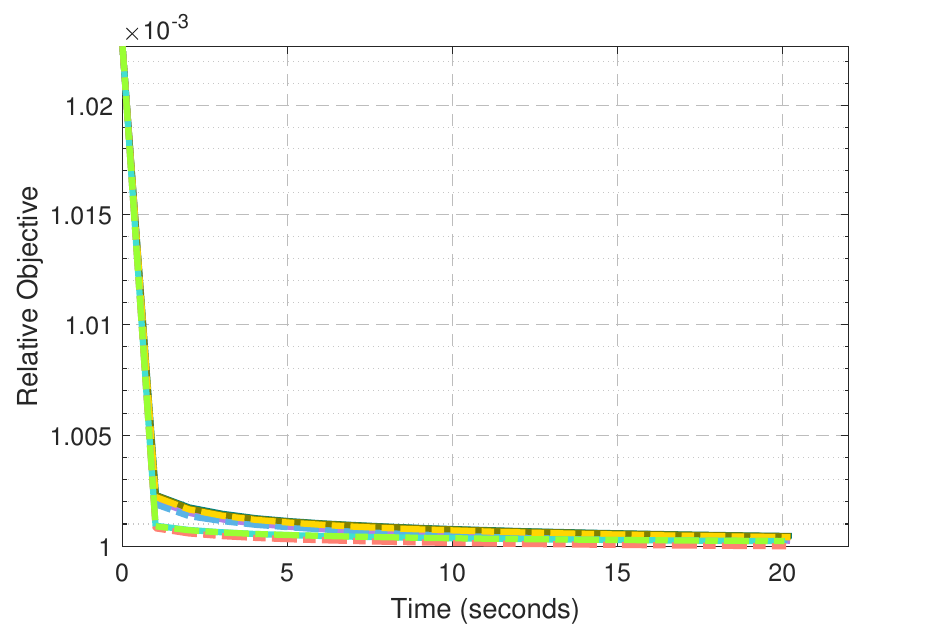}
		\caption{Cifar.}
	\end{subfigure}
	\begin{subfigure}{0.24\textwidth}
		\includegraphics[width=\textwidth]{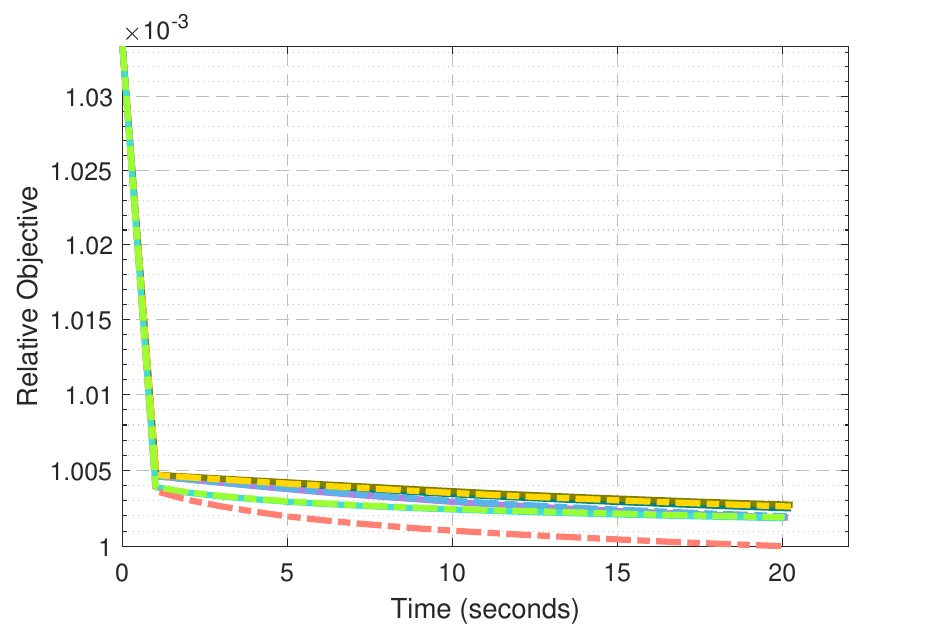}
		\caption{Gisette.}
	\end{subfigure}
	\begin{subfigure}{0.24\textwidth}
		\includegraphics[width=\textwidth]{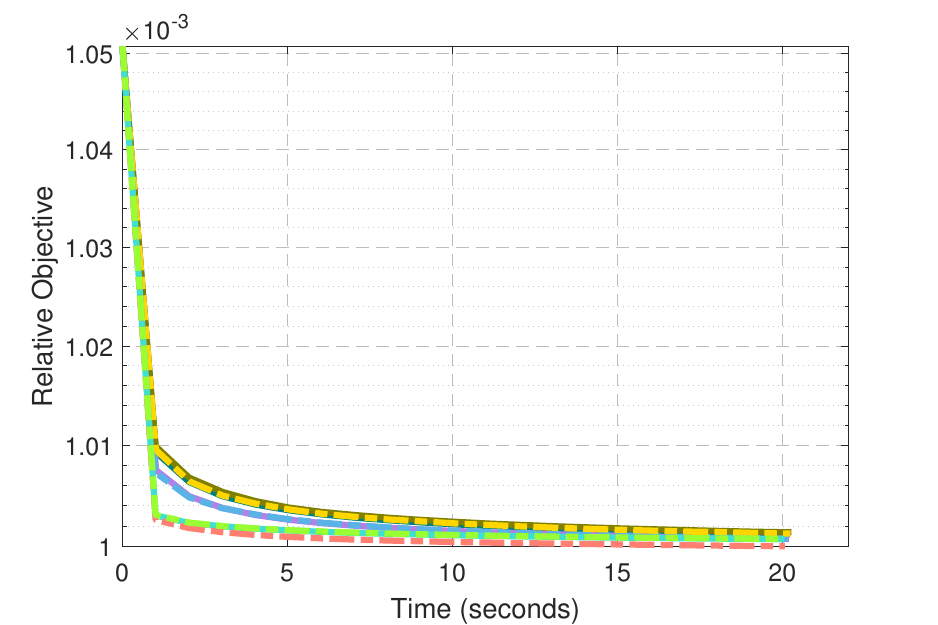}
		\caption{MNIST.}
	\end{subfigure}
	\begin{subfigure}{0.24\textwidth}
		\includegraphics[width=\textwidth]{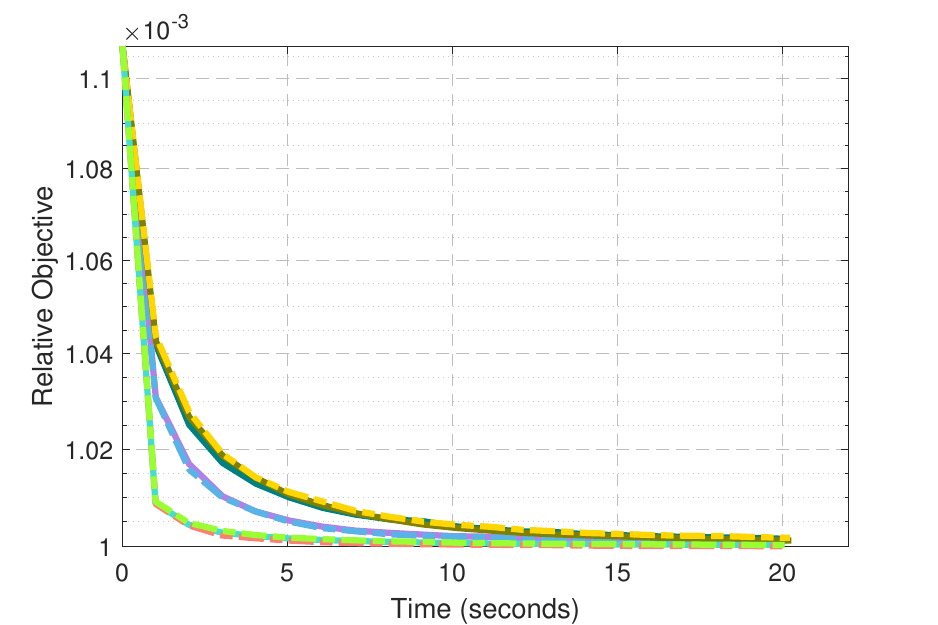}
		\caption{TEXT1.}
	\end{subfigure}
	\caption{Convergence curves of the compared methods for the PCA problem in the online setting.}
	\label{fig:pcaonline}
\end{figure}

\begin{figure}[htbp]
	\centering
	\centering
	\begin{subfigure}{0.7\textwidth}
		\includegraphics[width=\textwidth]{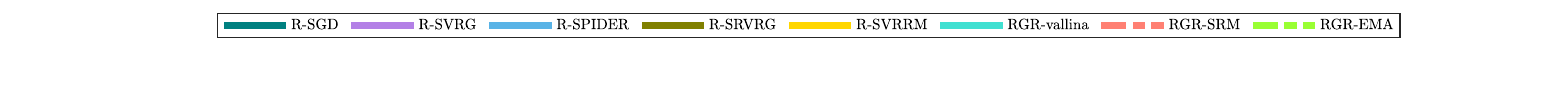}
	\end{subfigure}
	
	\begin{subfigure}{0.24\textwidth}
		\includegraphics[width=\textwidth]{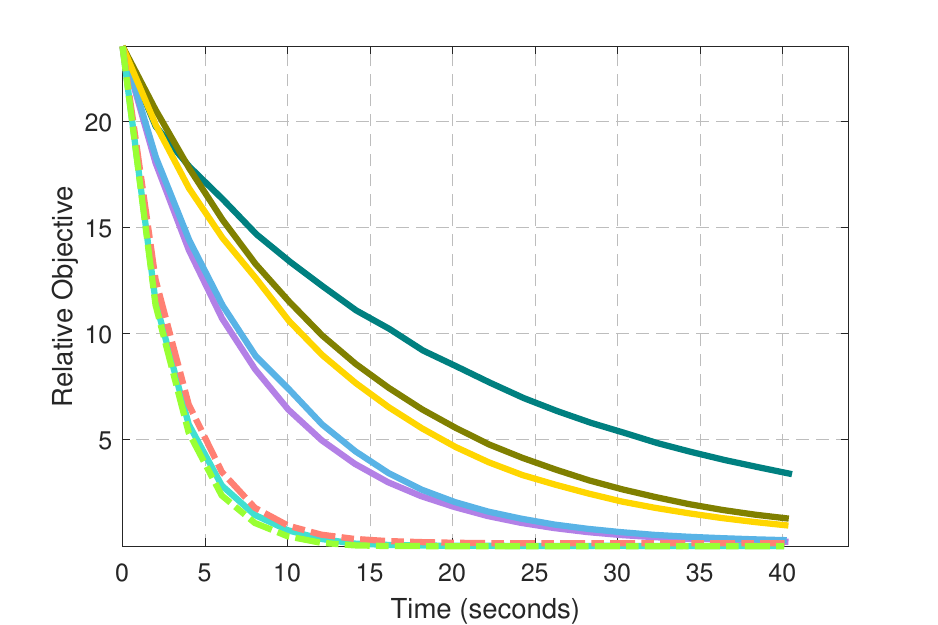}
		\caption{TDT2.}
	\end{subfigure}
	\begin{subfigure}{0.24\textwidth}
		\includegraphics[width=\textwidth]{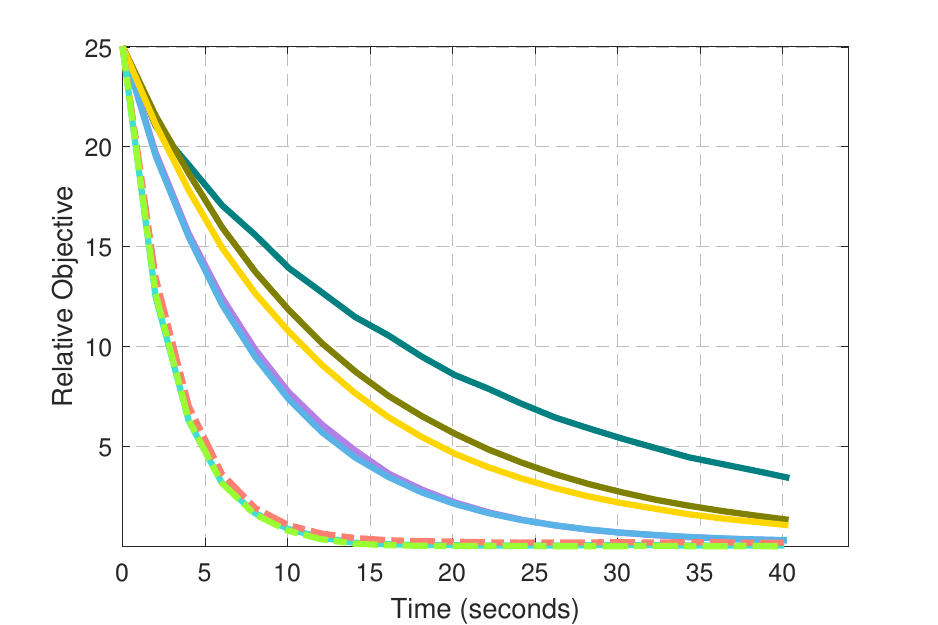}
		\caption{20News.}
	\end{subfigure}
	\begin{subfigure}{0.24\textwidth}
		\includegraphics[width=\textwidth]{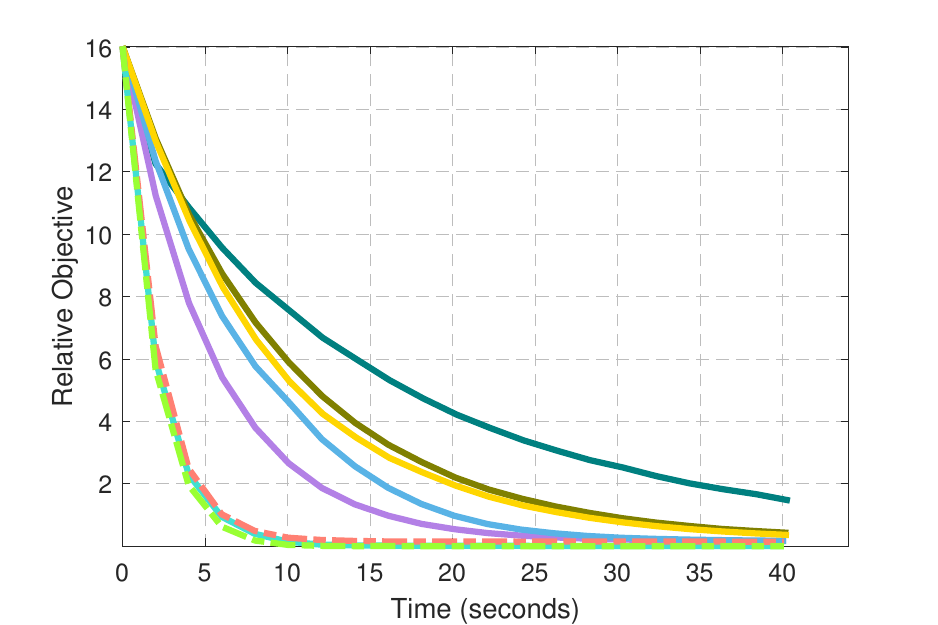}
		\caption{Sector.}
	\end{subfigure}
	\begin{subfigure}{0.24\textwidth}
		\includegraphics[width=\textwidth]{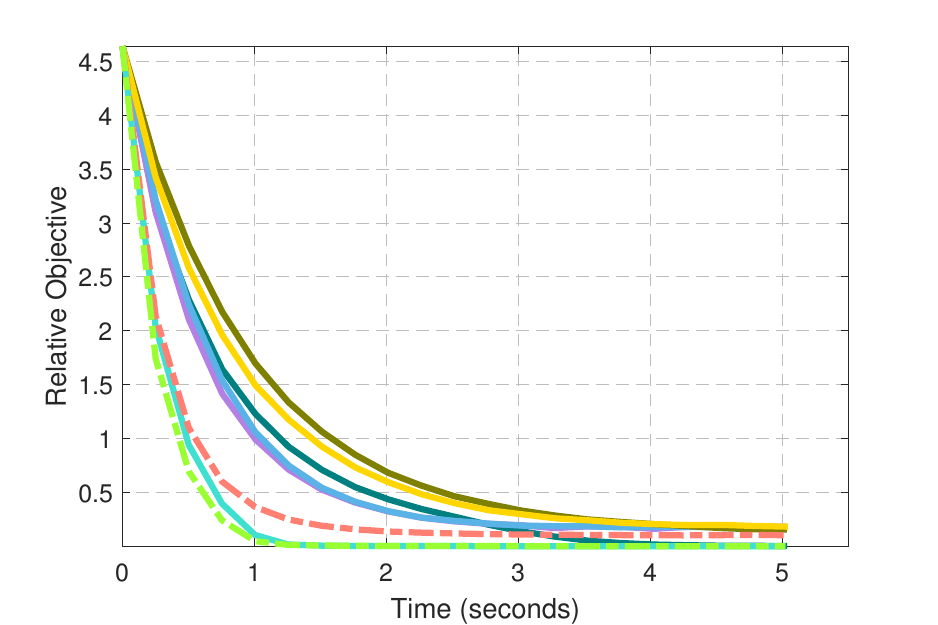}
		\caption{mushroom.}
	\end{subfigure}
	
	\begin{subfigure}{0.24\textwidth}
		\includegraphics[width=\textwidth]{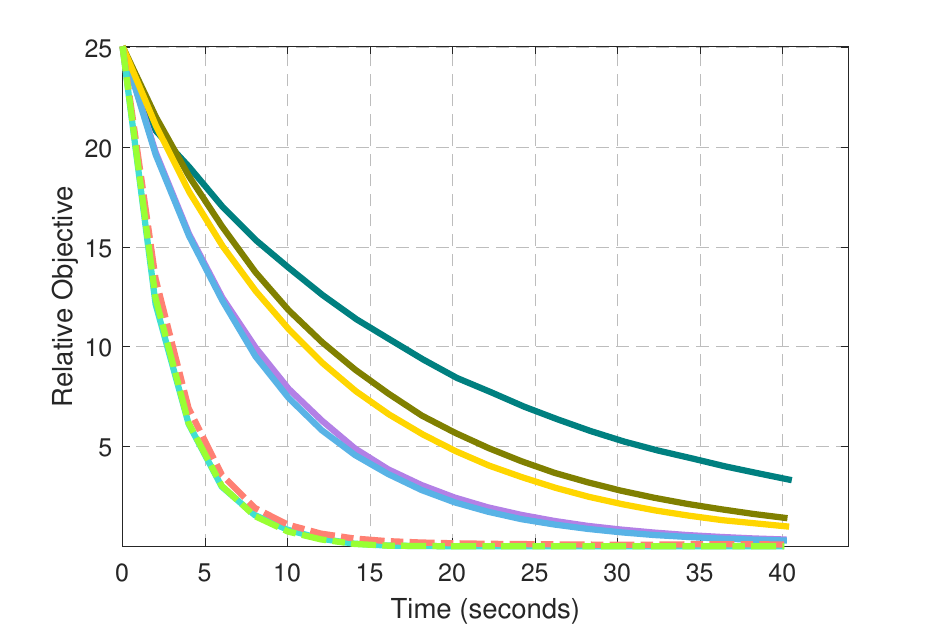}
		\caption{Cifar.}
	\end{subfigure}
	\begin{subfigure}{0.24\textwidth}
		\includegraphics[width=\textwidth]{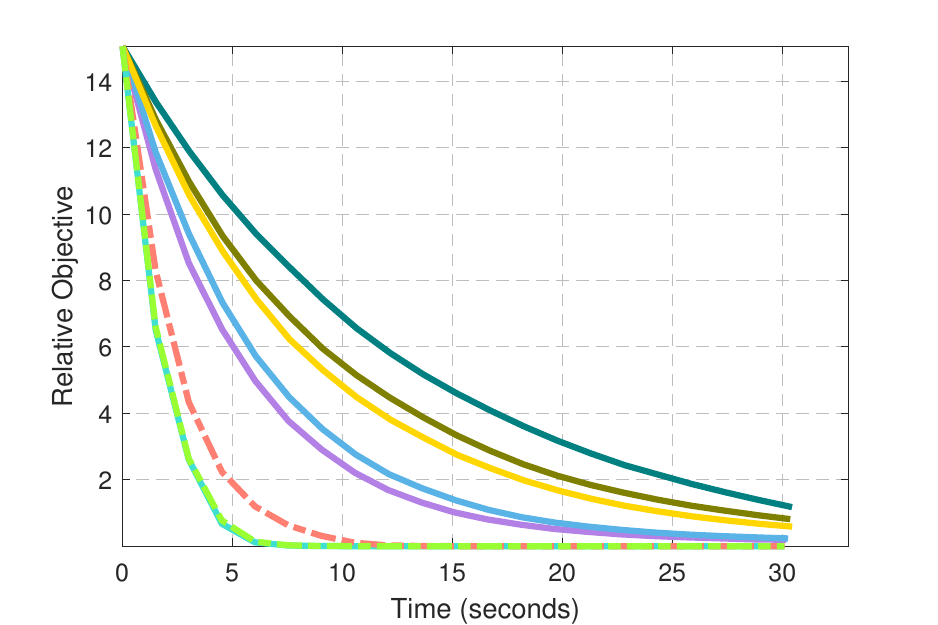}
		\caption{Gisette.}
	\end{subfigure}
	\begin{subfigure}{0.24\textwidth}
		\includegraphics[width=\textwidth]{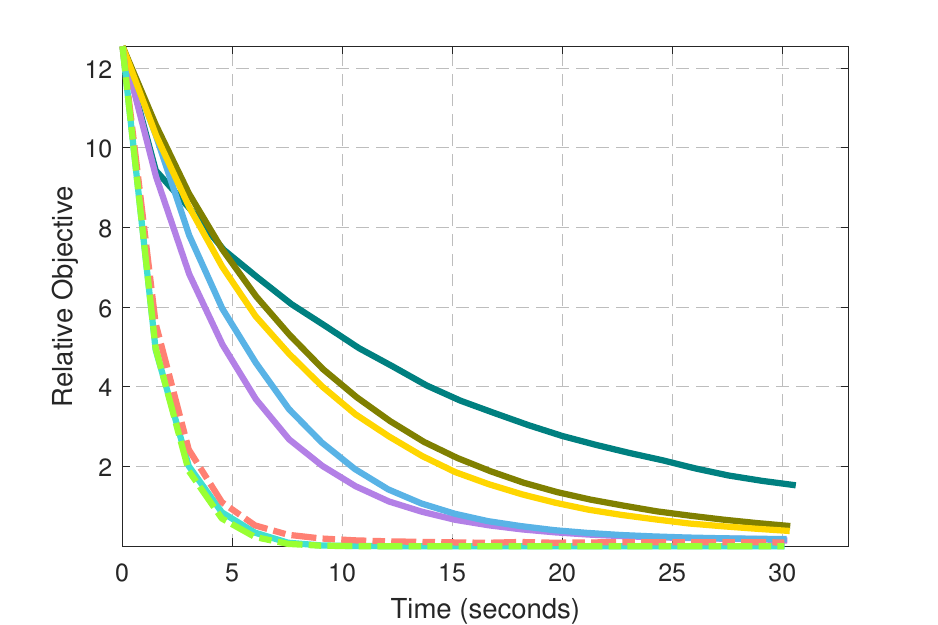}
		\caption{MNIST.}
	\end{subfigure}
	\begin{subfigure}{0.24\textwidth}
		\includegraphics[width=\textwidth]{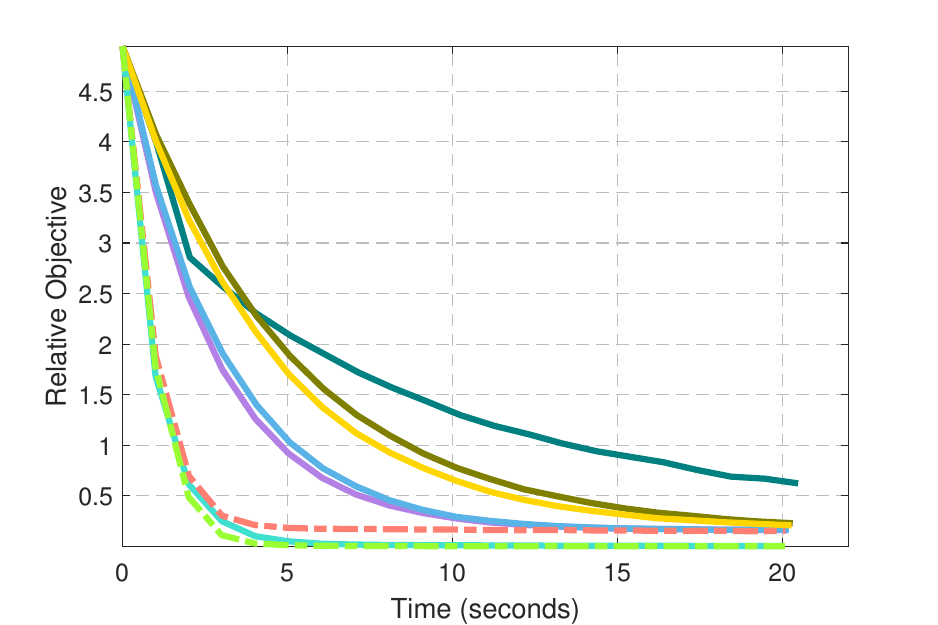}
		\caption{TEXT1.}
	\end{subfigure}
	\caption{Convergence curves of the compared methods for the LRMC problem in the online setting.}
	\label{fig:lrmconline}
\end{figure}

\begin{figure}[htbp]
	\centering
	\captionsetup[subfigure]{justification=centering, font=small}
	\begin{subfigure}{0.24\textwidth}
		\includegraphics[width=\textwidth]{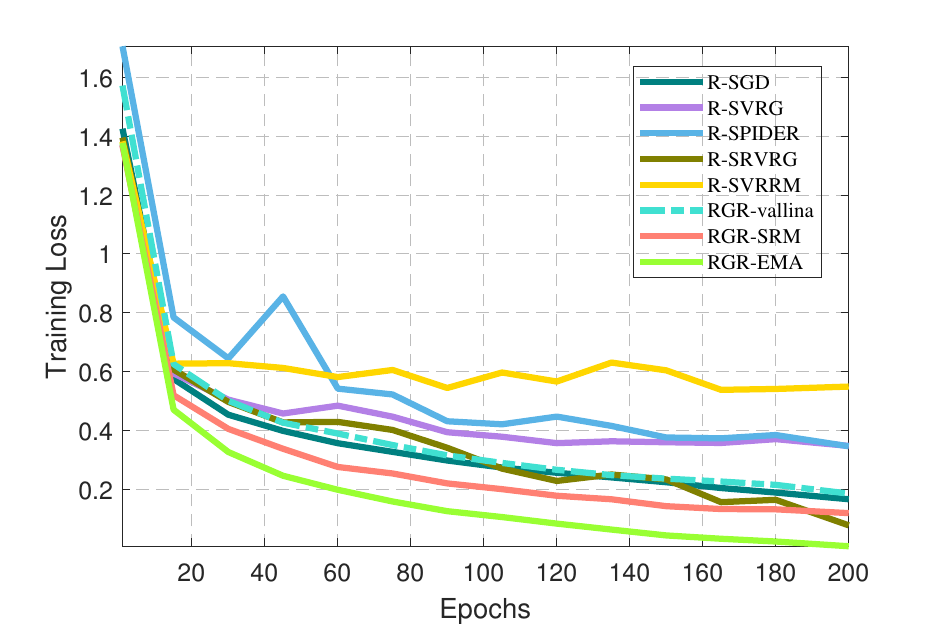}
		\caption{Train loss\\ on Cifar.}
	\end{subfigure}
	\centering
	\begin{subfigure}{0.24\textwidth}
		\includegraphics[width=\textwidth]{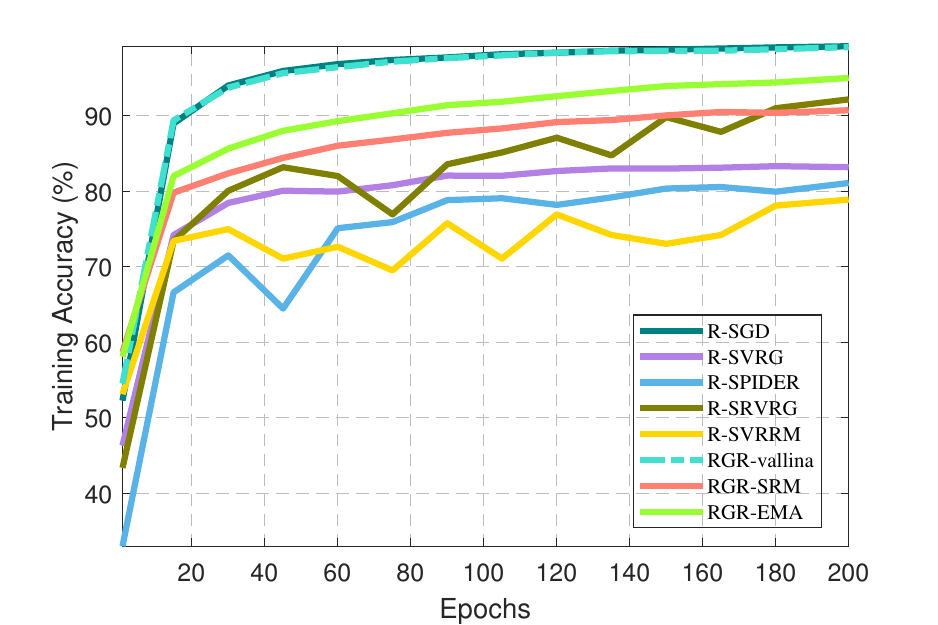}
		\caption{Train Acc\\ on Cifar.}
	\end{subfigure}
	\begin{subfigure}{0.24\textwidth}
		\includegraphics[width=\textwidth]{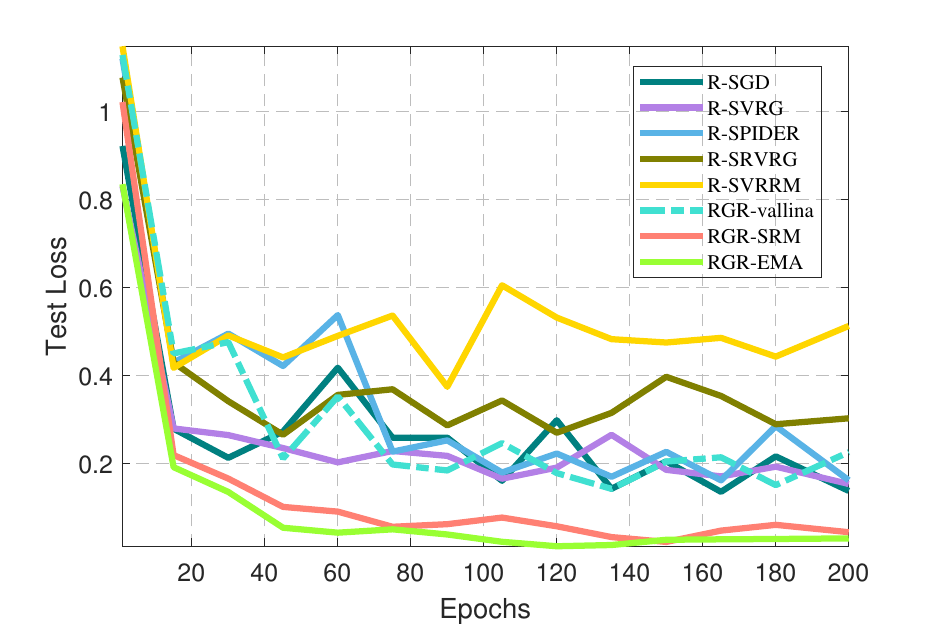}
		\caption{Test loss\\ on Cifar.}
	\end{subfigure}
	\begin{subfigure}{0.24\textwidth}
		\includegraphics[width=\textwidth]{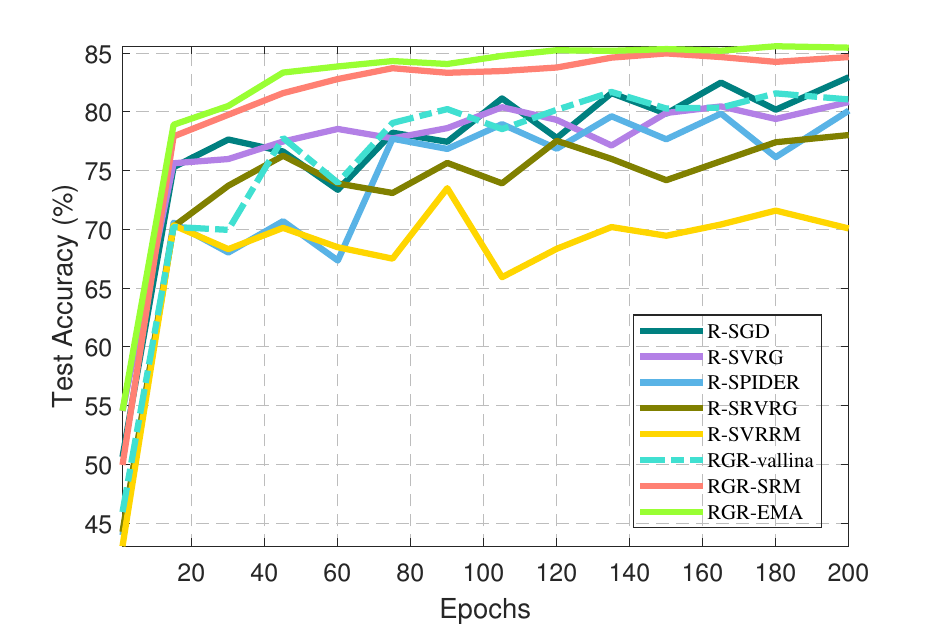}
		\caption{Test Acc\\ on Cifar.}
	\end{subfigure}

	\begin{subfigure}{0.24\textwidth}
		\includegraphics[width=\textwidth]{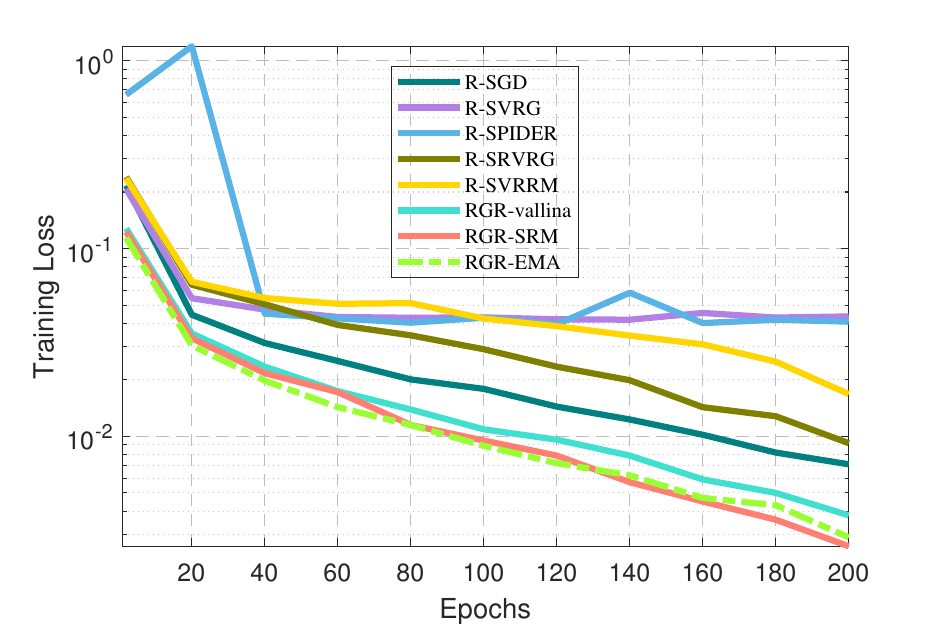}
		\caption{Train loss\\ on MNIST.}
	\end{subfigure}
	\begin{subfigure}{0.24\textwidth}
		\includegraphics[width=\textwidth]{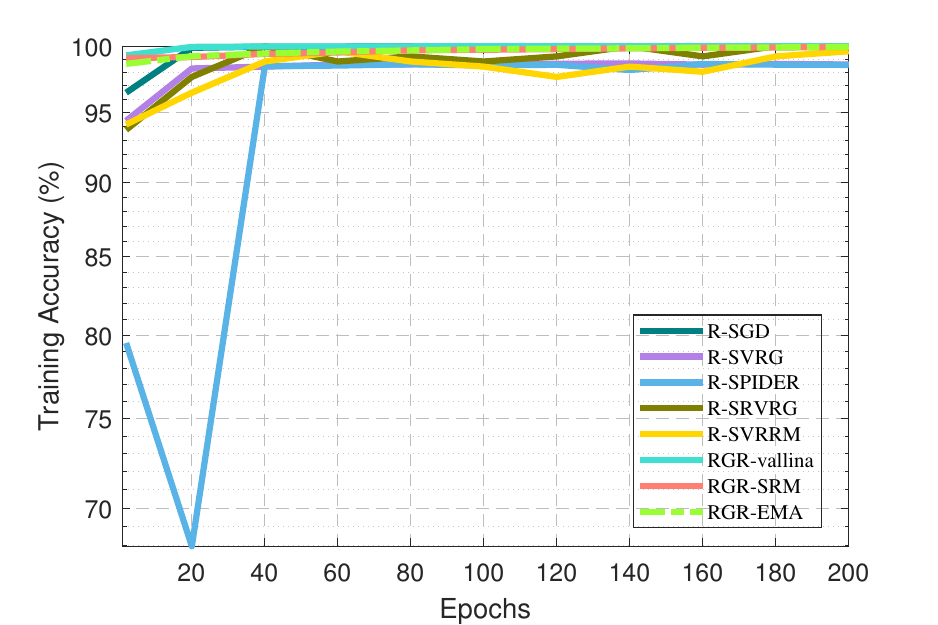}
		\caption{Train Acc\\ on MNIST.}
	\end{subfigure}
	\begin{subfigure}{0.24\textwidth}
		\includegraphics[width=\textwidth]{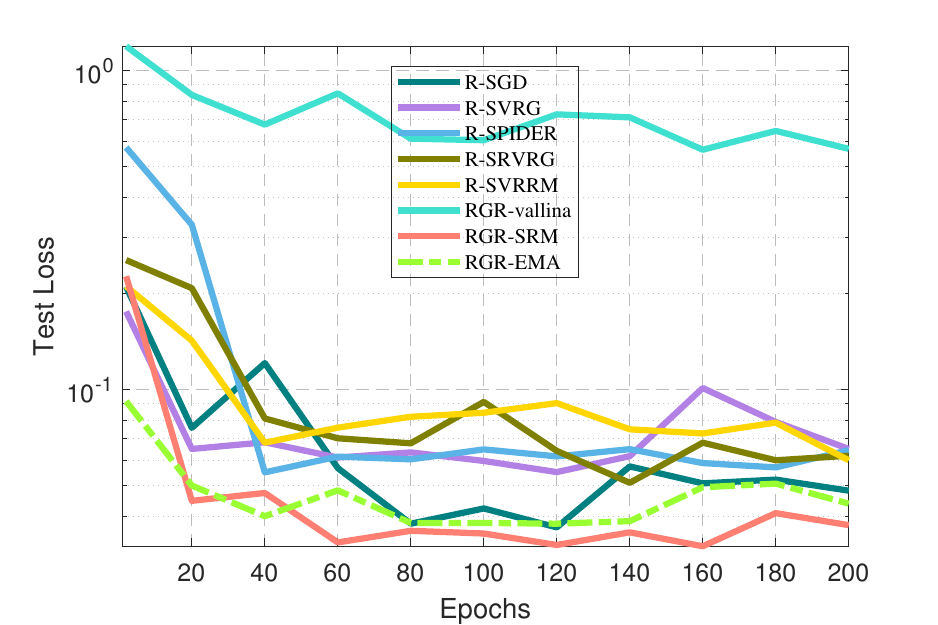}
		\caption{Test loss\\ on MNIST.}
	\end{subfigure}
	\begin{subfigure}{0.24\textwidth}
		\includegraphics[width=\textwidth]{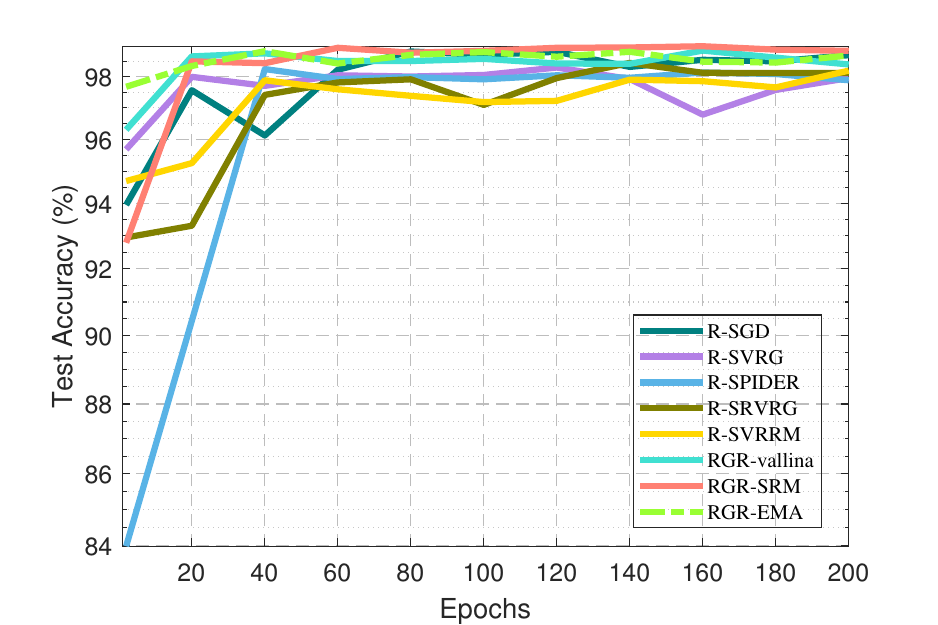}
		\caption{Test Acc\\ on MNIST.}
	\end{subfigure}
	\caption{Training and test performance of the compared methods for OCNNs in the online setting, including loss and accuracy.}
	\label{fig:ocnn}
\end{figure}

\begin{figure}[htbp]
	\centering
	\begin{subfigure}{0.24\textwidth}
		\includegraphics[width=\textwidth]{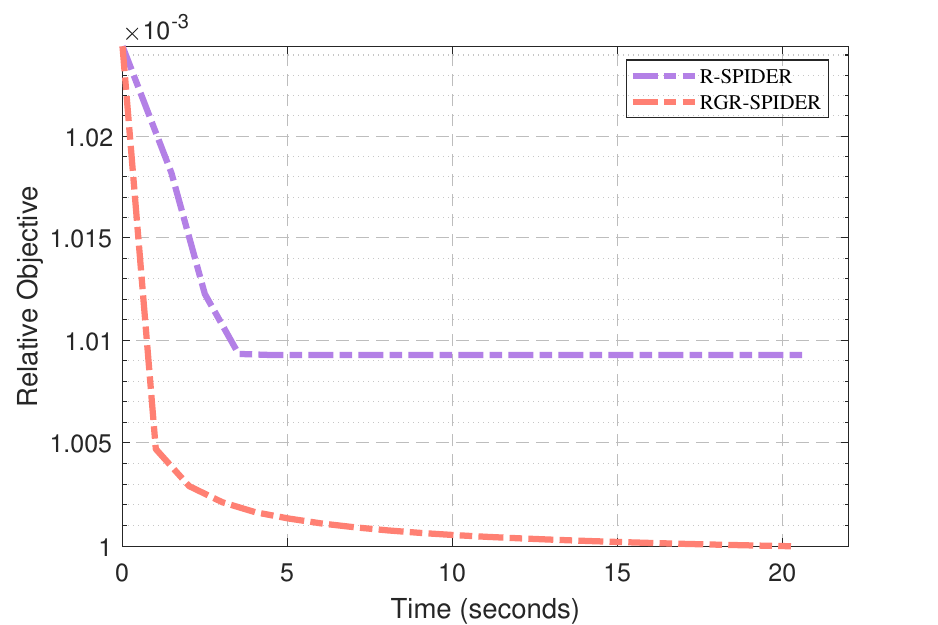}
		\caption{TDT2.}
	\end{subfigure}
	\begin{subfigure}{0.24\textwidth}
		\includegraphics[width=\textwidth]{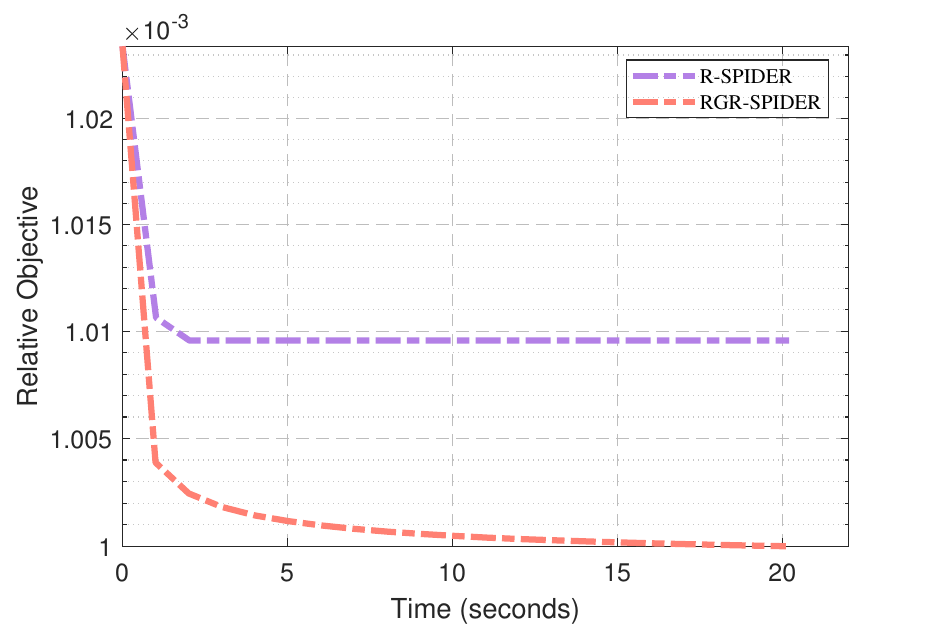}
		\caption{20News.}
	\end{subfigure}
	\begin{subfigure}{0.24\textwidth}
		\includegraphics[width=\textwidth]{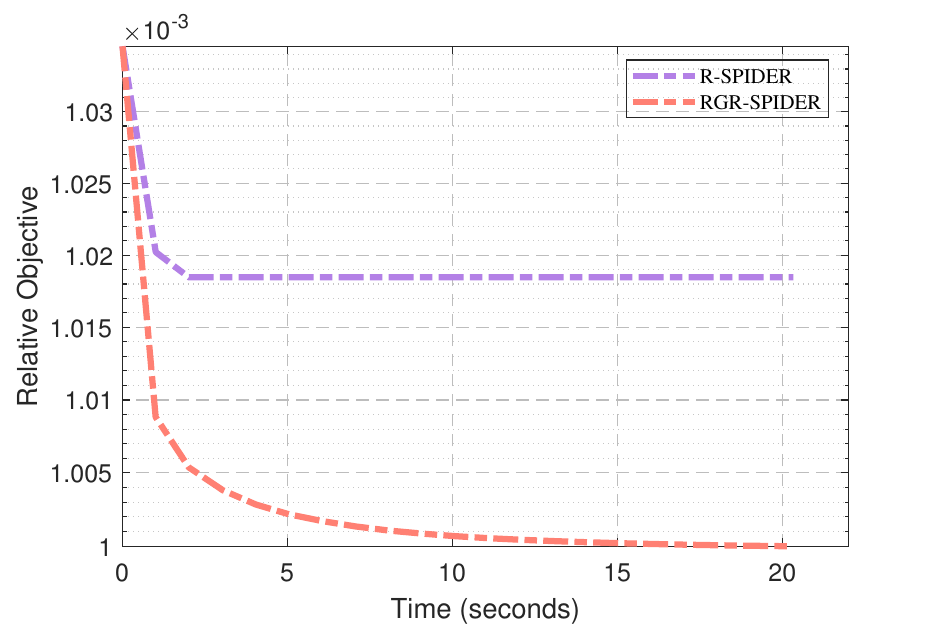}
		\caption{Sector.}
	\end{subfigure}
	\begin{subfigure}{0.24\textwidth}
		\includegraphics[width=\textwidth]{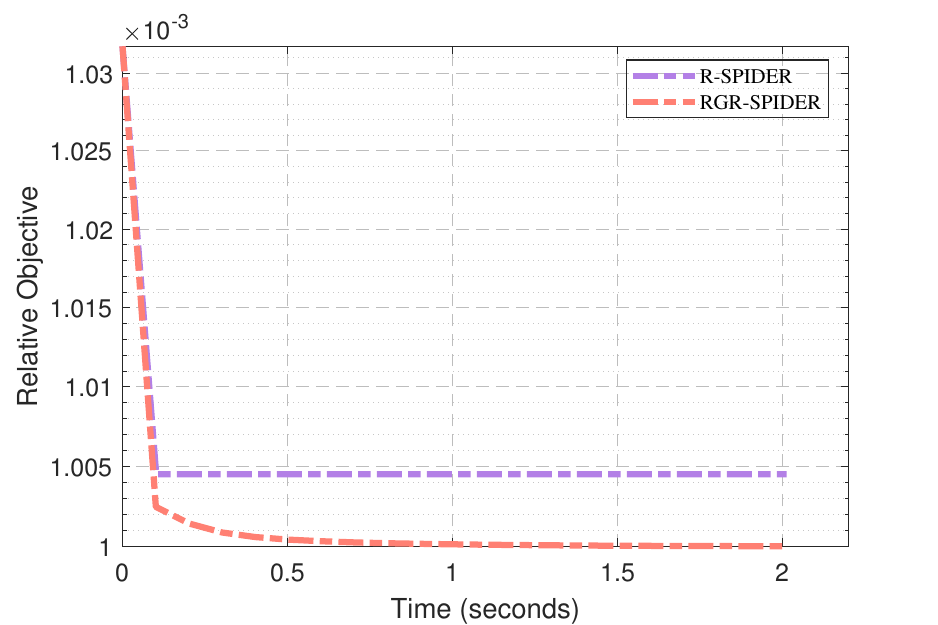}
		\caption{mushroom.}
	\end{subfigure}
	
	\begin{subfigure}{0.24\textwidth}
		\includegraphics[width=\textwidth]{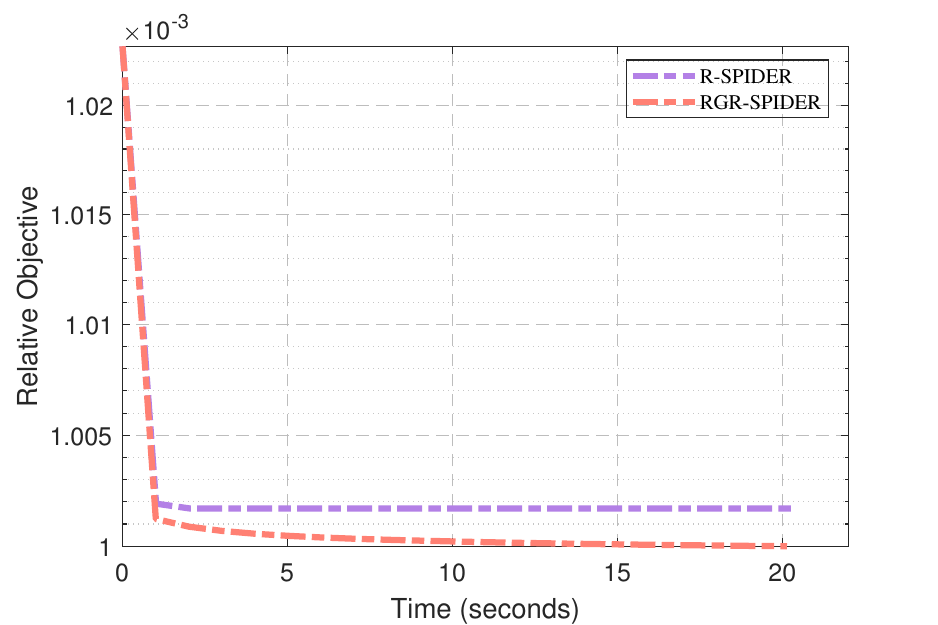}
		\caption{Cifar.}
	\end{subfigure}
	\begin{subfigure}{0.24\textwidth}
		\includegraphics[width=\textwidth]{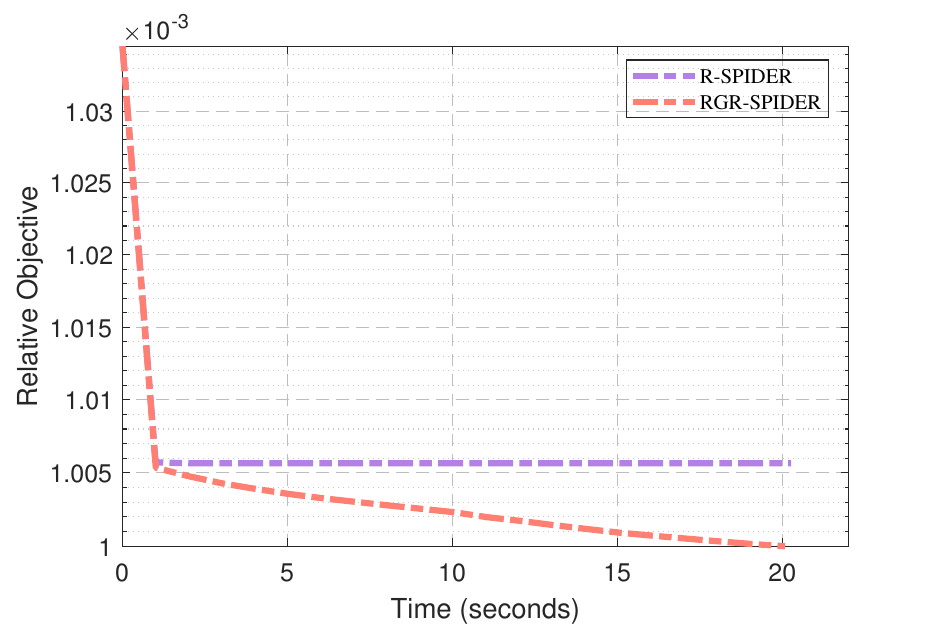}
		\caption{Gisette.}
	\end{subfigure}
	\begin{subfigure}{0.24\textwidth}
		\includegraphics[width=\textwidth]{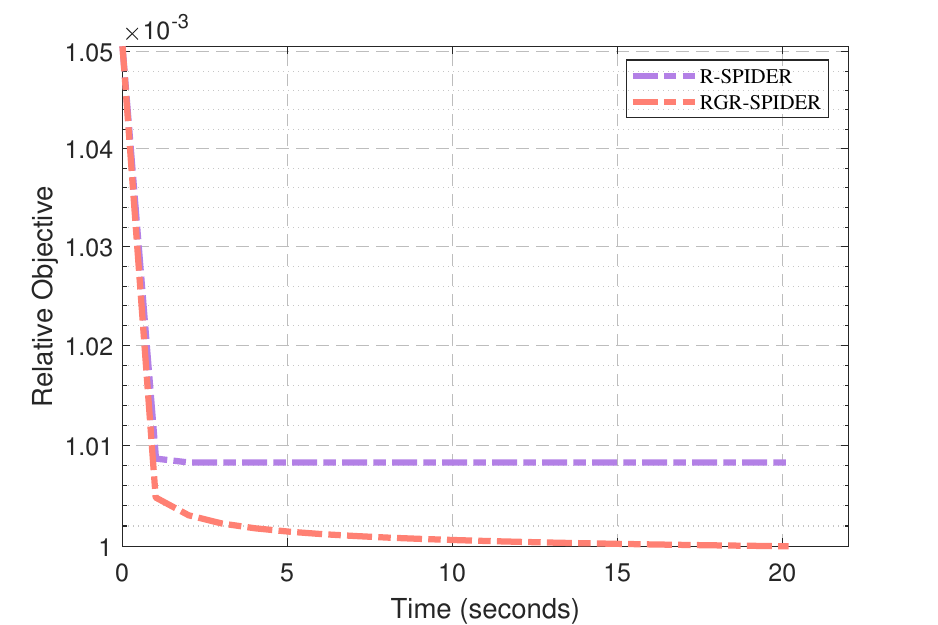}
		\caption{MNIST.}
	\end{subfigure}
	\begin{subfigure}{0.24\textwidth}
		\includegraphics[width=\textwidth]{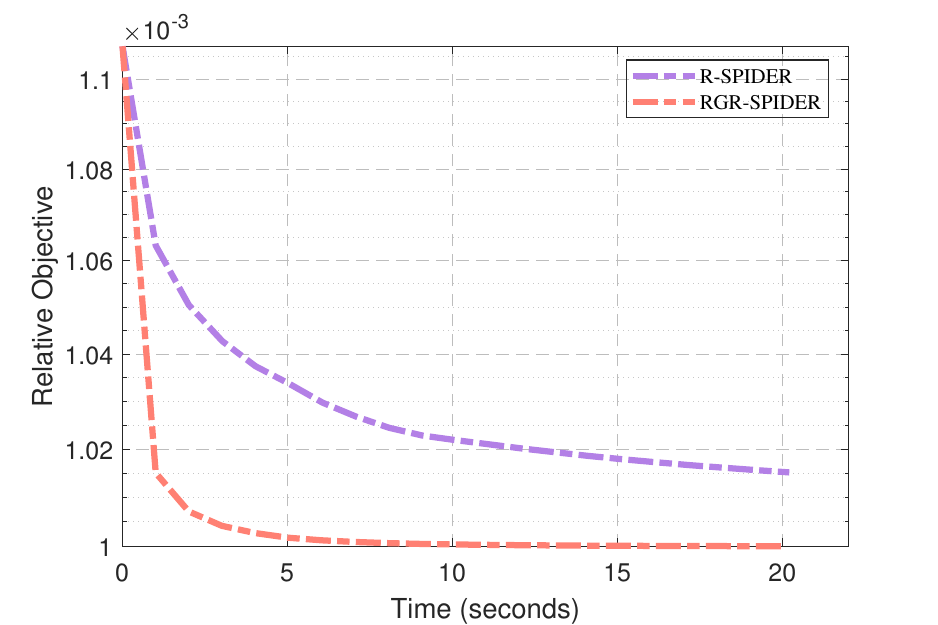}
		\caption{TEXT1.}
	\end{subfigure}
	\caption{Comparison between Euclidean-space and Riemannian tangent-space implementations of the {\sf SPIDER} estimator for finite-sum PCA.}\label{fig:comparespider}
\end{figure}

\begin{figure}[htbp]
	\centering
	\begin{subfigure}{0.24\textwidth}
		\includegraphics[width=\textwidth]{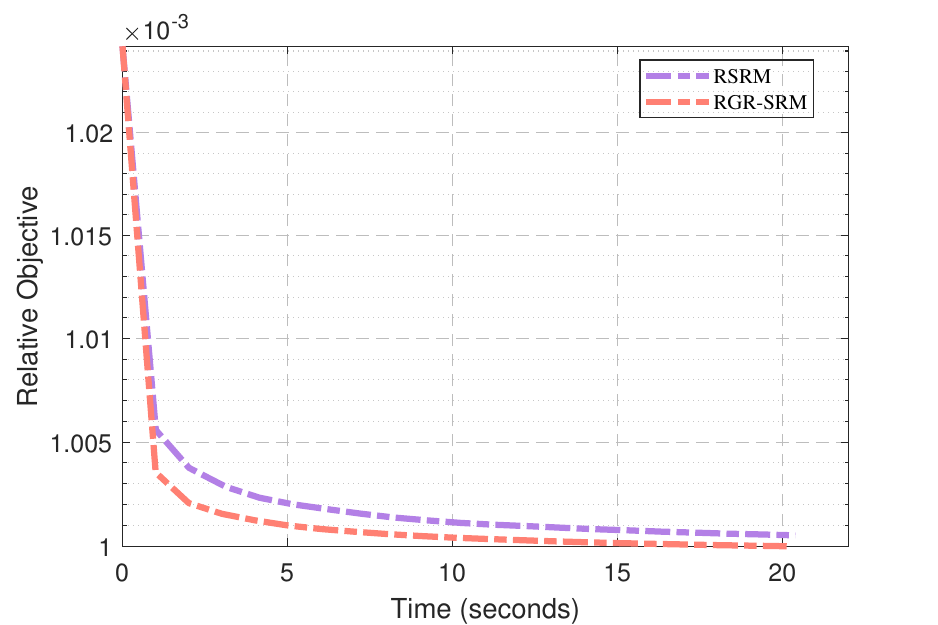}
		\caption{TDT2.}
	\end{subfigure}
	\begin{subfigure}{0.24\textwidth}
		\includegraphics[width=\textwidth]{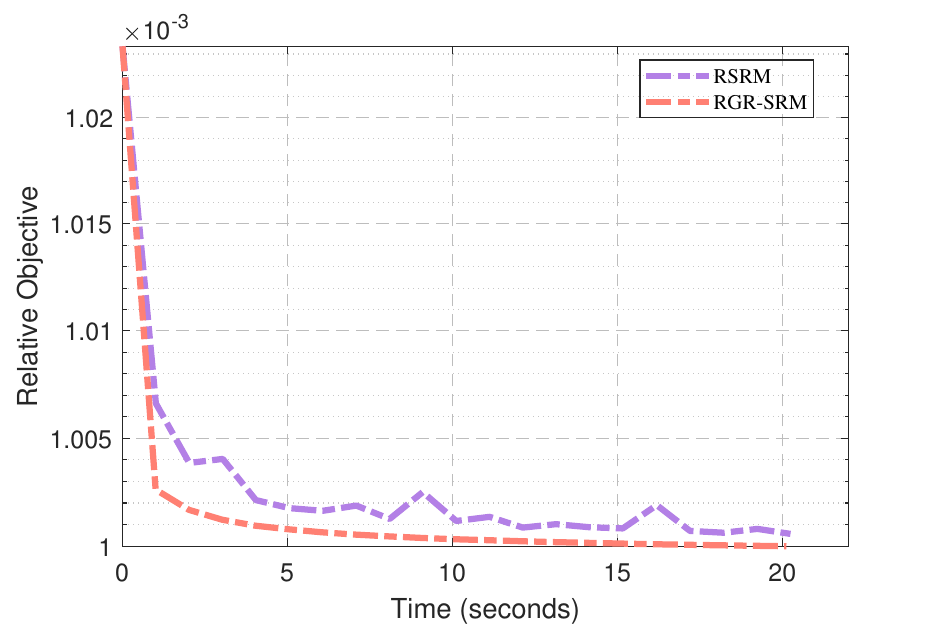}
		\caption{20News.}
	\end{subfigure}
	\begin{subfigure}{0.24\textwidth}
		\includegraphics[width=\textwidth]{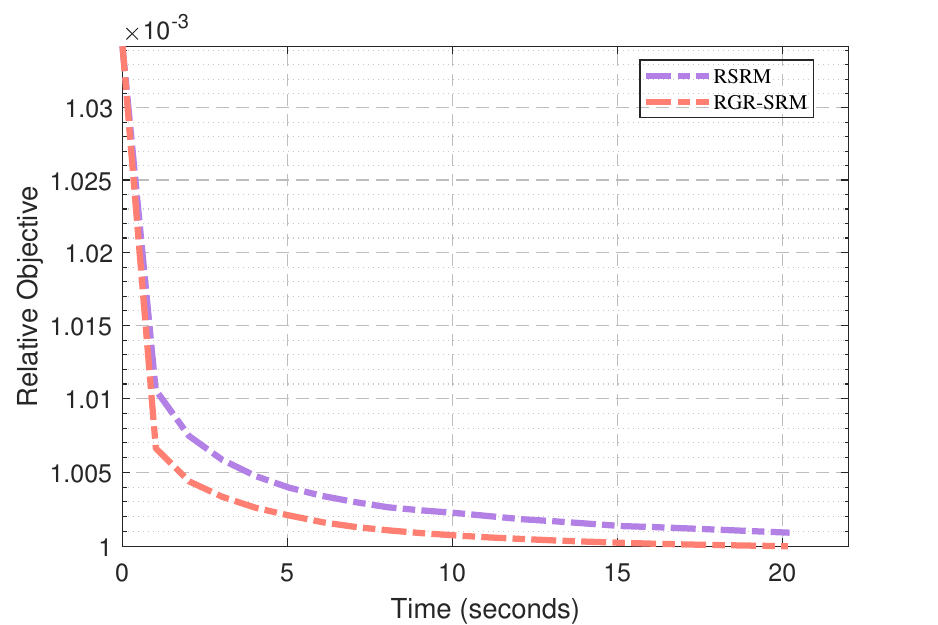}
		\caption{Sector.}
	\end{subfigure}
	\begin{subfigure}{0.24\textwidth}
		\includegraphics[width=\textwidth]{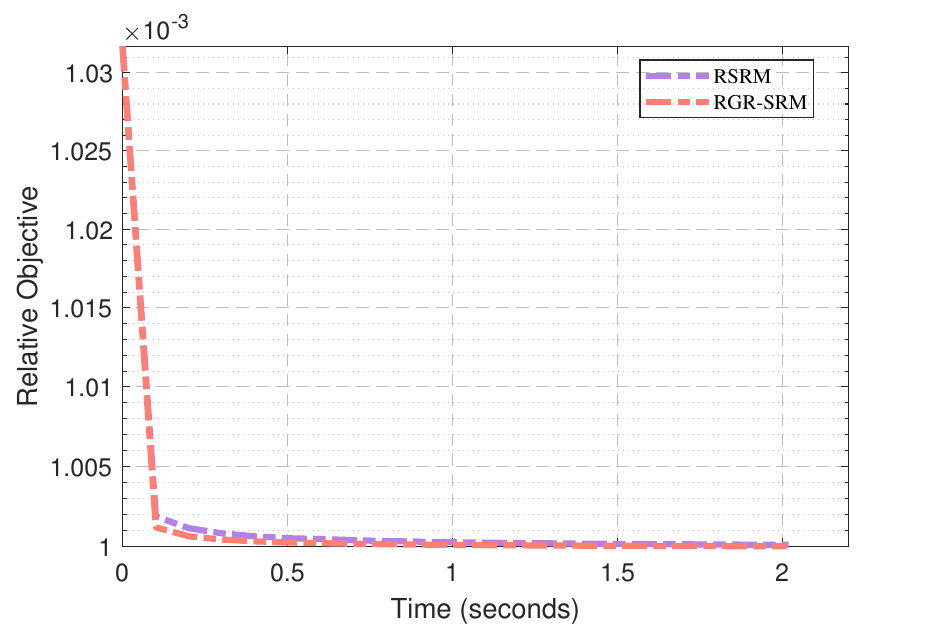}
		\caption{mushroom.}
	\end{subfigure}
	
	\begin{subfigure}{0.24\textwidth}
		\includegraphics[width=\textwidth]{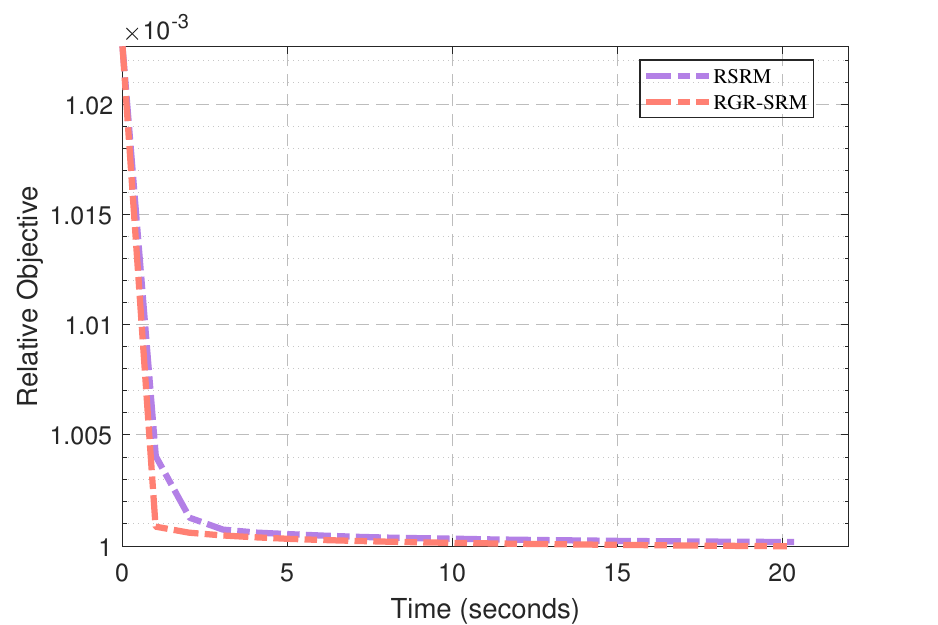}
		\caption{Cifar.}
	\end{subfigure}
	\begin{subfigure}{0.24\textwidth}
		\includegraphics[width=\textwidth]{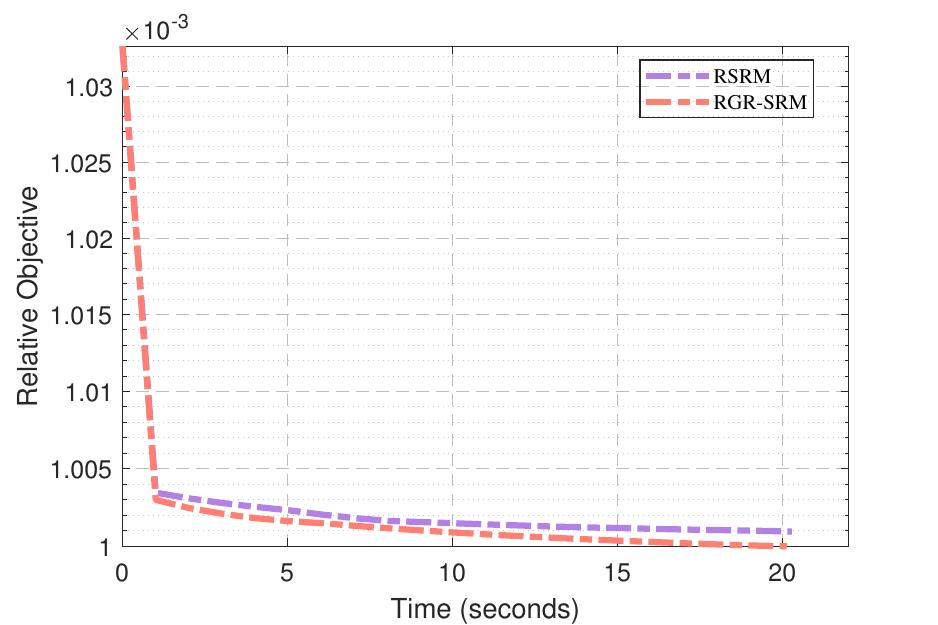}
		\caption{Gisette.}
	\end{subfigure}
	\begin{subfigure}{0.24\textwidth}
		\includegraphics[width=\textwidth]{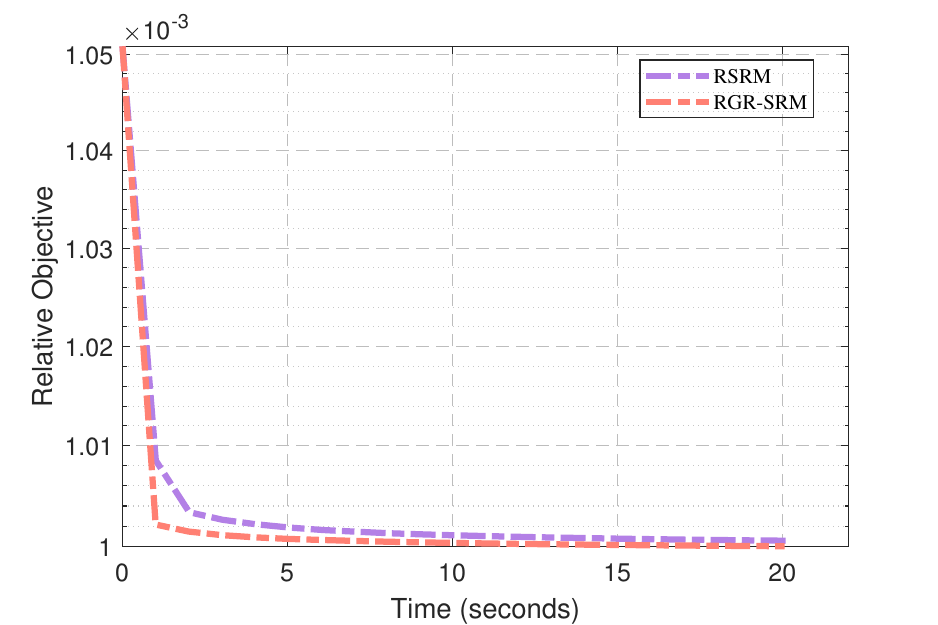}
		\caption{MNIST.}
	\end{subfigure}
	\begin{subfigure}{0.24\textwidth}
		\includegraphics[width=\textwidth]{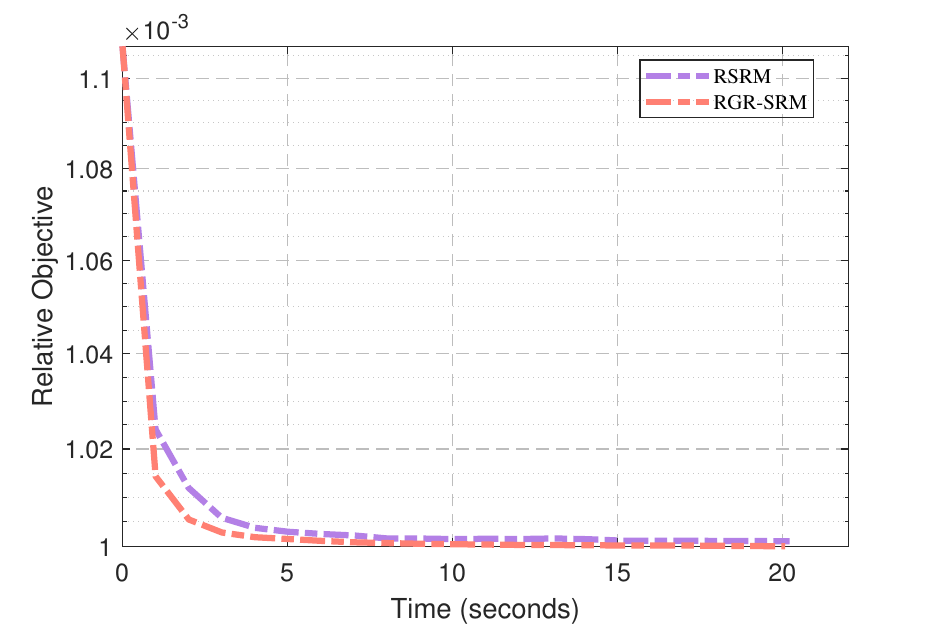}
		\caption{TEXT1.}
	\end{subfigure}
	\caption{Comparison between Euclidean-space and Riemannian tangent-space implementations of the {\sf SRM} estimator for online PCA.}\label{fig:comparesrm}
\end{figure}

\begin{figure}[htbp]
	\centering
	\begin{subfigure}{0.24\textwidth}
		\includegraphics[width=\textwidth]{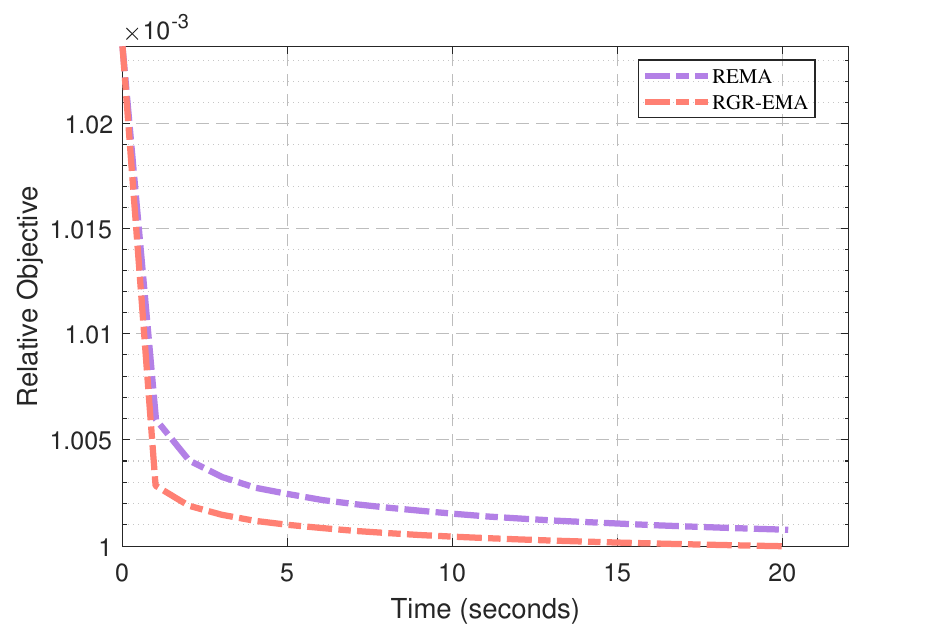}
		\caption{TDT2.}
	\end{subfigure}
	\begin{subfigure}{0.24\textwidth}
		\includegraphics[width=\textwidth]{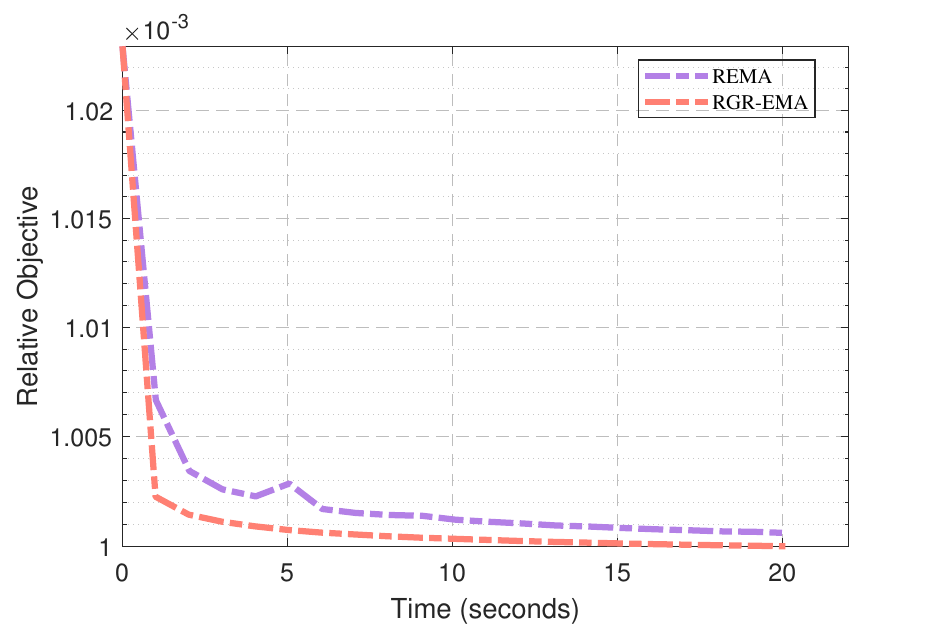}
		\caption{20News.}
	\end{subfigure}
	\begin{subfigure}{0.24\textwidth}
		\includegraphics[width=\textwidth]{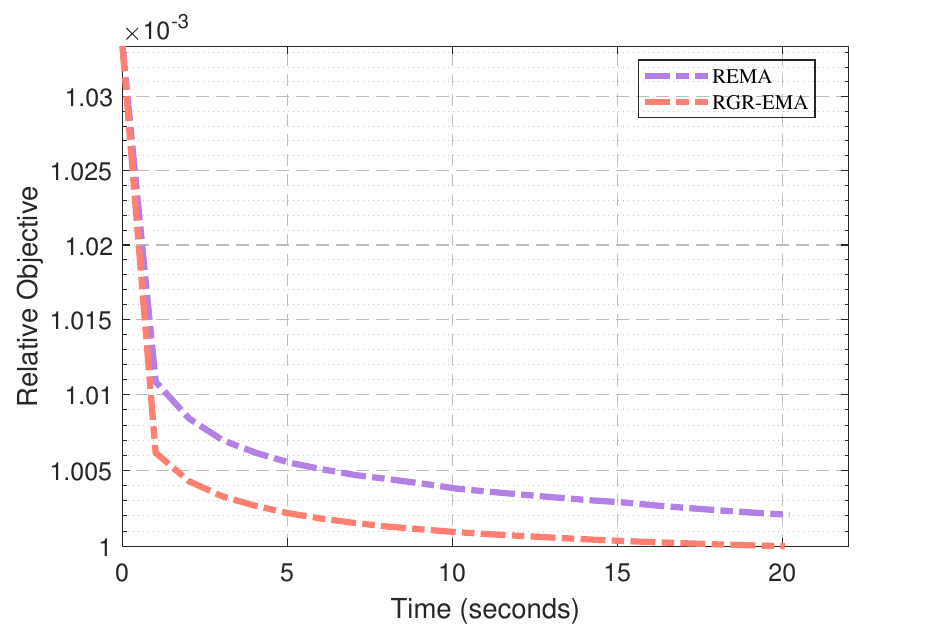}
		\caption{Sector.}
	\end{subfigure}
	\begin{subfigure}{0.24\textwidth}
		\includegraphics[width=\textwidth]{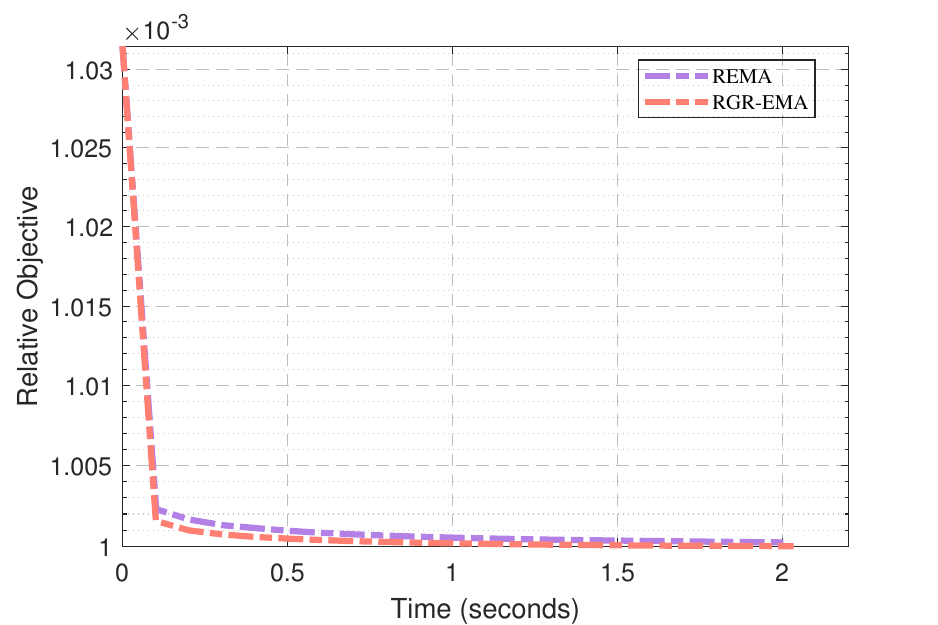}
		\caption{mushroom.}
	\end{subfigure}
	
	\begin{subfigure}{0.24\textwidth}
		\includegraphics[width=\textwidth]{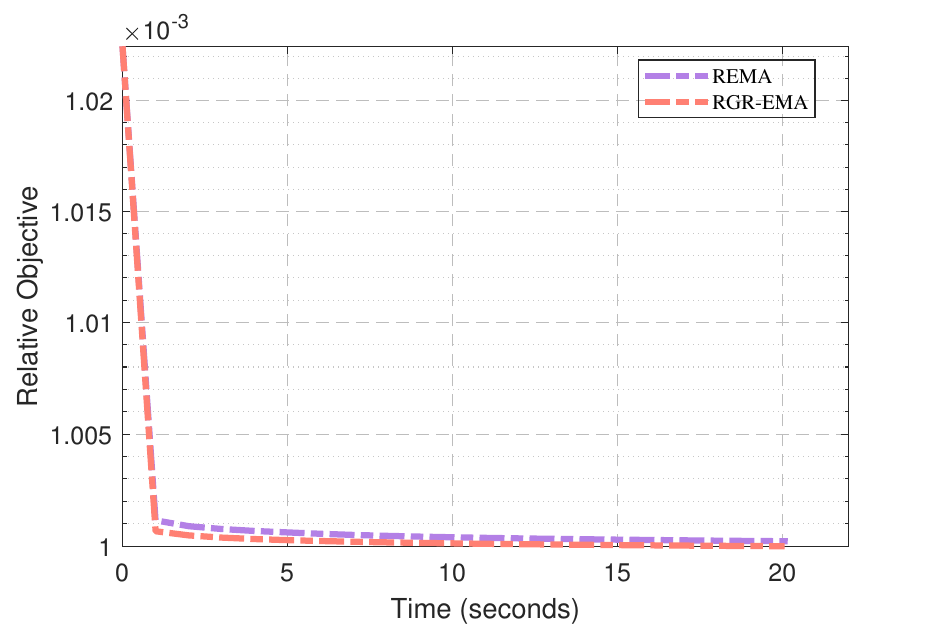}
		\caption{Cifar.}
	\end{subfigure}
	\begin{subfigure}{0.24\textwidth}
		\includegraphics[width=\textwidth]{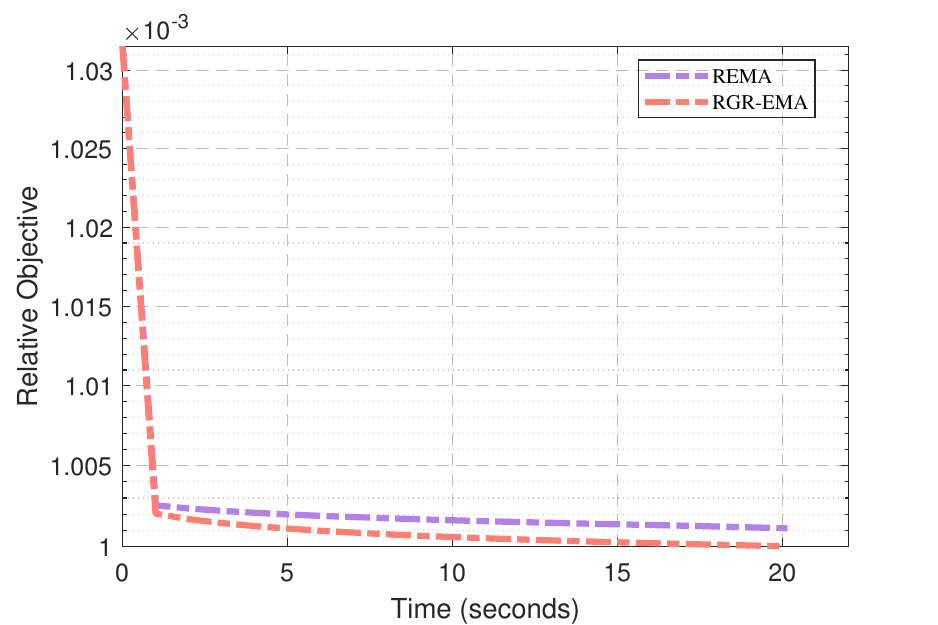}
		\caption{Gisette.}
	\end{subfigure}
	\begin{subfigure}{0.24\textwidth}
		\includegraphics[width=\textwidth]{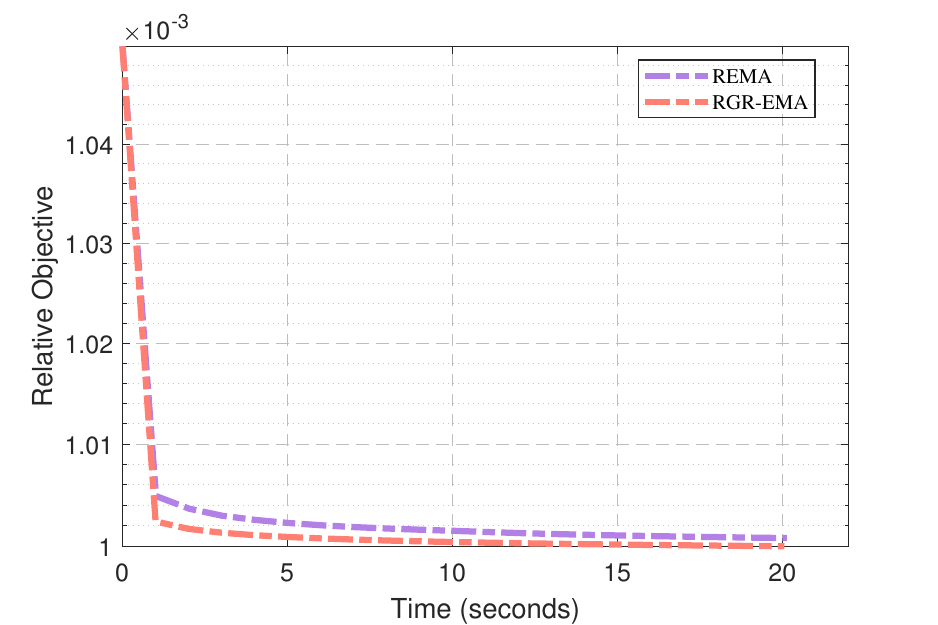}
		\caption{MNIST.}
	\end{subfigure}
	\begin{subfigure}{0.24\textwidth}
		\includegraphics[width=\textwidth]{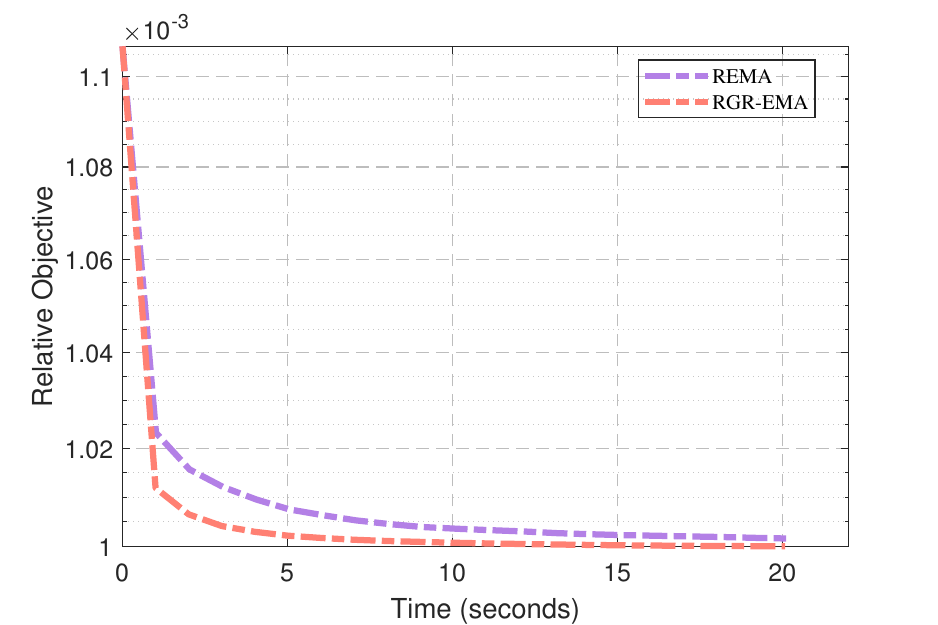}
		\caption{TEXT1.}
	\end{subfigure}
	\caption{Comparison between Euclidean-space and Riemannian tangent-space implementations of the {\sf EMA} estimator for online PCA.}\label{fig:compareema}
\end{figure}

\appendix

\newpage
\onecolumn
\appendix
\huge{Appendix}

\normalsize
The appendix is organized as follows.

Appendix~\ref{app:sect:notations} summarizes the notation used throughout the paper.

Appendix~\ref{appendixema} introduce the {\sf REMA} method discussed in
Section~\ref{sec:Euclidean-tangent}.

Appendix~\ref{app:sect:lemmas} presents several relevant lemmas.

Appendix~\ref{app:sect:proof} provides the proofs for the convergence analysis in Section~\ref{sec:com}.

\section{Notations}\label{app:sect:notations}

In this paper, bold lowercase letters represent vectors, and uppercase letters denote real-valued matrices. The following notations are used throughout this paper.

\begin{itemize}[leftmargin=12pt,itemsep=0.2ex]
	
	\item Vectors are denoted by boldface lowercase letters, and matrices by boldface uppercase letters.
	\item $\X^\top$: Transpose of matrix $\X$.
	
	\item $\mathbf{I}_r$: The $r\times r$ identity matrix.
	\item $\mathbf{0}_{n\times r}$: The zero matrix in $\mathbb{R}^{n\times r}$.
	
	\item $\A_{ij}$: The $(i,j)$-th entry of matrix $\A$, that is, the element in the $i$-th row and $j$-th column.

\item $\|\X\|_{\fro}$: The Frobenius norm of matrix $\X \in \mathbb{R}^{n\times r}$, defined as $\|\X\|_{\fro} = \sqrt{\langle \X,\X\rangle}.$

\item $\langle \A,\B\rangle$: The standard inner product on $\mathbb{R}^{n\times r}$, defined as $\langle \A,\B\rangle = \mathrm{tr}(\A\trans \B)
= \sum_{i=1}^{n}\sum_{j=1}^{r} \A_{ij}\B_{ij},$ where $\A,\B\in\mathbb{R}^{n\times r}$.

	\item $\{A_k\}_{k=0}^{\infty}$, $\{B_k\}_{k=0}^{\infty}$: Infinite sequences indexed by $k = 0, 1, 2, \ldots$.

	\item $[T]\triangleq\{1,\ldots,T\}$ denotes the set of the first $T$
	positive integers.
	
\end{itemize}

\section{Riemannian Exponential Moving Average with Parallel Transport}
\label{appendixema}

This section presents the {\sf REMA} method described in
Algorithm~\ref{alg:REMA}, which serves as the tangent-space counterpart
discussed in Section~\ref{sec:Euclidean-tangent}.

\begin{algorithm}[htbp]
	\caption{Riemannian Exponential Moving Average ({\sf REMA})
		with Parallel Transport}
	\label{alg:REMA}
	\begin{algorithmic}[1]
		
		\State \textbf{Input:}
		Initial point $\X_0\in\mathrm{St}(n,r)$,
		stepsize parameter $\theta>0$,
		EMA weight $\alpha\in(0,1)$,
		mini-batch size $b$, and iteration budget $T$.
		
		\State \textbf{Initialization:}
		Draw an initial mini-batch $\xi_0$ of size $b$ and set
		$
		\G_0=\nabla f(\X_0;\xi_0),
		\
		\V_0
		=
		\G_0-\X_0\G_0^\top\X_0
		\in T_{\X_0}\mathrm{St}(n,r).
		$
		
		\For{$t=1$ \textbf{to} $T$}
		
		\State (S1) Set the stepsize
		$\mu_t=\theta t^{-1/2}$.
		
		\State (S2) Update the iterate via retraction:
		\Statex \hspace{3em}
		$\displaystyle
		\X_t
		=
		\Retr_{\X_{t-1}}(-\mu_t\V_{t-1}).
		$
		
		\State (S3) Draw a random mini-batch $\xi_t$ of size $b$.
		
		\State (S4) Compute the Euclidean stochastic gradient:
		\Statex \hspace{3em}
		$\displaystyle
		\G_t=\nabla f(\X_t;\xi_t).
		$
		
		\State (S5) Map $\G_t$ to a tangent direction:
		\Statex \hspace{3em}
		$\displaystyle
		\widehat{\G}_t
		=
		\G_t-\X_t\G_t^\top\X_t
		\in T_{\X_t}\mathrm{St}(n,r).
		$
		
		\State (S6) Update the Riemannian estimator via
		parallel transport:
		\Statex \hspace{3em}
		$\displaystyle
		\V_t
		=
		\alpha\widehat{\G}_t
		+
		(1-\alpha)
		\mathcal{T}_{\X_{t-1}}^{\X_t}(\V_{t-1}).
		$
		
		\EndFor
		
		\State Draw $\tau$ uniformly from $\{1,\ldots,T\}$.
		\State \textbf{Output:} $\X_\tau$.
		
	\end{algorithmic}
\end{algorithm}

\section{Relevant Lemmas}\label{app:sect:lemmas}

We provide a set of useful lemmas, each independent of context and specific methodologies.

\begin{lemma}[{\cite[Lemma A.5]{yuan2026adaptive}}] \label{lemma:online}
	We let $c>0$ and $S_t\triangleq \sum_{i=0}^t A_i$, where $\{A_i\}_{i=0}^{\infty}$ and $\{S_i\}_{i=0}^{\infty}$ are two non-negative sequences. We have: $\sum_{i=0}^t {A_i}/{\sqrt{c +  S_i}} \leq 2 \sqrt{ c +  S_t  }$. 
\end{lemma}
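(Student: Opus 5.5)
The statement is Lemma~\ref{lemma:online}: for nonnegative $A_i$ with partial sums $S_t=\sum_{i=0}^t A_i$, we have $\sum_{i=0}^t A_i/\sqrt{c+S_i}\le 2\sqrt{c+S_t}$. I will prove it by a telescoping argument that compares each summand with an increment of the concave function $x\mapsto 2\sqrt{x}$.

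Set $S_{-1}\triangleq 0$, so that $A_i=S_i-S_{i-1}$ for every $i\ge 0$. The key step is the per-term inequality
\[
\frac{A_i}{\sqrt{c+S_i}}\le 2\bigl(\sqrt{c+S_i}-\sqrt{c+S_{i-1}}\bigr).
\]
To verify it, write $a\triangleq\sqrt{c+S_i}$ and $b\triangleq\sqrt{c+S_{i-1}}$. These satisfy $a\ge b>0$, since $c>0$ and $A_i\ge 0$. Then $A_i=a^2-b^2$, so the left side equals $(a-b)(a+b)/a$. Because $b\le a$, we have $(a+b)/a\le 2$, and since $a-b\ge 0$ the claimed bound follows.

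Summing the per-term inequality over $i=0,\dots,t$ telescopes the right side to $2\sqrt{c+S_t}-2\sqrt{c}$. This is at most $2\sqrt{c+S_t}$, which gives the lemma.

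There is no genuine obstacle in this argument. The only points needing care are the convention $S_{-1}=0$ and the positivity of $c$. Positivity of $c$ guarantees that every denominator is nonzero and that $b>0$, even when some initial $A_i$ vanish.
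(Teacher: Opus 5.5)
Your proof is correct and complete: the per-term bound $A_i/\sqrt{c+S_i}\le 2\bigl(\sqrt{c+S_i}-\sqrt{c+S_{i-1}}\bigr)$, obtained from $(a^2-b^2)/a\le 2(a-b)$ and then telescoped, is the standard argument for this inequality. It is equivalent to bounding each term by $\int_{S_{i-1}}^{S_i}(c+x)^{-1/2}\,dx$, and it actually gives the slightly sharper bound $2\sqrt{c+S_t}-2\sqrt{c}$. The paper gives no proof of its own here; it imports the lemma from \cite{yuan2026adaptive}, so there is no in-paper argument to compare against.
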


\begin{lemma}[{\cite[Lemma A.4]{yuan2026adaptive}}] \label{lemma:remove:log}
	Assume that $\{A_i\}_{i=0}^n$ and $\{B_i\}_{i=0}^{n+1}$ are two non-negative sequences with $A_0\leq A_1\leq \ldots \leq A_n$. Then, we have: $$\ts \sum_{t=0}^n A_t (B_t - B_{t+1})\leq [\max_{i=0}^n A_i]\cdot[\max_{j=0}^n B_j].$$ 
\end{lemma}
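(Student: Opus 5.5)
The plan is a summation by parts (Abel summation) that moves the differences from the sequence $\{B_t\}$ onto the monotone sequence $\{A_t\}$. Then nonnegativity of the increments $A_t-A_{t-1}$ lets us bound every $B_t$ by its maximum. No earlier result of the paper is needed; the argument is purely algebraic.

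\textbf{Step 1 (reindex).} First expand the sum and shift the index in the second part:
\begin{align*}
\ts\sum_{t=0}^n A_t(B_t-B_{t+1})
&= \ts\sum_{t=0}^n A_tB_t-\sum_{t=1}^{n+1}A_{t-1}B_t\\
&= A_0B_0+\ts\sum_{t=1}^{n}(A_t-A_{t-1})B_t-A_nB_{n+1}.
\end{align*}

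\textbf{Step 2 (drop the boundary term).} Since $A_n\ge 0$ and $B_{n+1}\ge 0$, the term $-A_nB_{n+1}$ is nonpositive and can be discarded. This also explains why the maximum on the right-hand side of the statement only ranges over $B_0,\ldots,B_n$: the entry $B_{n+1}$ appears only in the discarded term.

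\textbf{Step 3 (bound the remaining terms).} Put $\bar B\triangleq\max_{0\le j\le n}B_j$. The coefficients $A_0$ and $A_t-A_{t-1}$ for $t\ge1$ are all nonnegative, by nonnegativity and monotonicity of $\{A_t\}$. Hence each $B_t$ in the remaining sum may be replaced by $\bar B$ without decreasing it. This gives
\[
\ts\sum_{t=0}^n A_t(B_t-B_{t+1})\le \bar B\Big(A_0+\sum_{t=1}^n(A_t-A_{t-1})\Big)=\bar B\,A_n .
\]
The sum telescopes to $A_n$.

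\textbf{Step 4 (conclude).} By monotonicity, $A_n=\max_{0\le i\le n}A_i$, which yields the claimed inequality.

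There is no genuine obstacle. The only point requiring care is the sign of each coefficient in Step 3, since that is exactly where the monotonicity hypothesis enters. Without it, some increment $A_t-A_{t-1}$ would be negative, and replacing $B_t$ by $\bar B$ would no longer be an upper bound.
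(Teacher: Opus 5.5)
Your summation-by-parts argument is correct and complete. After reindexing, the boundary term $-A_nB_{n+1}$ is nonpositive, the coefficients $A_0$ and $A_t-A_{t-1}$ are nonnegative by monotonicity, and the remaining sum telescopes to $A_n\max_{0\le j\le n}B_j$. The paper gives no proof of this lemma and simply imports it from \cite{yuan2026adaptive}; your Abel-summation route is the standard argument for a bound of this type, so nothing is missing.
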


%


\begin{lemma}[{\cite[Lemma A.6]{yuan2026adaptive}}] \label{lemma:online:pp}
	Let $c>0$ and $S_t\triangleq \sum_{i=0}^t A_i$, where $\{A_i\}_{i=0}^{\infty}$ and $\{S_i\}_{i=0}^{\infty}$ are two non-negative sequences. We have $\ts \sum_{i=0}^t \tfrac{A_i}{ c + S_i} \leq \log( 1 + S_t/c) \leq \tfrac{(c+ S_t)^{p}}{p c^p} - 1$, where $p\in(0,1]$.
\end{lemma}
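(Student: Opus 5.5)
The plan is to prove the two inequalities separately. The first follows from comparing the sum with an integral, and the second is an elementary scalar inequality for $\log$.

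\textbf{First inequality.} Set $S_{-1}\triangleq 0$, so that $A_i=S_i-S_{i-1}$ for every $i\geq 0$, and $S_i$ is nondecreasing in $i$ because the $A_i$ are non-negative. The function $x\mapsto \tfrac{1}{c+x}$ is decreasing on $[0,\infty)$, so for every $x\in[S_{i-1},S_i]$ we have $\tfrac{1}{c+S_i}\leq\tfrac{1}{c+x}$. Integrating over this interval gives
\[
\tfrac{A_i}{c+S_i}=\tfrac{S_i-S_{i-1}}{c+S_i}\leq \int_{S_{i-1}}^{S_i}\tfrac{dx}{c+x}=\log(c+S_i)-\log(c+S_{i-1}).
\]
This also holds trivially when $A_i=0$. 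Summing over $i=0,\ldots,t$, the right-hand side telescopes to $\log(c+S_t)-\log c=\log(1+S_t/c)$, which is the first inequality.

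\textbf{Second inequality.} Let $y\triangleq (c+S_t)/c\geq 1$, so that $\tfrac{(c+S_t)^p}{pc^p}-1=\tfrac{y^p}{p}-1$. I would first show $\log y\leq \tfrac{y^p-1}{p}$ for $y\geq1$. Define $g(y)\triangleq \tfrac{y^p-1}{p}-\log y$. Then $g(1)=0$ and
\[
g'(y)=y^{p-1}-\tfrac{1}{y}=\tfrac{y^p-1}{y}\geq 0 \quad\text{for } y\geq 1,
\]
so $g\geq 0$ on $[1,\infty)$. Next,
\[
\tfrac{y^p-1}{p}-\Big(\tfrac{y^p}{p}-1\Big)=1-\tfrac1p\leq 0 \quad\text{for } p\in(0,1],
\]
so $\tfrac{y^p-1}{p}\leq \tfrac{y^p}{p}-1$. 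Chaining these two bounds gives $\log(1+S_t/c)\leq \tfrac{(c+S_t)^p}{pc^p}-1$.

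No step is a real obstacle. The only care needed is the convention $S_{-1}=0$, which makes the $i=0$ term fit the same integral comparison, and the check that the stated right-hand side $\tfrac{y^p}{p}-1$ is indeed no smaller than $\tfrac{y^p-1}{p}$ when $p\leq 1$.
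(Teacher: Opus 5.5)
Your proof is correct. The integral comparison $\tfrac{A_i}{c+S_i}\le\log(c+S_i)-\log(c+S_{i-1})$, with $S_{-1}=0$, telescopes to the first bound. The chain $\log y\le \tfrac{y^p-1}{p}\le \tfrac{y^p}{p}-1$ for $y\ge 1$ and $p\in(0,1]$ gives the second.

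The paper gives no proof of its own here; it imports the statement from \cite[Lemma A.6]{yuan2026adaptive}. Your argument is the standard one. The middle step can be shortened to $\log y=\tfrac1p\log(y^p)\le\tfrac1p(y^p-1)$ via $\log u\le u-1$, which holds for all $y>0$ and avoids differentiating.
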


\begin{lemma}[{\cite[Lemma B.7(a)]{yuan2026adaptive}}] \label{lemma:pq}
		 The {\sf SPIDER} estimator produces a stochastic gradient $\G_t$ that, for all $t$ with $\underline{r_t} \leq t \leq \overline{r_t}$, we have:  $\EEE[\|\G_t- \nabla f(\X_t)\|_{\fro}^2] - \|\G_{t-1} - \nabla f(\X_{t-1})\|_{\fro}^2	\leq \tfrac{L^2}{b} \EEE [  \|\X_{i} - \X_{i-1}\|_{\fro}^2 ]$. Then $\EEE[\|\nabla f(\X_t) - \G_t\|^2_{\fro}] \leq \tfrac{L^2}{b} \sum_{i = (r_t-1)q}^{t-1} \EEE[\|\X_{i+1} - \X_i\|^2_{\fro}]$.
\end{lemma}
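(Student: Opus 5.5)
The plan is a one-step martingale-difference expansion of the {\sf SPIDER} error, followed by telescoping over the current epoch. Fix $t$ with $\underline{r_t}<t\le\overline{r_t}$, so that $t\not\equiv 0 \pmod q$ and $\G_t=\G_{t-1}+\nabla f(\X_t;\xi_t)-\nabla f(\X_{t-1};\xi_t)$. Define the centered increment
\[
\boldsymbol{\zeta}_t\triangleq \nabla f(\X_t;\xi_t)-\nabla f(\X_{t-1};\xi_t)-\bigl(\nabla f(\X_t)-\nabla f(\X_{t-1})\bigr).
\]
Subtracting $\nabla f(\X_t)$ from both sides of the recursion gives the exact identity
\[
\G_t-\nabla f(\X_t)=\bigl(\G_{t-1}-\nabla f(\X_{t-1})\bigr)+\boldsymbol{\zeta}_t .
\]

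The first step is to check measurability. Since $\X_t=\Retr_{\X_{t-1}}(-\eta_{t-1}\Q_{t-1})$, and $\eta_{t-1}$, $\Q_{t-1}$ depend only on $\X_0,\xi_0,\ldots,\xi_{t-1}$, both $\X_t$ and $\X_{t-1}$ are $\mathcal{H}_{t-1}$-measurable. The same holds for $\G_{t-1}$. The fresh batch $\xi_t$ is drawn uniformly and independently of $\mathcal{H}_{t-1}$, so $\EEE_t[\boldsymbol{\zeta}_t]=\zero$ by unbiasedness of uniform sampling. Expanding the square and taking $\EEE_t$, the cross term $2\la \G_{t-1}-\nabla f(\X_{t-1}),\EEE_t[\boldsymbol{\zeta}_t]\ra$ therefore vanishes, leaving
\[
\EEE_t\|\G_t-\nabla f(\X_t)\|_{\fro}^2=\|\G_{t-1}-\nabla f(\X_{t-1})\|_{\fro}^2+\EEE_t\|\boldsymbol{\zeta}_t\|_{\fro}^2 .
\]

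The second step, which I expect to be the only delicate point, is bounding $\EEE_t\|\boldsymbol{\zeta}_t\|_{\fro}^2$ by $\tfrac{L^2}{b}\|\X_t-\X_{t-1}\|_{\fro}^2$. Set $\mathbf{d}_i\triangleq\nabla f_i(\X_t)-\nabla f_i(\X_{t-1})$, so that $\boldsymbol{\zeta}_t$ is the centered batch average of the $\mathbf{d}_i$.
\begin{itemize}
\item If the batch is sampled with replacement, the $b$ summands are i.i.d. and zero-mean conditionally on $\mathcal{H}_{t-1}$. The variance of their average is then $\tfrac1b$ times the single-sample variance, which is at most $\tfrac1b\cdot\tfrac1N\sum_i\|\mathbf{d}_i\|_{\fro}^2$.
\item If the batch is sampled without replacement, the standard finite-population correction gives the factor $\tfrac{N-b}{b(N-1)}\le\tfrac1b$ in front of the same population variance.
\end{itemize}
In either case, sample-wise $L$-smoothness (Assumption~\ref{ass:fx}) gives $\|\mathbf{d}_i\|_{\fro}\le L\|\X_t-\X_{t-1}\|_{\fro}$, which yields the claimed bound. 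Taking total expectation via the tower property gives the first inequality of the lemma, in which the index $i$ is read as $t$.

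The final step is to telescope. At $t=\underline{r_t}=(r_t-1)q$ the estimator is refreshed to the full gradient, so $\G_{\underline{r_t}}=\nabla f(\X_{\underline{r_t}})$ and the error there is zero. Summing the one-step inequality over $j=\underline{r_t}+1,\ldots,t$ gives
\[
\EEE\|\G_t-\nabla f(\X_t)\|_{\fro}^2\le\tfrac{L^2}{b}\sum_{j=\underline{r_t}+1}^{t}\EEE\|\X_j-\X_{j-1}\|_{\fro}^2=\tfrac{L^2}{b}\sum_{i=(r_t-1)q}^{t-1}\EEE\|\X_{i+1}-\X_i\|_{\fro}^2,
\]
which is the second claim. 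The case $t=\underline{r_t}$ holds trivially, since the sum is empty and the error is zero.
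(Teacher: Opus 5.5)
Your proposal is correct and follows the standard {\sf SPIDER} argument: a conditionally mean-zero increment, an $L^2/b$ variance bound from component-wise smoothness, and telescoping from the refresh iterate, where the error vanishes. The paper gives no proof of its own and imports the result from \cite[Lemma B.7(a)]{yuan2026adaptive}; the one case you leave unaddressed, the one-step inequality at $t=\underline{r_t}$, is trivial because its left-hand side is nonpositive there.
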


\begin{lemma}[{\cite[Lemma B.9]{yuan2026adaptive}}] \label{lemma:idk}
We define $q' \triangleq q\hat{\eta}^{q-1}$, where $\hat{\eta}$ is given in Lemma \ref{lemma:etat}. For any $t$ satisfying $(r_t-1)q \leq t \leq r_tq-1$, the following results hold:
\begin{enumerate}[label=\textbf{(\alph*)}, leftmargin=18pt, itemsep=2pt, topsep=1pt, parsep=0pt, partopsep=0pt]
		
		\item $\sum_{j=(r_t-1)q}^t [\tfrac{1}{\eta_j}\sum_{i=(r_j-1)q}^{j-1}\|\X_{i+1} - \X_i\|_{\fro}^2] \leq  q'\sum_{i=(r_t-1)q}^{t-1}\tfrac{1}{\eta_i}\|\X_{i+1} - \X_i\|_{\fro}^2$.	

        \item $\sum_{j=(r_t-1)q}^t [\sum_{i=(r_j-1)q}^{j-1}\|\X_{i+1} - \X_i\|_{\fro}^2] \leq  (q-1) \sum_{i=(r_t-1)q}^{t-1}\|\X_{i+1} - \X_i\|_{\fro}^2$.	

         \item $\sum_{t=0}^{T-1} [\sum_{i=(r_t-1)q}^{t-1}\|\X_{i+1} - \X_i\|_{\fro}^2] \leq  (q-1) \sum_{t=0}^{T-1} \|\X_{t+1} - \X_t\|_{\fro}^2$.	
		
\end{enumerate}
\end{lemma}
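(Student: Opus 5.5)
The plan is to interchange the order of the double sums, exploiting the epoch structure. Fix $t$ and set $s\triangleq(r_t-1)q$. For every $j$ with $s\le j\le t$ we have $r_j=r_t$, hence $(r_j-1)q=s$. The inner sum in each part therefore runs over $i\in\{s,\dots,j-1\}$. Exchanging summations gives
\[
\ts\sum_{j=s}^{t}\omega_j\sum_{i=s}^{j-1}\|\X_{i+1}-\X_i\|_{\fro}^2=\sum_{i=s}^{t-1}\|\X_{i+1}-\X_i\|_{\fro}^2\sum_{j=i+1}^{t}\omega_j ,
\]
where $\omega_j=1/\eta_j$ for (a) and $\omega_j=1$ for (b). The inner sum has $t-i\le t-s\le q-1$ terms, since $t\le r_tq-1$.

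\textbf{Part (b).} With $\omega_j=1$, the inner sum is at most $q-1$. This gives (b) immediately.

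\textbf{Part (a).} Here I need a uniform one-step ratio bound for the stepsize, namely $\eta_i/\eta_{i+1}\le\hat{\eta}$. This is the content of the referenced Lemma~\ref{lemma:etat}, which I take as given. Its justification is that $\eta_{i+1}^{-2}=\eta_i^{-2}+\|\Q_i\|_{\fro}^2$, so
\[
\eta_i^2/\eta_{i+1}^2=1+\eta_i^2\|\Q_i\|_{\fro}^2\le 1+\etaup^2\|\Q_i\|_{\fro}^2 .
\]
One then bounds $\|\Q_i\|_{\fro}^2\le 4\|\G_i\|_{\fro}^2$ using the SPIDER recursion within an epoch, with $\|\nabla f\|_{\fro}\le C_f$ and $L$-Lipschitz corrections. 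The displacement terms are controlled by $\|\X_{i+1}-\X_i\|_{\fro}\le\dot k\eta_i\|\Q_i\|_{\fro}$ together with $\eta_i\|\Q_i\|_{\fro}\le 1$, which yields the constant $\hat{\eta}$. Since $\eta$ is nonincreasing, $\hat{\eta}\ge1$. Iterating the ratio bound, for $i<j\le t$,
\[
\tfrac{1}{\eta_j}\le\tfrac{\hat{\eta}^{\,j-i}}{\eta_i}\le\tfrac{\hat{\eta}^{\,q-1}}{\eta_i}.
\]
So the inner sum is at most $(q-1)\hat{\eta}^{q-1}/\eta_i\le q'/\eta_i$, which proves (a).

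\textbf{Part (c).} I split $\{0,\dots,T-1\}$ into its epochs $[\underline{r},\min(\overline{r},T-1)]$. On each epoch I apply (b) with $t$ equal to the last index of that (possibly truncated) epoch. This bounds the epoch's contribution by $(q-1)$ times the sum of $\|\X_{i+1}-\X_i\|_{\fro}^2$ over indices $i$ in that epoch with $i\le T-2$. Summing over the disjoint epochs and enlarging the index range to $\{0,\dots,T-1\}$, using nonnegativity, gives (c).

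The only nontrivial ingredient is the ratio bound $\eta_i/\eta_{i+1}\le\hat{\eta}$ for (a). I will invoke it from the earlier lemma rather than reprove it; the remaining steps are bookkeeping.
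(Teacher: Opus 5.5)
Your proof is correct. Within an epoch, $(r_j-1)q=s$ for all $s\le j\le t$. Exchanging the double sum leaves an inner sum over $j=i+1,\dots,t$ with at most $q-1$ terms, which gives (b) directly. Part (a) then follows by iterating the ratio bound $\tfrac{1}{\eta_j}\le\hat{\eta}^{\,j-i}\tfrac{1}{\eta_i}$ from Lemma~\ref{lemma:etat}; note that $\hat{\eta}\ge1$ is immediate from its definition. The paper does not prove this lemma itself; it imports it from \cite{yuan2026adaptive}. Your epoch-by-epoch splitting for (c), with the last, possibly truncated, epoch ending at $\min\{kq-1,T-1\}$, is the same device the paper uses (applied to part (a)) in the proof of Theorem~\ref{the:it:VR}.

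One correction concerns your parenthetical sketch of Lemma~\ref{lemma:etat}. The claim $\eta_i\|\Q_i\|_{\fro}\le1$ is false, because $\eta_i^{-2}=\etaup^{-2}+\sum_{k<i}\|\Q_k\|_{\fro}^2$ does not contain $\|\Q_i\|_{\fro}^2$; only $\eta_{i+1}\|\Q_i\|_{\fro}\le1$ holds. For example, $\eta_0\|\Q_0\|_{\fro}=\etaup\|\operatorname{grad}f(\X_0)\|_{\fro}$ exceeds $1$ once $\etaup$ is large. The paper instead controls the SPIDER corrections with $\|\X_i-\X_{i-1}\|_{\fro}^2\le 2\|\X_i\|_{\fro}^2+2\|\X_{i-1}\|_{\fro}^2=4r$ on the Stiefel manifold. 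That is why $\hat{\eta}$ contains the factor $r$ and no $\dot{k}$. Because you invoke Lemma~\ref{lemma:etat} as a black box, this does not affect your argument, but the sketch should be removed or replaced by the $4r$ bound.
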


\begin{lemma}\label{lemma:etat}
We define $\tfrac{1}{\eta^2_{t}} \triangleq \tfrac{1}{\eta^2_{t-1}} + \|\Q_{t-1}\|_{\fro}^2$ and $\tilde{\Deltas}_t \triangleq \nabla f(\X_t; \xi_t) - \nabla f(\X_{t-1}; \xi_t)$. For all $t$ with $(r_t-1)q \leq t \leq r_tq-1$, then we have: $\tfrac{1 / \eta_{t+1}}{1 / \eta_{t}} \leq \hat{\eta}$ almost surely, where $\hat{\eta} \triangleq \sqrt{	1+	4q\etaup^2(C_f^2+4(q-1)L^2r)}$.

\begin{proof} 
For the term $\|\tilde{\Deltas}_i\|_{\fro}^2$, we exploit the $L$-smoothness of $f$ and the orthogonality of $\mathbf{X}_i \in \mathrm{St}(n,r)$ (where $\|\mathbf{X}_i\|_{\fro}^2 = r$) to derive the following inequalities almost surely:
\beq
\|\tilde{\Deltas}_i\|_{\fro}^2 &=& \|\nabla f(\X_i; \xi_i) - \nabla f(\X_{i-1}; \xi_i)\|_{\fro}^2 \overset{}{\leq} L^2\|\X_i -\X_{i-1}\|_{\fro}^2 \nn\\
&\overset{}{\leq}&2L^2\|\X_i\|_{\fro}^2 + 2L^2\|\X_{i-1}\|_{\fro}^2 \overset{}{\leq} 4L^2r.\label{eq:tildedelta}
\eeq
\noi Then, we have almost surely:
\beq
(\tfrac{1 / \eta_{t+1}}{1 / \eta_{t}})^2 &=& \tfrac{1 / \eta^2_{t}+ \|\Q_t\|_{\fro}^2}{1 / \eta^2_{t}}} = \sqrt{1 + \tfrac{\|\Q_t\|_{\fro}^2}{1 / \eta^2_{t}} \nn\\
&\overset{\step{1}}{\leq}& 1 + 4\etaup^2\|\nabla f(\X_{(r_t-1)q}) + \ts \sum_{i = (r_t-1)q+1}^t \tilde{\Deltas}_i\|_{\fro}^2\nn\\
&\overset{\step{2}}{\leq} &1 + 4\etaup^2 (t - (r_t-1)q+1) \left(\|\nabla f(\X_{(r_t-1)q})\|_{\fro}^2 + \ts \sum_{i = (r_t-1)q+1}^t \|\tilde{\Deltas}_i\|_{\fro}^2\right) \nn\\
&\overset{\step{3}}{\leq} &\underbrace{
		1+
		4q\etaup^2
		\left(
		C_f^2
		+
		4(q-1)L^2r
		\right)
}_{\triangleq \hat{\eta}^2}, \nn
\eeq
where step \step{1} uses $\eta_t \leq \etaup$ and the definition of $\Q_t$ and $ \|\A - \X\A\trans\X\|_{\fro}^2 \leq  4\|\A\|_{\fro}^2, \forall \X \in \mathrm{St}(n,r)$; step \step{2} uses $\ts\|\sum_{i=1}^t \A_i\|_{\fro}^2 \leq t \cdot \sum_{i=1}^t \|\A_i\|_{\fro}^2 $; step \step{3} follows from the fact that $t - (r_t-1)q +1\leq q$, the assumption $\|\nabla f(\X)\|_{\fro} \leq C_f$, and  $\|\tilde{\Deltas}_i\|_{\fro}^2 \leq 4L^2r$ as shown in \eqref{eq:tildedelta}.
\end{proof} 
\end{lemma}

\begin{lemma}\label{lemma:A3}
	Assume $E_{t+1} \le (1 - \tau_{t+1})E_t + A_{t+1}$, where $\tau_t = t^{-p}$ with $0 < p \le 1$, and $\{E_i, A_i\}$ are two nonnegative sequences. Then we have:
	\begin{equation}
		\tau_t E_t \le 2 (E_t - E_{t+1} + A_{t+1}).\nn
	\end{equation}
\end{lemma}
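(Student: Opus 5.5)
The plan is to start from the hypothesis $E_{t+1}\le(1-\tau_{t+1})E_t+A_{t+1}$ and rearrange it into
\[
\tau_{t+1}E_t\le E_t-E_{t+1}+A_{t+1}.
\]
This already has the right form, but with $\tau_{t+1}$ on the left instead of $\tau_t$. The only remaining task is to compare the two consecutive step weights: I will show $\tau_t\le 2\tau_{t+1}$ for every $t\ge1$. Multiplying by the nonnegative quantity $E_t$ then gives
\[
\tau_tE_t\le 2\tau_{t+1}E_t\le 2(E_t-E_{t+1}+A_{t+1}),
\]
which is the claim.

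For the ratio bound, note that $\tau_t/\tau_{t+1}=\bigl(\tfrac{t+1}{t}\bigr)^p$. Since $0<p\le1$ and $\tfrac{t+1}{t}\ge1$, this is at most $\tfrac{t+1}{t}=1+\tfrac1t\le2$ for $t\ge1$. I will also record the inequality $x^p\le x$ for $x\ge1$ and $p\in(0,1]$ that justifies this step.

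Nonnegativity of $E_t$ is used exactly once, to multiply $\tau_t\le2\tau_{t+1}$ by $E_t$ without reversing the inequality. Nonnegativity of $A_{t+1}$ is not needed in this argument. I will state that the indices run over $t\ge1$, matching the schedules $\alpha_t=t^{-2/3}$ and $\beta_t=t^{-1/2}$ used later, so that $\tau_t$ is well defined; the case $t=0$ is excluded since $\tau_0$ is undefined. There is no real obstacle here: the only care needed is the direction of the inequality in the ratio step, which holds because $p\le 1$.
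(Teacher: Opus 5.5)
Your proposal is correct and follows the paper's proof: rearrange the hypothesis to $\tau_{t+1}E_t \le E_t - E_{t+1} + A_{t+1}$ and use $\tau_t/\tau_{t+1} = (1+1/t)^p \le 2$ for $t \ge 1$. Your ordering, applying the ratio bound first to get $\tau_t E_t \le 2\tau_{t+1}E_t$, is marginally cleaner, since it relies only on $E_t \ge 0$. The paper's ordering implicitly needs the right-hand side $E_t - E_{t+1} + A_{t+1}$ to be nonnegative.
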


\begin{proof}
	Rearranging the terms in the given assumption $E_{t+1} \le (1 - \tau_{t+1})E_t + A_{t+1}$, we first obtain an upper bound for $\tau_{t+1}E_t$:
	\begin{equation} \label{eq:tau_next_E}
		\tau_{t+1}E_t \le E_t - E_{t+1} + A_{t+1}.
	\end{equation}
	Since the step sizes $\tau_t$ and $\tau_{t+1}$ are strictly positive, we can express the target term $\tau_t E_t$ by introducing the ratio $\tfrac{\tau_t}{\tau_{t+1}}$:
	\begin{equation} 
		\tau_t E_t = \tfrac{\tau_t}{\tau_{t+1}} (\tau_{t+1} E_t) \overset{\step{1}}{\leq} \tfrac{\tau_t}{\tau_{t+1}} (E_t - E_{t+1} + A_{t+1}) \overset{\step{2}}{\leq} 2(E_t - E_{t+1} + A_{t+1}),\nn
	\end{equation}
	where the step \step{1} follows directly from \eqref{eq:tau_next_E}; step \step{2} uses $	\tfrac{\tau_t}{\tau_{t+1}} = \left( 1 + \tfrac{1}{t} \right)^p \le 2^p \leq 2$.
\end{proof}

\begin{lemma}\label{lemma:A5}
	Assume $E_{t+1} \le (1 - \tau_{t+1})E_t + A_{t+1}$, where $\tau_t = t^{-p}$ with $p \in (0, 1]$, and $\{E_i, A_i\}$ are two nonnegative sequences. Then we have:
	\begin{equation*}
		\sqrt{\tau_t} E_t \le 4 \left( \tfrac{E_t}{\sqrt{\tau_t}} - \tfrac{E_{t+1}}{\sqrt{\tau_{t+1}}} + \tfrac{A_{t+1}}{\sqrt{\tau_{t+1}}} \right).\nn
	\end{equation*}
\end{lemma}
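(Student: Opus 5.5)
The plan is to work directly from the recursion, after rescaling it by $1/\sqrt{\tau_{t+1}}$, and reduce the claim to a deterministic scalar inequality in $t$ and $p$. Dividing $E_{t+1}\le (1-\tau_{t+1})E_t + A_{t+1}$ by $\sqrt{\tau_{t+1}}>0$ and rearranging gives
\begin{equation*}
\tfrac{E_t}{\sqrt{\tau_t}} - \tfrac{E_{t+1}}{\sqrt{\tau_{t+1}}} + \tfrac{A_{t+1}}{\sqrt{\tau_{t+1}}}
\;\geq\; E_t\Big(\tfrac{1}{\sqrt{\tau_t}} - \tfrac{1}{\sqrt{\tau_{t+1}}} + \sqrt{\tau_{t+1}}\Big).
\end{equation*}
Since $E_t\ge 0$, it then suffices to prove the deterministic inequality
\begin{equation*}
\tfrac{1}{\sqrt{\tau_t}} - \tfrac{1}{\sqrt{\tau_{t+1}}} + \sqrt{\tau_{t+1}} \;\geq\; \tfrac14\sqrt{\tau_t}
\end{equation*}
for all integers $t\ge1$ and all $p\in(0,1]$. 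This is the same mechanism as in Lemma~\ref{lemma:A3}; the only extra work is the scalar estimate.

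To verify the scalar inequality, substitute $\tau_t=t^{-p}$ and multiply through by $(t+1)^{p/2}$. Introduce $s\triangleq(1+1/t)^{p/2}\in(1,\sqrt2]$. We have $t^{p/2}(t+1)^{p/2}-(t+1)^p=-t^p s(s-1)$, and the right-hand side becomes $(t+1)^{p/2}t^{-p/2}/4=s/4$. So the target is equivalent to
\begin{equation*}
1 - t^p s(s-1) \;\geq\; \tfrac{s}{4}.
\end{equation*}

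\textbf{Case $t\ge2$.} Bernoulli's inequality (valid because $p/2\le1$) gives $s-1\le \tfrac{p}{2t}$. Hence $t^p s(s-1)\le \tfrac{p}{2}t^{p-1}s\le \tfrac{s}{2}$, using $t^{p-1}\le1$ and $p\le1$. It therefore suffices that $1-\tfrac{s}{2}\ge\tfrac{s}{4}$, that is, $s\le 4/3$. This holds because $s\le (3/2)^{1/2}<4/3$ when $t\ge2$.

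\textbf{Case $t=1$.} This is the main obstacle: the Bernoulli bound is too crude here, since for $p=1$ it would require $\sqrt2\le4/3$, which is false. I will instead check it exactly. With $t=1$ and $s=2^{p/2}\in(1,\sqrt2]$, the condition reads $g(s)\triangleq 1+\tfrac34 s-s^2\ge0$. The function $g$ is concave with $g(1)=\tfrac34>0$ and $g(\sqrt2)=\tfrac34\sqrt2-1>0$, so $g\ge0$ on the whole interval $[1,\sqrt2]$.

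Combining the two cases gives the scalar inequality for all $t\ge1$. Multiplying it by $E_t\ge0$ and applying the first display yields the claim with constant $4$. Nonnegativity of $A_{t+1}$ is not even needed beyond its role in the hypothesis.
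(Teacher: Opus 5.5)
Your proof is correct and follows the paper's route: you divide the recursion by $\sqrt{\tau_{t+1}}$ and reduce the claim to the scalar inequality $\frac{1}{\sqrt{\tau_t}}-\frac{1}{\sqrt{\tau_{t+1}}}+\sqrt{\tau_{t+1}}\ge\frac14\sqrt{\tau_t}$, exactly as the paper does. The only difference is in the scalar check. The paper writes $(t+1)^{p/2}-t^{p/2}=\frac{(t+1)^p-t^p}{(t+1)^{p/2}+t^{p/2}}$ and applies the mean value theorem, which in your notation gives $s-1\le\frac{p}{t(s+1)}$ rather than your Bernoulli bound $s-1\le\frac{p}{2t}$; this extra factor $2/(s+1)$ lets the paper handle $t=1$ uniformly, whereas you need the separate exact computation at $t=1$.
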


\begin{proof}
	Dividing both sides of the given assumption $E_{t+1} \le (1 - \tau_{t+1})E_t + A_{t+1}$ by $\sqrt{\tau_{t+1}}$, we obtain:
	\beq
		\ts\sqrt{\tau_{t+1}}E_t &\leq&	\ts\tfrac{E_t}{\sqrt{\tau_{t+1}}} - \tfrac{E_{t+1}}{\sqrt{\tau_{t+1}}} + \tfrac{A_{t+1}}{\sqrt{\tau_{t+1}}}\nn\\
		\Rightarrow \ts E_t \underbrace{\left( \tfrac{1}{\sqrt{\tau_t}} - \tfrac{1}{\sqrt{\tau_{t+1}}} + 	\ts \sqrt{\tau_{t+1}} \right)}_{\triangleq C_t} &\leq& 	\ts\tfrac{E_t}{\sqrt{\tau_t}} - \tfrac{E_{t+1}}{\sqrt{\tau_{t+1}}} + \tfrac{A_{t+1}}{\sqrt{\tau_{t+1}}}.\nn
	\eeq

	Our next goal is to establish a lower bound for $C_t$ that $ \tfrac{1}{4} \sqrt{\tau_t}\le C_t $. Substituting $\tau_t = t^{-p}$, the lower bound is equivalent to:
	\begin{equation} \label{eq:algebraic_claim}
		(t+1)^{p/2} - t^{p/2} \le (t+1)^{-p/2} - \tfrac{1}{4} t^{-p/2}.
	\end{equation}
	Using $x - y = \tfrac{x^2 - y^2}{x + y}$, the left-hand side (LHS) of \eqref{eq:algebraic_claim} becomes: $\text{LHS} = \tfrac{(t+1)^p - t^p}{(t+1)^{p/2} + t^{p/2}}$. Since $p \in (0, 1]$, the function $f(x) = x^p$ is concave. By the Mean Value Theorem, $(t+1)^p - t^p \le p t^{p-1} \le t^{p-1}$. Thus, we can bound the LHS as follows:
	\begin{equation}
		\text{LHS} \le \tfrac{t^{p-1}}{(t+1)^{p/2} + t^{p/2}} = t^{-p/2} \tfrac{t^{p-1+p/2}}{(t+1)^{p/2} + t^{p/2}} = t^{-p/2} \tfrac{t^{p-1}}{(1+1/t)^{p/2} + 1}.\nn
	\end{equation}
	Given $t \ge 1$ and $p \le 1$, we have $t^{p-1} \le 1$.  To prove \eqref{eq:algebraic_claim}, we just need to verify:
	\beq \label{eq:v_ineq}
		\tfrac{t^{-p/2}}{(1+1/t)^{p/2} + 1}	&\le& (t+1)^{-p/2} - \tfrac{1}{4} t^{-p/2}\nn\\
		\Rightarrow\tfrac{1}{(1+1/t)^{p/2} + 1} &\le& (1+1/t)^{-p/2} - \tfrac{1}{4}.
	\eeq
	Let $v = (1+1/t)^{-p/2}$. Since $t \ge 1$ and $p \le 1$, then $1+1/t \leq 2$,  $p/2 \leq 1/2$, and $v \ge 2^{-1/2} = 1/\sqrt{2}$. 
	Substituting $v$ into \eqref{eq:v_ineq} (noting $(1+1/t)^{p/2} = 1/v$), we get:
	\beq
		\tfrac{1}{1/v + 1} = \tfrac{v}{v+1} \le v - \tfrac{1}{4}.\nn
	\eeq
	Since $v > 0$, multiplying both sides by $(v+1)$ yields:
	\beq
		v \le v^2 + \tfrac{3}{4}v - \tfrac{1}{4} \implies v^2 - \tfrac{1}{4}v - \tfrac{1}{4} \ge 0.\nn
	\eeq
	The positive root of the quadratic equation $x^2 - 0.25x - 0.25 = 0$ is $\tfrac{1+\sqrt{17}}{8}$. Since our variable $v \geq 1/\sqrt{2} > \tfrac{1+\sqrt{17}}{8}$, the quadratic inequality strictly holds for all valid $t$ and $p$.	Finally, having proved $\tfrac{1}{4} \sqrt{\tau_t}\leq C_t$, we conclude that	$$
		\tfrac{1}{4} \sqrt{\tau_t} E_t \le \tfrac{E_t}{\sqrt{\tau_t}} - \tfrac{E_{t+1}}{\sqrt{\tau_{t+1}}} + \tfrac{A_{t+1}}{\sqrt{\tau_{t+1}}}.	$$
\end{proof}

\noi The following two lemmas concerning the retraction operator is useful for our subsequent analysis.

\section{Proof of Section \ref{sec:com}}\label{app:sect:proof}

\subsection{Proof of Lemma \ref{prop:VR}}\label{app:prop:VR}
Before the proof, we define $\XX_t \triangleq f(\X_t)- f(\bar{\X})$, $\AA_t \triangleq \eta_t\|\Q_t\|_{\fro}^2$, $\BB_t \triangleq \eta_t^2\|\Q_t\|_{\fro}^2$, and $\SS_t \triangleq \|\G_t - \nabla f(\X_t)\|_{\fro}^2$.
\begin{proof}
	
	We define: $\Deltas_t \triangleq -\eta_t \Q_t$. Using the $L$-smooth Lipschitz property, we have the following inequalities almost surely:
	\beq
	&&f(\X_{t+1}) - f(\X_t) \nn\\
	&\leq&   \la \X_{t+1} - \X_t, \nabla f(\X_t) \ra + \tfrac{L}{2}\|\X_{t+1} - \X_t\|_{\fro}^2 \nn\\
	& \overset{\step{1}}{=} &  \la \Retr_{\X_t}(\Deltas_t) - \X_t, \nabla f(\X_t) \ra + \tfrac{L}{2}\|\Retr_{\X_t}(\Deltas_t) - \X_t\|_{\fro}^2\nn\\
	& = &  \la \Retr_{\X_t}(\Deltas_t) - \X_t - \Deltas_t, \nabla f(\X_t) \ra + \la  \Deltas_t, \nabla f(\X_t) \ra +  \tfrac{L}{2} \|\Retr_{\X_t} (\Deltas_t) - \X_t\|_{\fro}^2 \nn\\
	&\overset{\step{2}}{\leq}&  \la \Retr_{\X_t}(\Deltas_t) - \X_t - \Deltas_t, \nabla f(\X_t) \ra + \la  \Deltas_t, \nabla f(\X_t) \ra + \tfrac{\dot{k}^2L}{2}\|\Deltas_t\|_{\fro}^2  \nn\\
	&\overset{\step{3}}{\leq}&  \|\nabla f(\X_t)\|_{\fro} \|\Retr_{\X_t}(\Deltas_t) - \X_t - \Deltas_t\|_{\fro} + \la  \Deltas_t, \nabla f(\X_t) \ra + \tfrac{\dot{k}^2L}{2}\|\Deltas_t\|_{\fro}^2  \nn\\
	&\overset{\step{4}}{\leq}& \tfrac{C_f\ddot{k} + \dot{k}^2L}{2}\|\Deltas_t\|_{\fro}^2 + \la  \Deltas_t, \nabla f(\X_t) \ra,\label{eq:lproperty}
	\eeq
	where step \step{1} uses $\X_{t+1} = \Retr_{\X_t}(\Deltas_t)$; step \step{2} uses Lemma \ref{lemma:M12}; step \step{3} uses $\la \A,\B\ra \leq \|\A\|_{\fro} \|\B\|_{\fro}$; step \step{4} uses Lemma \ref{lemma:M12}.

	We now bound the term $\la  \Deltas_t, \nabla f(\X_t) \ra$. We derive:
	\beq
	&& \la  \Deltas_t, \nabla f(\X_t) \ra \nn\\
	&=&  \la  \Deltas_t, \G_t \ra + \la  \Deltas_t, \nabla f(\X_t) -\G_t \ra  \nn\\
	&=&   -\eta_t \la \Q_t, \G_t \ra + \eta_t \la\G_t - \X_t \G_t \trans \X_t, \G_t- \nabla f(\X_t)  \ra  \nn\\
	& \overset{\step{1}}{\leq} &   -\tfrac{\eta_t}{2} \|\Q_t\|_{\fro}^2  + \eta_t \la\G_t - \X_t \G_t \trans \X_t, \G_t- \nabla f(\X_t)  \ra  \nn\\
	& = &   -\tfrac{\eta_t}{4} \|\Q_t\|_{\fro}^2  +  \eta_t \left(  \la\G_t - \X_t \G_t \trans \X_t, \G_t- \nabla f(\X_t)  \ra - \tfrac{1}{4} \|\G_t - \X_t \G_t \trans \X_t\|_{\fro}^2\right)  \nn\\
	& \overset{\step{2}}{\leq}   &   - \tfrac{\eta_t}{4} \|\Q_t\|_{\fro}^2  +  \eta_t \left(  \|\G_t - \X_t \G_t \trans \X_t\|_{\fro} \cdot \|\G_t- \nabla f(\X_t)\|_{\fro} - \tfrac{1}{4} \|\G_t - \X_t \G_t \trans \X_t\|_{\fro}^2\right)  \nn\\
	& \overset{\step{3}}{\leq}  &   - \tfrac{\eta_t}{4} \|\Q_t\|_{\fro}^2  +  \etaup \|\G_t- \nabla f(\X_t)\|_{\fro}^2,\label{eq:delta}
	\eeq
	where step \step{1} uses Lemma \ref{lemma:GD:bound}; step \step{2} uses $\la \A,\B\ra \leq \|\A\|_{\fro} \|\B\|_{\fro}$; step \step{3} uses $ax - \tfrac{1}{4}x^2\leq a^2$ and $\eta_t \leq \etaup$. Combining \eqref{eq:lproperty} and \eqref{eq:delta}, we have almost surely:
	\beq
	f(\X_{t+1}) - f(\X_t) &\overset{}{\leq}& \tfrac{C_f\ddot{k} + \dot{k}^2L}{2}\|\Deltas_t\|_{\fro}^2  - \tfrac{\eta_t}{4} \|\Q_t\|_{\fro}^2  +  \etaup \|\G_t- \nabla f(\X_t)\|_{\fro}^2 \nn\\
	&\overset{\step{1}}{=}& \tfrac{(C_f\ddot{k} + \dot{k}^2L)\eta_t^2}{2}\|\Q_t\|_{\fro}^2 - \tfrac{\eta_t}{4} \|\Q_t\|_{\fro}^2  +  \etaup \|\G_t- \nabla f(\X_t)\|_{\fro}^2,\label{eq1}
	\eeq
	where step \step{1} uses the definition: $\Deltas_t \triangleq -\eta_t \Q_t$. Then using the definition of $\XX_t$ gives almost surely:
	
	\beq
	\XX_{t+1} - \XX_t \overset{}{\leq} \underbrace{\tfrac{(C_f\ddot{k} + \dot{k}^2L)}{2}}_{c_1} \cdot \underbrace{\eta_t^2\|\Q_t\|_{\fro}^2 }_{\BB_t}  - \tfrac{1}{4} \cdot \underbrace{\eta_t\|\Q_t\|_{\fro}^2}_{\AA_t} +   \etaup  \underbrace{\|\G_t- \nabla f(\X_t)\|_{\fro}^2}_{\SS_t}. \nn
	\eeq
\end{proof}

\subsection{Proof of Lemma \ref{lemma:AB}}\label{app:lemma:AB}

We define $\hat{\eta} \triangleq \sqrt{	1+	4q\etaup^2(C_f^2+4(q-1)L^2r)}$.

\begin{proof}

	\textbf{Part (a)}. We define $\eta_T \triangleq \tfrac{1}{ \sqrt{1 / \etaup^2 + \sum_{t=0}^{T-1} \|\Q_t\|_{\fro}^2 }}$. Then we derive the following results almost surely:
	\beq\label{eq:AA}
	\ts \sum_{t=0}^{T-1} \eta_t\|\Q_t\|^2_{\fro} &\overset{\step{1}}{\leq}& \sum_{t=0}^{T-1} \hat{\eta} \eta_{t+1}\|\Q_t\|^2_{\fro}\overset{}{=} \ts  \sum_{t=0}^{T-1}  \tfrac{ \hat{\eta}\|\Q_t\|^2_{\fro} }{ \sqrt{1 / \etaup^2 + \sum^t_{i=0}\|\Q_i\|^2_{\fro}} }  \nn\\
	&\overset{\step{1}}{\leq} & \ts 2 \hat{\eta} \underbrace{\sqrt{1 / \etaup^2 + \sum_{t=0}^{T-1} \|\Q_t\|_{\fro}^2 }}_{1 / \eta_T},\nn
	\eeq
	\noi where step \step{1} uses Lemma \ref{lemma:etat} and step \step{2} uses Lemma \ref{lemma:online}.

	\noi\textbf{Part (b)}. We have the following results almost surely:
	\beq\label{eq:ZZ:bound:pre::D}
	\ts \sum_{t=0}^{T-1} \eta_t^2\|\Q_t\|^2_{\fro} & \overset{\step{1}}{\leq} & \ts \sum_{t=0}^{T-1} \hat{\eta}^2\eta_{t+1}^2\|\Q_t\|^2_{\fro}  \overset{}{=}  \ts  \sum_{t=0}^{T-1} \tfrac{\hat{\eta}^2\|\Q_t\|_{\fro}^2}{1 / \etaup^2 + \sum^t_{i=0}\|\Q_i\|^2_{\fro}} \nn\\
	&\overset{\step{2}}{\leq} & \ts  \tfrac{\hat{\eta}^2(1 / \etaup^2 + \sum_{t=0}^{T-1} \|\Q_t\|_{\fro}^2)^{1/4}}{{1/4} \cdot (1 / \etaup^2)^{1/4}} 
	\overset{}{=}  \ts  4\hat{\eta}^2\sqrt{\etaup}\cdot \underbrace{(1 / \etaup^2 + \ts \sum_{t=0}^{T-1} \|\Q_t\|_{\fro}^2)^{1/4}}_{1 / \sqrt{\eta_{T}}},\nn
	\eeq
	\noi where step \step{1} uses Lemma \ref{lemma:etat}; step \step{2} uses Lemma \ref{lemma:online:pp} with $p=1/4$. 
\end{proof}

\subsection{Proof of Lemma \ref{prop:V}}\label{app:prop:V}
Before the proof, we define $\XX_t \triangleq f(\X_t)- f(\bar{\X})$, $\AA_t \triangleq \eta_t\|\Q_t\|_{\fro}^2$, $\BB_t \triangleq \eta_t^2\|\Q_t\|_{\fro}^2$, $\SS_t \triangleq \|\G_t- \nabla f(\X_t)\|_{\fro}^2$, and $\eta_T \triangleq \tfrac{1}{ \sqrt{1 / \etaup^2 + \sum_{t=0}^{T-1} \|\Q_t\|_{\fro}^2 }}$.

\begin{proof}
    From Lemma \ref{prop:VR}, we have the following results almost surely:
	\beq
	\XX_{t+1} - \XX_t \overset{}{\leq} \underbrace{\tfrac{(C_f\ddot{k} + \dot{k}^2L)}{2}}_{c_1} \cdot \underbrace{\eta_t^2\|\Q_t\|_{\fro}^2 }_{\BB_t}  - \tfrac{1}{4} \cdot \underbrace{\eta_t\|\Q_t\|_{\fro}^2}_{\AA_t} + \etaup  \underbrace{\|\G_t- \nabla f(\X_t)\|_{\fro}^2}_{\SS_t}. \label{adad}
	\eeq

\noi Then we derive
\beq
\ts \EEE[\sum_{t=0}^{T-1} \|\G_t- \nabla f(\X_t)\|^2_{\fro}] &\overset{\step{1}}{\leq}& \ts\tfrac{L^2}{b}\sum_{t=0}^{T-1} \sum_{i=(r_t-1)q}^{t-1} \EEE[\|\X_{i+1} - \X_i\|^2_{\fro}] \nn\\
&\overset{\step{2}}{\leq}& \tfrac{L^2(q-1)}{b} \ts \sum_{t=0}^{T-1}  \EEE[\|\X_{t+1} - \X_t\|^2_{\fro}] \nn\\
&\overset{\step{3}}{\leq}& \tfrac{\dot{k}^2L^2(q-1)}{b} \ts \sum_{t=0}^{T-1}  \EEE[\BB_t]  \nn\\
&\overset{\step{4}}{\leq}& \tfrac{4\hat{\eta}^2\sqrt{\bar{\eta}}\dot{k}^2L^2(q-1)}{b}  \EEE[\tfrac{1}{\sqrt{\eta_{T}}}],   \label{acac}
\eeq
where step \step{1} uses Lemma \ref{lemma:pq}; step \step{2} uses Lemma \ref{lemma:idk} \textbf{(c)}; step \step{3} uses Lemma \ref{lemma:M12} that $\|\Retr_{\X}(\Deltas)-\X\|_{\fro}\leq \dot{k} \|\Deltas\|_{\fro}$ and the definition of $\BB_t$; step \step{4} uses Lemma \ref{lemma:AB} \textbf{(b)}. Summing \ref{adad} over $t = 0,\cdots,T-1$ and taking expectations yields:
    \beq
    \EEE[\XX_T]  &\leq&   \EEE[\XX_0 +  \ts \sum_{t=0}^{T-1} \left( c_1 \BB_t -  \tfrac{1}{4} \AA_t + \etaup \SS_t \right)]\nn\\
    &\overset{\step{1}}{\leq}&  \EEE[\XX_0 +  \ts \sum_{t=0}^{T-1} \left(  -  \tfrac{1}{4} \AA_t + \etaup \SS_t \right) + 4\hat{\eta}^2\sqrt{\bar{\eta}}c_1 \tfrac{1}{\sqrt{\eta_{T}}}]  \nn\\
    &\overset{\step{2}}{\leq}& \EEE[\XX_0] - \tfrac{1}{2}(\EEE[\tfrac1{\eta_T}-\tfrac1{\etaup}]) + \EEE[\ts \sum_{t=0}^{T-1}   \etaup \SS_t  + 4\hat{\eta}^2\sqrt{\bar{\eta}}c_1 \tfrac{1}{\sqrt{\eta_{T}}}]  \nn\\
    &\overset{\step{3}}{\leq}& \EEE[\XX_0] - \tfrac{1}{2}(\EEE[\tfrac1{\eta_T}-\tfrac1{\etaup}]) + \underbrace{4\hat{\eta}^2\sqrt{\bar{\eta}}(\tfrac{\bar{\eta}\dot{k}^2L^2(q-1)}{b} + c_1)}_{c_1'}\EEE[\tfrac{1}{\sqrt{\eta_{T}}}],
    \eeq
where step \step{1} uses Lemma \ref{lemma:AB} \textbf{(b)}; step \step{2} uses the facts that $\AA_t = \eta_t\|\Q_t\|_{\fro}^2 = \eta_t(\tfrac{1}{\eta_{t+1}^2} - \tfrac{1}{\eta_t^2})\geq \tfrac{2}{\eta_{t+1}} - \tfrac{2}{\eta_t}$ and $ \sum_{t=0}^{T-1}\AA_t\geq 2\left(
\tfrac1{\eta_T}-\tfrac1{\etaup} \right) $; step \step{3} uses Inequality \eqref{acac}.

    Set $w_1 \triangleq 2\EEE[\XX_0]+\tfrac1{\etaup}$ and $w_2 \triangleq 2c_1'$, and use the Jensen's inequality $\EEE[\tfrac{1}{\sqrt{\eta_{T}}}]  \leq \sqrt{\EEE[\tfrac{1}{\eta_{T}}]}$ then we have:
    \beq
        \EEE[\tfrac{1}{\eta_{T}}] + 2\EEE[\XX_T] \leq w_1 + w_2 \sqrt{\EEE[\tfrac{1}{\eta_{T}}]} \label{aeae}
    \eeq
    Dropping the non-negative term $2\EEE[\XX_T]$ from  \eqref{aeae} and using $w_2\sqrt{\EEE[\tfrac{1}{\eta_{T}}]} \leq (\EEE[\tfrac{1}{\eta_{T}}] + w_2^2) /2$, we obtain:
    \beq
        \EEE[\tfrac{1}{\eta_{T}}] \leq 2w_1 + w_2^2 \triangleq \tfrac{1}{\underline{\eta}}.  \nn
    \eeq
    Dropping the non-negative term $\EEE[\tfrac{1}{\eta_{T}}]$ from  \eqref{aeae} and using $\EEE[\tfrac{1}{\eta_{T}}] \leq \tfrac{1}{\eta}$, we have
    \begin{align}
    \EEE[\XX_T]
    &\leq \tfrac{w_1}{2}  + \tfrac{w_2}{2}
    \sqrt{\EEE\!\left[\tfrac1{\eta_T}\right]} \leq \tfrac{w_1}{2} +  \tfrac{w_2}{2\sqrt{\underline{\eta}}} \triangleq \overline{\XX}.\nn
    \end{align}
\end{proof}

\subsection{Proof of Theorem \ref{the:it:VR}}\label{app:the:it:VR}
Before the proof, we define $\XX_t \triangleq f(\X_t)- f(\bar{\X})$, $\AA_t \triangleq \eta_t\|\Q_t\|_{\fro}^2$, $\BB_t \triangleq \eta_t^2\|\Q_t\|_{\fro}^2$, $\SS_t \triangleq \|\G_t- \nabla f(\X_t)\|_{\fro}^2$, and $\eta_T \triangleq \tfrac{1}{ \sqrt{1 / \etaup^2 + \sum_{t=0}^{T-1} \|\Q_t\|_{\fro}^2 }}$.
\begin{proof}

	\noi \textbf{Part (a).}  First, we derive $\sum_{t=0}^{T-1}	\EEE\!\left[\|\Q_t\|_{\fro}^2\right]$ is bounded by a positive constant. Before that, from Lemma \ref{lemma:idk} \textbf{(a)}, we have $\sum_{j=(r_t-1)q}^t [\tfrac{1}{\eta_j}\sum_{i=(r_j-1)q}^{j-1}\|\X_{i+1} - \X_i\|_{\fro}^2] \leq  q'\sum_{i=(r_t-1)q}^{t-1}\tfrac{1}{\eta_i}\|\X_{i+1} - \X_i\|_{\fro}^2$. Let $K\triangleq\lceil T/q\rceil$ and define $t_k\triangleq \min\{kq-1,T-1\},
    \ k=1,\ldots,K.$ Then $\{0,\ldots,T-1\}$ can be partitioned into $K$ epochs, where the last epoch may be incomplete. Applying part \textbf{(a)} in Lemma \ref{lemma:idk} to each epoch and summing the resulting inequalities yield:
    \begin{align}
        &\ts\sum_{j=0}^{T-1}
        \tfrac{1}{\eta_j}
        \ts\sum_{i=(r_j-1)q}^{j-1}
        \|\X_{i+1}-\X_i\|_{\fro}^{2}
        \nonumber\\
        &=
        \ts\sum_{k=1}^{K}
        \ts\sum_{j=(k-1)q}^{t_k}
        \ts\tfrac{1}{\eta_j}
        \ts\sum_{i=(k-1)q}^{j-1}
        \|\X_{i+1}-\X_i\|_{\fro}^{2}
        \nonumber\\
        &\leq
        q'
        \ts\sum_{k=1}^{K}
        \ts\sum_{i=(k-1)q}^{t_k-1}
        \ts\tfrac{1}{\eta_i}
        \|\X_{i+1}-\X_i\|_{\fro}^{2}
        \nonumber\\
        &\leq
        q'
        \ts\sum_{i=0}^{T-1}
        \ts\tfrac{1}{\eta_i}
        \|\X_{i+1}-\X_i\|_{\fro}^{2}. \label{aaaeee}
    \end{align}
    Then, we derive the following results from \eqref{aaaeee}
    \beq
    \ts \EEE[\sum_{t=0}^{T-1} \tfrac{1}{\eta_t}\SS_t] &\overset{\step{1}}{\leq}& \EEE[\ts\tfrac{L^2}{b}\sum_{t=0}^{T-1}\left( \tfrac{1}{\eta_t} \cdot \sum_{i=(r_t-1)q}^{t-1} \|\X_{i+1} - \X_i\|^2_{\fro}\right)] \nn\\
    &\overset{\step{2}}{\leq}& \tfrac{L^2q'}{b} \ts \sum_{t=0}^{T-1}  \EEE[\tfrac{1}{\eta_t}\|\X_{t+1} - \X_t\|^2_{\fro}] \nn\\
    &\overset{\step{3}}{\leq}& \tfrac{\dot{k}^2L^2q'}{b} \ts \sum_{t=0}^{T-1}  \EEE[\AA_t],
    \eeq
    where step \step{1} uses Lemma \ref{lemma:pq}; step \step{2} uses Lemma \ref{lemma:idk} \textbf{(b)}; step \step{3} uses Lemma \ref{lemma:M12} that $\|\Retr_{\X}(\Deltas)-\X\|_{\fro}\leq \dot{k} \|\Deltas\|_{\fro}$ and the definition of $\AA_t$. 
	
	Multiplying both sides of Inequality~\eqref{adad} by $1/\eta_t$,
taking the total expectation, and summing over $t=0,\ldots,T-1$,
we obtain
	\beq\label{eq:second-moment-start}
	\ts\tfrac{1}{4}\sum_{t=0}^{T-1}	\EEE\!\left[\|\Q_t\|_{\fro}^2\right] 
	&\leq& \ts\EEE\!\left[\sum_{t=0}^{T-1}\tfrac{f(\X_t)-f(\X_{t+1})}{\eta_t} +  \sum_{t=0}^{T-1} \left(c_1\AA_t + \tfrac{\etaup}{\eta_t} \SS_t \right)\right] \nn\\	
    &\leq&	\ts\EEE\!\left[\sum_{t=0}^{T-1}\tfrac{f(\X_t)-f(\X_{t+1})}{\eta_t}+ (c_1+\tfrac{q'\etaup L^2\dot{k}^2}{b}) \cdot \sum_{t=0}^{T-1}\AA_t\right]\nn\\
	&\leq&\ts\EEE\!\left[\sum_{t=0}^{T-1}\tfrac{f(\X_t)-f(\X_{t+1})}{\eta_t}\right]+ 2\hat{\eta}(c_1+\tfrac{q'\etaup L^2\dot{k}^2}{b}) \EEE\!\left[\tfrac{1}{\eta_T}\right].
	\eeq
	
	\noi Since $\mathrm{St}(n,r)$ is compact and $f$ is
	$C_f$-Lipschitz continuous, for every $\X\in\mathrm{St}(n,r)$, $
	0
	\leq
	f(\X)-f(\bar{\X})
	\leq
	C_f\|\X-\bar{\X}\|_{\fro}
	\leq
	2C_f\sqrt{r}.
	$ Since $\{1/\eta_t\}_{t\geq0}$ is nondecreasing, applying
	Lemma~\ref{lemma:remove:log} pathwise with $
	A_t\triangleq\tfrac{1}{\eta_t},
	\
	B_t\triangleq f(\X_t)-f(\bar{\X})
	$
	yields
	\begin{equation}\label{eq:weighted-function-bound}
		\ts \EEE\!\left[\sum_{t=0}^{T-1}\tfrac{f(\X_t)-f(\X_{t+1})}{\eta_t}\right] 	\leq \EEE\!\left[ \tfrac{\max_{0\leq t\leq T-1}f(\X_t)-f(\bar{\X})}{\eta_{T-1}}\right] 
		\leq \EEE\!\left[\tfrac{2C_f\sqrt{r}}{\eta_{T-1}}\right] \leq
		\EEE\!\left[\tfrac{2C_f\sqrt{r}}{\eta_T}\right].
	\end{equation}

	\noi Combining \eqref{eq:second-moment-start} and \eqref{eq:weighted-function-bound}, we obtain
	\begin{align}
		\ts \sum_{t=0}^{T-1}	\EEE\!\left[\|\Q_t\|_{\fro}^2\right] \leq 4\underbrace{\left(2\hat{\eta}(c_1+\tfrac{q'\etaup L^2\dot{k}^2}{b}) + 2C_f\sqrt{r}  \right)}_{\triangleq C_Q} \EEE\!\left[\tfrac{1}{\eta_T}\right] \leq \tfrac{4C_Q}{\underline{\eta}}.\label{ss1}
	\end{align}


	\noi Then, we derive:
	\beq
	&& \tfrac{1}{T}  \ts \sum_{t=0}^{T-1}  \EEE[\|\operatorname{grad}f(\X_t)\|_{\fro}^2]\nn\\
	&\overset{}{\leq}&\ts \ts \tfrac{1}{T} \sum_{t=0}^{T-1} 2\left( \EEE[\|\operatorname{grad}f(\X_t) - \Q_t\|_{\fro}^2] + \EEE[\|\Q_t\|_{\fro}^2] \right)\nn\\
	&\overset{\step{1}}{\leq}& \ts \tfrac{1}{T}  \sum_{t=0}^{T-1} 8\EEE[\|\nabla f(\X_t) - \G_t\|_{\fro}^2] +  \tfrac{1}{T} \ts \sum_{t=0}^{T-1} 2\EEE[\|\Q_t\|_{\fro}^2]\nn\\
	&\overset{\step{2}}{\leq}& \ts \tfrac{1}{T}  \cdot \tfrac{32L^2\dot{k}^2\hat{\eta}^2\sqrt{\etaup}(q-1) }{b} \EEE[\tfrac{1}{\sqrt{\eta_{T}}}] +  \tfrac{1}{T} \ts \sum_{t=0}^{T-1} 2\EEE[\|\Q_t\|_{\fro}^2]\nn\\
	&\overset{\step{3}}{\leq}& \ts \tfrac{1}{T}  \cdot \tfrac{32L^2\dot{k}^2\hat{\eta}^2\sqrt{\etaup}(q-1) }{b} \EEE[\tfrac{1}{\sqrt{\eta_{T}}}]  +  \tfrac{1}{T} \ts (\tfrac{8C_Q}{\underline{\eta}})\nn\\
	&\overset{\step{4}}{\leq}& \ts \tfrac{1}{T} \cdot (\tfrac{32L^2\dot{k}^2\hat{\eta}^2\sqrt{\etaup}(q-1) }{b\sqrt{\underline{\eta}}} + \tfrac{8C_Q}{\underline{\eta}})   = \OO(\tfrac{1}{T} ),\label{uiuiui}
	\eeq
	where step \step{1} uses $\|\A - \X\A\trans\X\|_{\fro}^2 \leq 4\|\A\|_{\fro}^2,\forall \X \in \mathrm{St}(n,r)$; step \step{2} uses Inequality \eqref{acac}; step \step{3} uses Inequality \eqref{ss1}; step \step{4} uses Lemma \ref{prop:V} and Jensen's inequality for the concave function
	$\phi(x)=\sqrt{x}$, which gives	$
	\EEE\!\left[\tfrac{1}{\sqrt{\eta_T}}\right]
	=
	\EEE\!\left[\sqrt{\tfrac{1}{\eta_T}}\right]
	\leq
	\sqrt{\EEE\!\left[\tfrac{1}{\eta_T}\right]}
	\leq
	\tfrac{1}{\sqrt{\underline{\eta}}}
	$. Let $\tau$ is sampled uniformly from $\{1,\ldots,T\}$. By the Cauchy--Schwarz inequality,
\[
\begin{aligned}
	\EEE\left[
	\|\operatorname{grad}f(\X_{\tau})\|_{\fro}
	\right]
	&=
	\ts\tfrac{1}{T}\sum_{t=1}^{T}
	\EEE\left[
	\|\operatorname{grad}f(\X_t)\|_{\fro}
	\right] \leq
	\left(
	\ts\tfrac{1}{T}\sum_{t=1}^{T}
	\EEE\left[
	\ts \|\operatorname{grad}f(\X_t)\|_{\fro}^2
	\right]
	\right)^{1/2} \\
	&=
	\ts \OO\left(\tfrac{1}{\sqrt{T}}\right).
\end{aligned}
\]

	\noi \textbf{Part (b).} Let $b$ denote the mini-batch size and $q$ the update frequency for the {\sf RGR-SPIDER} algorithm. Suppose the method requires $T = \mathcal{O}(\varepsilon^{-2})$ total iterations to converge. During execution, the full-batch gradient $\nabla f(\mathbf{y}^t)$ is evaluated whenever $t \equiv 0 \pmod q$, which incurs a cost of $N$ and occurs exactly $\lceil T/q \rceil$ times. For the remaining $(T - \lceil T/q \rceil)$ iterations where $t \not\equiv 0 \pmod q$, only a stochastic mini-batch gradient is evaluated at a cost of $2b$. Consequently, the total computational complexity is:
	\begin{align*}
		\ts N \cdot \left\lceil \tfrac{T}{q} \right\rceil + 2b \cdot \left( T - \left\lceil \tfrac{T}{q} \right\rceil \right) 
		&\ts\le N \cdot \tfrac{T+q}{q} + 2b  T\ts\overset{\step{1}}{\le} N \cdot \tfrac{T+\lceil \sqrt N \rceil}{\lceil \sqrt N \rceil} + 2\lceil \sqrt N \rceil \cdot T \\
		&\ts\overset{\step{2}}{\leq} \sqrt{N} \cdot \mathcal{O}\left(\varepsilon^{-2}\right) + N + (2\sqrt{N}+1) \cdot \mathcal{O}\left(\varepsilon^{-2}\right)\\
        &\ts\overset{}{=}  \mathcal{O}(N + \sqrt{N}\varepsilon^{-2})
	\end{align*}
	where step \step{1} follows from the specific parameter choices $q = b = \lceil \sqrt N \rceil$, and step \step{2} substitutes the iteration bound $T = \mathcal{O}(\varepsilon^{-2})$. 
\end{proof}

\subsection{Proof of Lemma \ref{prop:C}}\label{app:prop:C}
\noi We first define $\YY_t \triangleq \EEE[f(\X_{t}) - f(\bar{\X})]$, $\CC_t \triangleq \EEE[ \|\M_t- \nabla f(\X_t)\|^2_{\fro}]$, and $\DD_t \triangleq \EEE[\|\M_t - \X_t \M_t \trans \X_t\|^2_{\fro}]$.
\begin{proof}
	We define $\Deltas_t \triangleq -\gamma_t (\M_t - \X_t \M_t \trans \X_t) $ and derive the following results almost surely:
	\beq
	&&f(\X_{t+1}) - f(\X_t) \nn\\
	&\leq&   \la \X_{t+1} - \X_t, \nabla f(\X_t) \ra + \tfrac{L}{2}\|\X_{t+1} - \X_t\|_{\fro}^2 \nn\\
	&\overset{\step{1}}{=}&  \la \Retr_{\X_t}(\Deltas_t) - \X_t, \nabla f(\X_t) \ra + \tfrac{L}{2}\|\Retr_{\X_t}(\Deltas_t) - \X_t\|_{\fro}^2\nn\\
	& = &  \la \Retr_{\X_t}(\Deltas_t) - \X_t - \Deltas_t, \nabla f(\X_t) \ra + \la  \Deltas_t, \nabla f(\X_t) \ra +  \tfrac{L}{2} \|\Retr_{\X_t} (\Deltas_t) - \X_t\|_{\fro}^2 \nn\\
	&\overset{\step{2}}{\leq}&  \la \Retr_{\X_t}(\Deltas_t) - \X_t - \Deltas_t, \nabla f(\X_t) \ra + \la  \Deltas_t, \nabla f(\X_t) \ra + \tfrac{\dot{k}^2L}{2}\|\Deltas_t\|_{\fro}^2  \nn\\
	&\overset{\step{3}}{\leq}&  \|\nabla f(\X_t)\|_{\fro} \|\Retr_{\X_t}(\Deltas_t) - \X_t - \Deltas_t\|_{\fro} + \la  \Deltas_t, \nabla f(\X_t) \ra + \tfrac{\dot{k}^2L}{2}\|\Deltas_t\|_{\fro}^2  \nn\\
	&\overset{\step{4}}{\leq}& \tfrac{C_f\ddot{k} + \dot{k}^2L}{2}\|\Deltas_t\|_{\fro}^2 + \la  \Deltas_t, \nabla f(\X_t) \ra,\label{eqa1}
	\eeq
	where step \step{1} uses the update rule $\X_{t+1} = \Retr_{\X_t} (\Deltas_t)$; step \step{2} uses Lemma \ref{lemma:M12} $(i)$; step \step{3} uses the fact that $\forall \A,\B, \la \A,\B\ra \leq \|\A\|_{\fro}\|\B\|_{\fro}$; step \step{4} uses Lemma \ref{lemma:M12} $(ii)$ and $f(\cdot)$ is $C_f$-Lipschitz continue.

	We now bound the term $\la  \Deltas_t, \nabla f(\X_t) \ra$. We obtain:
	\beq
	&& \la  \Deltas_t, \nabla f(\X_t) \ra \nn\\
	&=&  \la  \Deltas_t, \M_t \ra + \la  \Deltas_t, \nabla f(\X_t) -\M_t \ra  \nn\\
	&\overset{\step{1}}{=}&   -\gamma_t \cdot \la \M_t - \X_t \M_t \trans \X_t, \M_t \ra + \gamma_t \cdot \la\M_t - \X_t \M_t \trans \X_t, \M_t- \nabla f(\X_t)  \ra  \nn\\
	&\overset{\step{2}}{\leq}&   -\tfrac{\gamma_t}{2}  \|\M_t - \X_t \M_t \trans \X_t\|_{\fro}^2  + \gamma_t \cdot \la\M_t - \X_t \M_t \trans \X_t, \M_t- \nabla f(\X_t)  \ra  \nn\\
	&\overset{\step{3}}{\leq}&   - \tfrac{\gamma_t}{4} \|\M_t - \X_t \M_t \trans \X_t\|_{\fro}^2  +  \gamma_t \cdot  \|\M_t - \X_t \M_t \trans \X_t\|_{\fro} \cdot \|\M_t- \nabla f(\X_t)\|_{\fro} \nn\\
    &&- \tfrac{\gamma_t}{4}  \|\M_t - \X_t \M_t \trans \X_t\|_{\fro}^2\nn\\
	&\overset{\step{4}}{\leq}&   - \tfrac{\gamma_t}{4} \|\M_t - \X_t \M_t \trans \X_t\|_{\fro}^2  +   \gamma_t \|\M_t- \nabla f(\X_t)\|_{\fro}^2,\label{eqa2}
	\eeq
	where step \step{1} uses the fact that $\Deltas_t = -\gamma_t (\M_t - \X_t \M_t \trans \X_t) $; step \step{2} uses Lemma \ref{lemma:GD:bound}; step \step{3} uses the fact that $\forall \A,\B, \la \A,\B\ra \leq \|\A\|_{\fro}\|\B\|_{\fro}$; step \step{4} uses $\forall x, ax-\tfrac{1}{4}x^2\leq a^2$.

	Combining \eqref{eqa1}  and \eqref{eqa2}, we have almost surely:
	\beq
	&&f(\X_{t+1}) - f(\X_t) \nn\\
	&\overset{}{\leq}& \tfrac{C_f\ddot{k} + \dot{k}^2L}{2}\|\Deltas_t\|_{\fro}^2 - \tfrac{\gamma_t}{4} \|\M_t - \X_t \M_t \trans \X_t\|_{\fro}^2  +   \gamma_t \|\M_t- \nabla f(\X_t)\|_{\fro}^2 \nn\\
	&\overset{\step{1}}{=}& \tfrac{(C_f\ddot{k} + \dot{k}^2L)\gamma_t^2}{2}\|\M_t - \X_t \M_t \trans \X_t\|_{\fro}^2 - \tfrac{\gamma_t}{4} \|\M_t - \X_t \M_t \trans \X_t\|_{\fro}^2 +   \gamma_t \|\M_t- \nabla f(\X_t)\|_{\fro}^2 \nn\\
	&\overset{\step{2}}{\leq}& (\tfrac{C_f\ddot{k}\theta + \dot{k}^2L\theta}{2}) \cdot \gamma_t\|\M_t - \X_t \M_t \trans \X_t\|_{\fro}^2 - \tfrac{\gamma_t}{4} \|\M_t - \X_t \M_t \trans \X_t\|_{\fro}^2 +    \gamma_t \|\M_t- \nabla f(\X_t)\|_{\fro}^2 \nn\\
	&\overset{}{\leq}& (\underbrace{  \tfrac{C_f\ddot{k}\theta + \dot{k}^2L\theta - \tfrac{1}{2}}{2}}_{\triangleq c_2}) \cdot \gamma_t\|\M_t - \X_t \M_t \trans \X_t\|_{\fro}^2  +  \gamma_t  \|\M_t- \nabla f(\X_t)\|_{\fro}^2,\nn
	\eeq
	where step \step{1} uses the fact that $\Deltas_t = -\gamma_t (\M_t - \X_t \M_t \trans \X_t) $; step \step{2} uses $\gamma_t = \theta t^{-1/3}$ and $t^{-1/3} \leq 1, \forall t \geq 1$.	Finally taking the expectation of above inequality gives for $t\geq1$:
	\beq
	\YY_{t+1}  - \YY_t  \leq  c_2  \gamma_t \cdot \underbrace{\EEE[\|\M_t - \X_t \M_t \trans \X_t\|_{\fro}^2]}_{\DD_t}  +  \gamma_t \cdot  \underbrace{\EEE[ \|\M_t- \nabla f(\X_t)\|_{\fro}^2 ]}_{\CC_t}.\nn
	\eeq
\end{proof}

\subsection{Proof of Theorem \ref{prop:D}}\label{app:prop:D}

\noi We first define $\M_t\triangleq \nabla f(\X_t;\xi_t) + (1-\alpha_t) \left( \M_{t-1} - \nabla f(\X_{t-1};\xi_t) \right)$, $\H_t \triangleq \M_t- \nabla f(\X_t)$, $\CC_t \triangleq \EEE[ \|\M_t- \nabla f(\X_t)\|^2_{\fro}]$, and $\DD_t \triangleq \EEE[\|\M_t - \X_t \M_t \trans \X_t\|^2_{\fro}]$.

\begin{proof}
	We have the following results:
	\beq
	&&\H_t \nn\\ &=& \M_t- \nabla f(\X_t) \nn\\
	&=& (1-\alpha_{t}) (\M_{t-1} - \nabla f(\X_{t-1};\xi_t)) + \nabla f(\X_t;\xi_t) - \nabla f(\X_t)   \nn\\
	&=& (1-\alpha_{t}) (\M_{t-1} - \nabla f(\X_{t-1})) + (1-\alpha_{t}) ( \nabla f(\X_{t-1})- \nabla  f(\X_{t-1};\xi_t)) \nn\\
    && + \nabla f(\X_t;\xi_t) - \nabla f(\X_t)   \nn\\
	&\overset{\step{1}}{=} & \underbrace{(1-\alpha_{t}) ( \nabla f(\X_{t-1})- \nabla  f(\X_{t-1};\xi_t)) + \nabla f(\X_t;\xi_t) - \nabla f(\X_t)}_{\Y_t} \nn\\
    && +  (1-\alpha_{t})\H_{t-1}, \label{eq:gf2}
	\eeq
	where step \step{1} uses the definition of $\H_t$.   We now bound the term $\|\Y_t\|_{\fro}^2$ and give the following inequalities almost surely:
	\beq
	&& \|\Y_t\|_{\fro}^2\nn\\
	& = & \|(1-\alpha_{t}) ( \nabla f(\X_{t-1}) - \nabla  f(\X_{t-1};\xi_t)) + \nabla f(\X_t;\xi_t) - \nabla f(\X_t)\|_{\fro}^2 \nn\\
	& \overset{}{=} & \|\alpha_{t} ( \nabla  f(\X_{t};\xi_t) - \nabla f(\X_{t}) ) + (1-\alpha_{t}) (\nabla  f(\X_{t};\xi_t) - \nabla  f(\X_{t-1};\xi_t) \nn\\
    &&+  \nabla  f(\X_{t-1}) - \nabla  f(\X_t) ) \|_{\fro}^2 \nn\\ 
	& \overset{\step{1}}{\leq} &  3(1-\alpha_{t})^2 \left(\|\nabla  f(\X_{t};\xi_t) - \nabla  f(\X_{t-1};\xi_t)\|_{\fro}^2 + \| \nabla  f(\X_{t-1}) - \nabla  f(\X_t)\|_{\fro}^2 \right)\nn\\
    &&+ 3\alpha_{t}^2\|  \nabla f(\X_{t}) - \nabla  f(\X_{t};\xi_t)\|_{\fro}^2 \nn\\
	& \overset{\step{2}}{\leq} &  6L^2 \|\X_t - \X_{t-1}\|_{\fro}^2 + 3\alpha_{t}^2\| \nabla f(\X_{t}) - \nabla  f(\X_{t};\xi_t)\|_{\fro}^2,\label{eq:zz}
	\eeq
	where step \step{1} uses $\|\A + \B + \C\|_{\fro}^2 \leq 3\|\A\|_{\fro}^2 + 3\|\B\|_{\fro}^2 + 3\|\C\|_{\fro}^2$; step \step{2} uses \textbf{(b)} and \textbf{(c)} in Assumption \ref{ass:fx} and $(1-\alpha_{t})^2 \leq 1$.  Using Inequality \eqref{eq:gf2} and then taking the expectation gives:
	\beq
	\CC_t & \overset{}{=} &\EEE[\|(1-\alpha_{t})\H_{t-1} + \Y_t\|_{\fro}^2]\nn\\
	&\overset{\step{1}}{=} & \EEE[\| (1-\alpha_{t})\H_{t-1}\|_{\fro}^2 + \EEE[\|\Y_t \|_{\fro}^2] \nn\\
	&\overset{\step{2}}{\leq} & \EEE[\| (1-\alpha_{t})\H_{t-1}\|_{\fro}^2 + \EEE[ 6L^2 \|\X_t - \X_{t-1}\|_{\fro}^2 + 3\alpha_{t}^2\| ( \nabla f(\X_{t}) - \nabla  f(\X_{t};\xi_t))\|_{\fro}^2] \nn\\
	&\overset{\step{3}}{\leq} & \EEE[6L^2\| \Retr_{\X_{t-1}}(-\gamma_{t-1}(\M_{t-1} - \X_{t-1} \M_{t-1} \trans \X_{t-1})) - \X_{t-1} \|_{\fro}^2] \nn\\
	&& +   (1-\alpha_t)^2\CC_{t-1}  +3\alpha_t^2\sigma^2\nn\\	
	&\overset{\step{4}}{\leq}& (1-\alpha_t)^2\CC_{t-1}   +   6L^2\gamma_{t-1}^2\dot{k}^2\underbrace{\EEE[\|\M_{t-1} - \X_{t-1} \M_{t-1} \trans \X_{t-1}\|_{\fro}^2 ]}_{\DD_{t-1}} + 3\alpha_t^2\sigma^2\nn\\		
	&\overset{\step{5}}{\leq}&(1-\alpha_t)\CC_{t-1}   +  6L^2\gamma_{t-1}^2\dot{k}^2 \DD_{t-1} + 3 \alpha_t^2\sigma^2,\nn
	\eeq
	where step \step{1} uses $ \EEE[\nabla f(\X_t; \xi_t) - \nabla f(\X_t)] = 0 $ and $ \EEE[\nabla f(\X_{t-1}; \xi_t) - \nabla f(\X_{t-1})] = 0 $; step \step{2} uses \eqref{eq:zz}; step \step{3} uses $\X_t=\Retr_{\X_{t-1}}(-\gamma_{t-1}\R_{t-1})$, the definition of $\CC_t$, and Assumption \ref{ass:sigma}; step \step{4} uses Lemma \ref{lemma:M12} that $\|\Retr_{\X}(\Deltas)-\X\|_{\fro}\leq \dot{k} \|\Deltas\|_{\fro}$; step \step{5} uses $(1-\alpha_t) \leq 1$.
\end{proof}

\subsection{Proof of Theorem \ref{the:E}}\label{app:the:E}

\begin{proof}
	 We first define $\YY_t \triangleq \EEE[f(\X_{t}) - f(\bar{\X})]$ and $\CC_t \triangleq \EEE[\|\mathbf{M}_t - \nabla f(\mathbf{X}_t)\|_{\fro}^2]$. Then we have the following results from Lemma \ref{prop:D}:
	\begin{equation}
		\CC_{t+1}\leq (1-\alpha_{t+1}) \CC_t + \LLL_{t+1},\label{eq:S:recur}
	\end{equation}
	\noi where $\LLL_t\triangleq 6L^2\gamma_{t-1}^2\dot{k}^2 \DD_{t-1} + 3 \alpha_t^2\sigma^2$. Using Inequality (\ref{eq:S:recur}) and Lemma \ref{lemma:A5}, we derive:
	\beq
	\sqrt{\alpha_t} \CC_t \leq 4 \left( \tfrac{\CC_t}{\sqrt{\alpha_t}} - \tfrac{\CC_{t+1}}{\sqrt{\alpha_{t+1}}} + \tfrac{\LLL_{t+1}}{\sqrt{\alpha_{t+1}}}\right).\nn
	\eeq
	
	\noi Since $\gamma_t = \theta\sqrt{\alpha_{t}}$, we have:
	\beq
	\gamma_{t} \CC_t \leq  4\theta^2(\tfrac{\CC_t}{\gamma_{t}} - \tfrac{\CC_{t+1}}{\gamma_{t+1}} + \tfrac{\LLL_{t+1}}{\gamma_{t+1}}).\label{eq:gammas2t}
	\eeq

	\noi We have the following results from Lemma \ref{prop:C}:
	\beq
	&&\YY_{t+1} - \YY_t \leq c_2 \gamma_t \DD_t + \gamma_t \CC_t. \label{eq111}
	\eeq
	
	\noi Adding both sides by $\delta\gamma_t\CC_t$ in Inequality \eqref{eq111}, we have:
	\beq
	&&\YY_{t+1} - \YY_t  + \delta\gamma_t\CC_t\nn\\
	&\leq& c_2 \gamma_t \DD_t + (1+\delta) \gamma_t \CC_t\nn\\
	&\overset{\step{1}}{\leq}&  c_2 \gamma_t \DD_t + 4\theta^2 (1+\delta)(\tfrac{\CC_t}{\gamma_{t}} - \tfrac{\CC_{t+1}}{\gamma_{t+1}} + \tfrac{\LLL_{t+1}}{\gamma_{t+1}})\nn\\
	&\overset{\step{2}}{\leq}&  c_2 \gamma_t \DD_t +  4\theta^2 (1+\delta)(\tfrac{\CC_t}{\gamma_{t}} - \tfrac{\CC_{t+1}}{\gamma_{t+1}})+ \tfrac{4\theta^2 (1+\delta) (6L^2\gamma_t^2\dot{k}^2\DD_t + 3\alpha_{t+1}^2\sigma^2)}{\gamma_{t+1}}\nn\\
	&\overset{\step{3}}{\leq}&   -(\underbrace{-c_2 - 24 \cdot 2^{1/3}\theta^2 L^2\dot{k}^2(1+\delta)}_{\triangleq c_3})\gamma_t \DD_t + 4\theta^2 (1+\delta) (\tfrac{\CC_t}{\gamma_{t}} - \tfrac{\CC_{t+1}}{\gamma_{t+1}} )  + \tfrac{12\theta^2  \alpha_{t+1}^2\sigma^2(1+\delta)}{\gamma_{t+1}}\nn\\
	&\overset{\step{4}}{\leq}&   -c_3\gamma_t \DD_t + 4\theta^2 (1+\delta) (\tfrac{\CC_t}{\gamma_{t}} - \tfrac{\CC_{t+1}}{\gamma_{t+1}})  + 12\theta \sigma^2(1+\delta) t^{-1}, \nn
	\eeq
	where step \step{1} uses \eqref{eq:gammas2t};  step \step{2} uses the definition of $\LLL_t$; step \step{3} uses $\gamma_{t} \leq 2^{1/3}\gamma_{t+1}$;  step \step{4} uses the definitions of $\alpha_{t}$ and $\gamma_t$.

	Using the definition $\dot{\YY}_t \triangleq \YY_t + 4\theta^2 (1+\delta) \tfrac{\CC_t}{\gamma_{t}}$, we have:
	\beq
	\delta\gamma_t \CC_t + c_3 \gamma_t \DD_t  \leq \dot{\YY}_t  - \dot{\YY}_{t+1}   + 12\theta \sigma^2(1+\delta) (t+1)^{-1}.\nn
	\eeq

	\noi Summing the above inequality and using	$\underline{c}\triangleq\min\{\delta,c_3\}>0$, $t^{-1/3}\geq T^{-1/3}$ for $1\leq t\leq T$, and $\dot{\YY}_{T+1}\geq0$, we obtain
	\beq\label{erere}
	\ts \sum_{t=1}^T \theta\underline{c} \cdot t^{-1/3} \left( \CC_t +  \DD_t  \right)
	&\leq& \ts\sum_{t=1}^T \left( \dot{\YY}_t  - \dot{\YY}_{t+1}   + 12\theta \sigma^2(1+\delta) (t+1)^{-1}\right)\nn\\
	&\leq& \ts\dot{\YY}_1 + 12\theta \sigma^2(1+\delta) \log(T+1) \triangleq \Gamma\nn\\
	\Rightarrow \ts\sum_{t=1}^T  \left( \CC_t +  \DD_t  \right) &\leq& \tfrac{\Gamma}{\theta\underline{c}} \cdot T^{1/3} \nn\\
	&=&	\mathcal{O}\!\left(\log(T+1)\cdot T^{1/3}\right).
	\eeq
	
\noi Finally, we provide iteration complexity of {\sf RGR-SRM}.
\beq
&&\tfrac{1}{T}  \ts \sum_{t=1}^{T} \EEE[ \|\operatorname{grad}f(\X_t)\|_{\fro}^2] \nn\\
&\leq& \ts\tfrac{1}{T} \sum_{t=1}^{T}2\left(\EEE[\|\operatorname{grad}f(\X_t) - \R_t\|_{\fro}^2] + \EEE[\|\R_t\|_{\fro}^2] \right) \nn\\
&\overset{\step{1}}{\leq}& \ts\tfrac{1}{T} \sum_{t=1}^{T}(8\EEE[\|\nabla f(\X_t) - \M_t\|_{\fro}^2] + 2\EEE[\|\M_t - \X_t\M_t\trans\X_t\|_{\fro}^2]) \nn\\
&\leq&  \ts\tfrac{1}{T} \sum_{t=1}^{T}8(\CC_t + \DD_t) \nn\\
&\overset{\step{2}}{=} &\OO\left(\log (T+1)\cdot T^{-2/3}\right),\label{iiiiii}
\eeq
where step \step{1} uses $\|\A - \X\A\trans\X\|_{\fro}^2 \leq 4\|\A\|_{\fro}^2,\forall \X \in \mathrm{St}(n,r)$; step \step{2} uses \eqref{erere}.  
Let $\tau$ be uniformly distributed over $[T]$ and independent of the
algorithmic randomness. The Cauchy--Schwarz inequality then gives
\begin{align}
	\EEE\left[
	\|\operatorname{grad}f(\X_{\tau})\|_{\fro}
	\right]
	&\leq
	\left(
	\EEE\left[
	\|\operatorname{grad}f(\X_{\tau})\|_{\fro}^2
	\right]
	\right)^{1/2} \nn\\
	&=
	\left(
	\ts\tfrac{1}{T}\sum_{t=1}^{T}
	\EEE\left[
	\|\operatorname{grad}f(\X_t)\|_{\fro}^2
	\right]
	\right)^{1/2} \nn\\
	&=
	\OO\left(
	\sqrt{\log(T+1)}\,T^{-1/3}
	\right)
	=
	\tilde{\OO}(T^{-1/3}).
\end{align}
\end{proof}


\subsection{Proof of Lemma \ref{prop:F}}\label{app:prop:F}

\noi We first define $\ZZ_t \triangleq \EEE[f(\X_t)- f(\bar{\X})] $, $\EE_t \triangleq \EEE[ \|\N_t- \nabla f(\X_t)\|^2_{\fro}]$, and $\FF_t \triangleq \EEE[\|\N_t - \X_t \N_t \trans \X_t\|^2_{\fro}]$.

\begin{proof}
Using the same strategy as in Appendix~\ref{app:prop:C}, and replacing $\YY_{t+1}\YY_t$, $\gamma_t$, $\CC_t$, and $\DD_t$ with $\ZZ_{t+1}\ZZ_t$, $\mu_t$, $\EE_t$, and $\FF_t$, respectively, we obtain:
	\beq
	\ZZ_{t+1} - \ZZ_{t}  \leq \underbrace{(  \tfrac{C_f\ddot{k}\theta + \dot{k}^2L\theta - \tfrac{1}{2}}{2})}_{c_2} \cdot \mu_t \cdot \underbrace{\EEE[\|\N_t - \X_t \N_t \trans \X_t\|_{\fro}^2]}_{\FF_t}  +  \mu_t \cdot  \underbrace{\EEE[ \|\N_t- \nabla f(\X_t)\|_{\fro}^2 ]}_{\EE_t}.\nn
	\eeq
\end{proof}

\subsection{Proof of Lemma \ref{prop:G}}\label{app:prop:G}

\begin{proof}
	\noi First, we define $\K_t \triangleq \N_t- \nabla f(\X_t)$, $\N_t \triangleq \beta_t\nabla f(\X_t;\xi_t) + (1-\beta_t) \N_{t-1}$, $\EE_t \triangleq \EEE[ \|\N_t- \nabla f(\X_t)\|^2_{\fro}]$, and $\FF_t \triangleq \EEE[\|\N_t - \X_t \N_t \trans \X_t\|^2_{\fro}]$. Then we derive:
	\beq
	&&\K_t \nn\\ &=& \N_t- \nabla f(\X_t) \nn\\
	&=&  (1-\beta_t) \N_{t-1} + \beta_t\nabla f(\X_t;\xi_t)  - \nabla f(\X_t)   \nn\\
	&=&  (1-\beta_t) (\N_{t-1} - \nabla f(\X_{t-1})) + \beta_t\nabla f(\X_t;\xi_t) + (1-\beta_t)\nabla f(\X_{t-1})   - \nabla f(\X_t)   \nn\\
	&\overset{\step{1}}{=} &  \underbrace{(1-\beta_t)\K_{t-1} + (1-\beta_t)( \nabla f(\X_{t-1})- \nabla f(\X_t))}_{\Z_t} + \beta_t(\nabla f(\X_t;\xi_t) - \nabla f(\X_t)),  \nn\\\label{eq:kn}
	\eeq
    where step \step{1} uses the definition of $\K_t$. Then we have the following results:
    \beq
        &&\|\Z_t\|_{\fro}^2 \nn\\ &\triangleq &\|(1-\beta_t)\K_{t-1} + (1-\beta_t)( \nabla f(\X_{t-1})- \nabla f(\X_t))\|_{\fro}^2 \nn\\
        & \overset{\step{1}}{\leq}& (1 + \tau)\|(1 - \beta_t)\K_{t-1}\|_{\fro}^2 + \left(1 + \tfrac{1}{\tau}\right)\|(1 - \beta_t)(\nabla f(\X_{t-1}) - \nabla f(\X_t))\|_{\fro}^2,\ \forall \tau > 0 \nn\\
        & \overset{\step{2}}{=}& (1 + \beta_t)(1 - \beta_t)^2\|\K_{t-1}\|_{\fro}^2 + \left(1 + \tfrac{1}{\beta_t}\right)(1 - \beta_t)^2\|\nabla f(\X_{t-1}) - \nabla f(\X_t)\|_{\fro}^2 \nn\\
        & \overset{\step{3}}{\leq}& (1 - \beta_t)\|\K_{t-1}\|_{\fro}^2 + \left(1 + \tfrac{1}{\beta_t}\right)(1 - \beta_t)^2\|\nabla f(\X_t) - \nabla f(\X_{t-1})\|_{\fro}^2 \nn\\
        & \overset{\step{4}}{\leq}& (1 - \beta_t)\|\K_{t-1}\|_{\fro}^2 + \left(1 + \tfrac{1}{\beta_t}\right)(1 - \beta_t)^2L^2\|\X_t - \X_{t-1}\|_{\fro}^2 \nn\\
        & \overset{\step{5}}{\leq} &(1 - \beta_t)\|\K_{t-1}\|_{\fro}^2 + \tfrac{L^2}{\beta_t}\|\X_t - \X_{t-1}\|_{\fro}^2,\label{eq:ez}
    \eeq
    where step \step{1} uses \( \|a + b\|_{\fro}^2 \leq (1 + \tau)\|a\|_{\fro}^2 + (1 + \tfrac{1}{\tau})\|b\|_{\fro}^2 \) for all \( \tau > 0 \); step \step{2} uses the choice \( \tau = \beta_t \); step \step{3} uses \( (1 + \beta_t)(1 - \beta_t) \leq 1 \); step \step{4} uses \( \|\nabla f(\X_t) - \nabla f(\X_{t-1})\| \leq L\|\X_t - \X_{t-1}\| \); step \step{5} uses the following inequalities:
    \[
    \left(1 + \tfrac{1}{\beta_t}\right)(1 - \beta_t)^2 = \tfrac{1}{\beta_t}(\beta_t + 1)(1 - \beta_t)^2 = \tfrac{1}{\beta_t}(1 - \beta_t^2)(1 - \beta_t) \leq \tfrac{1}{\beta_t}.
    \]

	\noi  Taking the square of Equality \eqref{eq:kn} and then taking the expectation gives:
	\beq
	&& \EE_t \nn\\& \overset{}{=} &\EEE[\| \Z_t + \beta_t(\nabla f(\X_t;\xi_t) - \nabla f(\X_t))\|_{\fro}^2]\nn\\
	&\overset{\step{1}}{=} & \EEE[\|\Z_t \|_{\fro}^2] + \EEE[\| \beta_t(\nabla f(\X_t;\xi_t) - \nabla f(\X_t))\|_{\fro}^2]  \nn\\
	&\overset{\step{2}}{\leq} & \EEE[\|\Z_t \|_{\fro}^2] + \beta_t^2\sigma^2  \nn\\
	&\overset{\step{3}}{\leq} & (1 - \beta_t)\EE_{t-1} + \tfrac{L^2}{\beta_t} \EEE[\|\X_t - \X_{t-1}\|_{\fro}^2] + \beta_t^2\sigma^2  \nn\\
    &\overset{}{=} & (1 - \beta_t)\EE_{t-1}  + \beta_t^2\sigma^2  \nn\\
    &&+ \tfrac{L^2}{\beta_t}  \EEE[ \|\Retr_{\X_{t-1}}(-\mu_{t-1}(\N_{t-1} - \X_{t-1}\N_{t-1}\trans\X_{t-1})) - \X_{t-1}\|_{\fro}^2]\nn\\
	&\overset{\step{4}}{\leq}& (1 - \beta_t)\EE_{t-1}  +   \tfrac{L^2}{\beta_t}\mu_{t-1}^2\dot{k}^2\underbrace{\EEE[\|\N_{t-1} - \X_{t-1} \N_{t-1} \trans \X_{t-1}\|_{\fro}^2 ]}_{\FF_{t-1}} + \beta_t^2\sigma^2\nn\\		
	&\overset{}{=}& (1 - \beta_t)\EE_{t-1}   +  L^2\mu_{t-1}^2\dot{k}^2\tfrac{\FF_{t-1}}{\beta_t} + \beta_t^2\sigma^2,\nn
	\eeq
	\noi where step \step{1} uses $ \mathbb{E}[\nabla f(\X_t; \xi_t) - \nabla f(\X_t)] = 0 $; step \step{2} uses $\beta^t > 0$ and Assumption \ref{ass:sigma}; step \step{3} uses Inequality \eqref{eq:ez}; step \step{4} uses Lemma \ref{lemma:M12} that $\|\Retr_{\X}(\Deltas)-\X\|_{\fro}\leq \dot{k} \|\Deltas\|_{\fro}$.

\end{proof}

\subsection{Proof of Theorem \ref{the:H}}\label{app:the:H}

\begin{proof}
We define $\ZZ_t \triangleq \EEE[f(\X_t)- f(\bar{\X})] $ and $\EE_t =  \EEE[ \|\N_t- \nabla f(\X_t)\|^2_{\fro}]$ and have the following inequality from Lemma \ref{prop:G}:
    \begin{equation}
		\EE_{t+1}\leq (1-\beta_{t+1}) \EE_t + \KKK_{t+1},\label{eq:S:recur2}
	\end{equation}
	\noi where $\KKK_t\triangleq L^2\mu_{t-1}^2\dot{k}^2\tfrac{\FF_{t-1}}{\beta_t} + \beta_t^2\sigma^2$. Using Equation (\ref{eq:S:recur2}) and Lemma \ref{lemma:A3}, we derive:
	\beq
	\beta_t \EE_t &\leq& 2 \left( \EE_t - \EE_{t+1} + \KKK_{t+1}\right)\nn\\
    &\overset{\step{1}}{=}& 2 \left( \EE_t - \EE_{t+1} \right) + 2L^2\mu_t^2\dot{k}^2\tfrac{\FF_t}{\beta_{t+1}} + 2\beta_{t+1}^2\sigma^2\nn\\
    &\overset{\step{2}}{=}& 2 \left( \EE_t - \EE_{t+1} \right) + \tfrac{(t+1)^{1/2}}{t^{1/2}} \cdot \tfrac{2\theta^2L^2\dot{k}^2\FF_t}{t^{1/2}} + \tfrac{2\sigma^2}{t+1}\nn\\
    &\overset{\step{3}}{\leq}& 2 \left( \EE_t - \EE_{t+1} \right) +  \tfrac{4\theta^2L^2\dot{k}^2\FF_t}{t^{1/2}} + \tfrac{2\sigma^2}{t+1},\label{eq:betaet}
	\eeq
	where step \step{1} uses the definition of $\KKK_t$; step \step{2} uses $\beta_t = t^{-1/2}$ and $\mu_t = \theta t^{-1/2}$; step \step{3} uses $\tfrac{(t+1)^{1/2}}{t^{1/2}}  \leq 2$.

	Then, we have the following results from Lemma \ref{prop:F}:
	\beq
	\ZZ_{t+1} - \ZZ_t \leq c_2 \mu_t \FF_t + \mu_t  \EE_t. \nn
	\eeq
	
    \noi For any \(\delta>0\), adding \(\delta\mu_t\EE_t\) to both sides of the inequality yields:
	\beq
	&&\ZZ_{t+1} - \ZZ_t + \delta\mu_t\EE_t\nn\\
	&\leq& c_2 \mu_t\FF_t + (1+\delta) \mu_t\EE_t\nn\\
	&\overset{\step{1}}{\leq}&  c_2 \mu_t \FF_t +  2\theta(1+\delta)\left(\left( \EE_t - \EE_{t+1} \right) +  \tfrac{2\theta^2L^2\dot{k}^2\FF_t}{t^{1/2}} + \tfrac{\sigma^2}{t+1}\right)\nn\\
	&\overset{}{=}&   -(\underbrace{- c_2 - 4 \theta^2 L^2\dot{k}^2(1+\delta)}_{\triangleq c_4})\mu^t \FF_t + 2\theta(1+\delta)\left( \EE_t - \EE_{t+1} \right)  + \tfrac{2\theta(1+\delta)\sigma^2}{t+1}\nn\\
	&\overset{}{=}&   -c_4\mu^t \FF_t + 2\theta(1+\delta)\left( \EE_t - \EE_{t+1} \right)  + \tfrac{2\theta(1+\delta)\sigma^2}{t+1}, \nn
	\eeq
	where step \step{1} uses \eqref{eq:betaet}. Using the definition $\dot{\ZZ}_t \triangleq \ZZ_t + 2\theta(1+\delta)\EE_t$, we have:
	\beq
	\delta \mu_t\EE_t + c_4 \mu_t \FF_t  \leq \dot{\ZZ}_t  - \dot{\ZZ}_{t+1}  + \tfrac{2\theta(1+\delta)\sigma^2}{t+1}.\nn
	\eeq

	\noi  Summing the preceding inequality from \(t=1\) to \(T\), and using \(\underline{c}'\triangleq\min\{\delta,c_4\}\), \(t^{-1/2}\geq T^{-1/2}\), and \(\dot{\ZZ}_{T+1}\geq0\), we obtain:
	
	\beq \label{qqqqq}
	\ts \sum_{t=1}^T \theta \underline{c}' \cdot t^{-1/2} \left( \EE_t +  \FF_t  \right)
	&\leq& \ts\sum_{t=1}^T \left( \dot{\ZZ}_t  - \dot{\ZZ}_{t+1}   + 2\theta(1+\delta) \sigma^2 \cdot (t+1)^{-1}\right)\nn\\
	&\leq& \dot{\ZZ}_1 + 2\theta(1+\delta)\sigma^2\cdot\log(T+1) \triangleq \Gamma'\nn\\
	\Rightarrow \ts\sum_{t=1}^T  \left( \EE_t +  \FF_t  \right) &\leq& \tfrac{\Gamma'}{\theta\underline{c}'} \cdot T^{1/2}\nn\\
	&=&\mathcal{O}\!\left(\log(T+1)\cdot T^{1/2}\right).
	\eeq
    \noi We give the following results of iteration complexity.
    \beq
    &&\tfrac{1}{T}  \ts \sum_{t=1}^{T}  \EEE[\|\operatorname{grad}f(\X_t)\|_{\fro}^2] \nn\\
    &\leq& \ts\tfrac{1}{T} \sum_{t=1}^{T}(2\EEE[\|\operatorname{grad}f(\X_t) - \S_t\|_{\fro}^2] + 2\EEE[\|\S_t\|_{\fro}^2]) \nn\\
    &\overset{\step{1}}{\leq}&  \ts\tfrac{1}{T} \sum_{t=1}^{T}(8\EEE[\|\N_t - \nabla f(\X_t)\|_{\fro}^2] + 2\EEE[\|\N_t - \X_t\N_t\trans\X_t\|_{\fro}^2]) \nn\\
    &\leq&  \ts\tfrac{1}{T} \sum_{t=1}^{T}8(\EE_t + \FF_t) \nn\\
    &\overset{\step{2}}{=} &\OO\left(\log (T+1)\cdot T^{-1/2}\right),\label{111111111111}
    \eeq
	where step \step{1} uses $\|\A - \X\A\trans\X\|_{\fro}^2 \leq 4\|\A\|_{\fro}^2,\forall \X \in \mathrm{St}(n,r)$; step \step{2} uses \eqref{qqqqq}. 
	Let $\tau\sim\operatorname{Unif}[T]$ and independent of the
	algorithmic randomness. Finally, the Cauchy--Schwarz inequality yields
	\begin{align}
		\EEE\left[
		\|\operatorname{grad}f(\X_{\tau})\|_{\fro}
		\right]
		&\leq
		\left(
		\EEE\left[
		\|\operatorname{grad}f(\X_{\tau})\|_{\fro}^2
		\right]
		\right)^{1/2} \nn\\
		&=
		\left(
		\ts\tfrac{1}{T}\sum_{t=1}^{T}
		\EEE\left[
		\|\operatorname{grad}f(\X_t)\|_{\fro}^2
		\right]
		\right)^{1/2} \nn\\
		&=
		\OO\left(
		\sqrt{\log(T+1)}\,T^{-1/4}
		\right)
		=
		\tilde{\OO}(T^{-1/4}).
	\end{align}
\end{proof}


%
%

\bibliographystyle{plain}

\bibliography{mybib}

\end{document}